\documentclass[twoside,a4paper,12pt,reqno]{amsart}
\usepackage{fullpage}
\usepackage{amsfonts}
\usepackage{amssymb}
\usepackage{amsmath}
\usepackage{mathrsfs}
\usepackage{enumitem}
\usepackage{tikz}
\usepackage{float}
\usepackage{subcaption}
\usepackage{cite}
\makeatletter
\newcommand{\citecomment}[2][]{\citen{#2}#1\citevar}
\newcommand{\citeone}[1]{\citecomment{#1}}
\newcommand{\citetwo}[2][]{\citecomment[,~#1]{#2}}
\newcommand{\citevar}{\@ifnextchar\bgroup{;~\citeone}{\@ifnextchar[{;~\citetwo}{]}}}
\newcommand{\citefirst}{\@ifnextchar\bgroup{\citeone}{\@ifnextchar[{\citetwo}{]}}}
\newcommand{\cites}{[\citefirst}
\makeatother
\setlength{\parskip}{0em}
\setlength{\parindent}{1em}

\pagestyle{plain}
\theoremstyle{plain}
\newtheorem*{claim*}{Claim}
\newtheorem{thm}{Theorem}[section]
\newtheorem{cor}[thm]{Corollary}
\newtheorem{lem}[thm]{Lemma}
\newtheorem{prop}[thm]{Proposition}
\theoremstyle{definition}
\newtheorem{defn}[thm]{Definition}
\newtheorem{ex}[thm]{Example}

\newtheorem{con}[thm]{Construction}

\renewcommand{\l}{\mbox{${\mathcal{L}}$}}
\renewcommand{\r}{\mbox{${\mathcal{R}}$}}
\newcommand{\h}{\mbox{${\mathcal{H}}$}}
\renewcommand{\d}{\mbox{${\mathcal{D}}$}}
\renewcommand{\j}{\mbox{${\mathcal{J}}$}}
\renewcommand{\k}{\mbox{${\mathcal{K}}$}}
\newcommand{\leql}{\leq_{\mathcal{L}}}
\newcommand{\geql}{\geq_{\mathcal{L}}}
\newcommand{\leqr}{\leq_{\mathcal{R}}}
\newcommand{\geqr}{\geq_{\mathcal{R}}}
\newcommand{\leqh}{\leq_{\mathcal{H}}}
\newcommand{\geqh}{\geq_{\mathcal{H}}}
\newcommand{\leqj}{\leq_{\mathcal{J}}}
\newcommand{\geqj}{\geq_{\mathcal{J}}}
\newcommand{\leqk}{\leq_{\mathcal{K}}}

\newcommand{\gl}{>_{\mathcal{L}}}
\newcommand{\lr}{<_{\mathcal{R}}}
\newcommand{\gr}{>_{\mathcal{R}}}
\newcommand{\lh}{<_{\mathcal{H}}}
\newcommand{\gh}{>_{\mathcal{H}}}

\newcommand{\gj}{>_{\mathcal{J}}}
\newcommand{\lk}{<_{\mathcal{K}}}
\newcommand{\gk}{>_{\mathcal{K}}}
\newcommand{\HL}{H_{\mathcal{L}}}
\newcommand{\HR}{H_{\mathcal{R}}}
\newcommand{\HH}{H_{\mathcal{H}}}
\newcommand{\HJ}{H_{\mathcal{J}}}
\newcommand{\HK}{H_{\mathcal{K}}}
\newcommand{\HKS}{H_{\mathcal{K}}^S}
\newcommand{\HRS}{H_{\mathcal{R}}^S}
\newcommand{\HJS}{H_{\mathcal{J}}^S}
\newcommand{\N}{\mathbb{N}}

\newcommand{\U}{\mathcal{U}}

\begin{document}
\subjclass[2020]{20M10, 20M12}
\title{\large{Heights of posets associated with Green's relations on semigroups}}
\author{Matthew Brookes and Craig Miller}
\address{Department of Mathematics, University of York, UK, YO10 5DD}
\email{craig.miller@york.ac.uk}
\address{School of Mathematics and Statistics, St Andrews, Scotland, UK, KY16 9SS}
\email{mdgkb1@st-andrews.ac.uk}
\maketitle
\begin{abstract}
Given a semigroup $S$, for each Green's relation $\k\in\{\l,\r,\j,\h\}$ on $S,$ the $\k$-height of $S,$ denoted by $\HK(S),$ is the height of the poset of $\k$-classes of $S.$  More precisely, if there is a finite bound on the sizes of chains of $\k$-classes of $S,$ then $\HK(S)$ is defined as the maximum size of such a chain; otherwise, we say that $S$ has infinite $\k$-height.  We discuss the relationships between these four $\k$-heights.  The main results concern the class of stable semigroups, which includes all finite semigroups.  In particular, we prove that a stable semigroup has finite $\l$-height if and only if it has finite $\r$-height if and only if it has finite $\j$-height.  In fact, for a stable semigroup $S,$ if $\HL(S)=n$ then $\HR(S)\leq2^n-1$ and $\HJ(S)\leq2^n-1,$ and we exhibit a family of examples to prove that these bounds are sharp.  Furthermore, we prove that if $2\leq\HL(S)<\infty$ and $2\leq\HR(S)<\infty,$ then $\HJ(S)\leq\HL(S)+\HR(S)-2.$  We also show that for each $n\in\N$ there exists a semigroup $S$ such that $\HL(S)=\HR(S)=2^n+n-3$ and $\HJ(S)=2^{n+1}-4.$  By way of contrast, we prove that for a regular semigroup the $\l$-, $\r$- and $\h$-heights coincide with each other, and are greater or equal to the $\j$-height.  Moreover, in a stable, regular semigroup the $\l$-, $\r$-, $\h$- and $\j$-heights are all equal.
\end{abstract}
~\\
\textit{Keywords}: Semigroup, Green's relations, poset, height.\\
\textit{Mathematics Subject Classification 2020}: 20M10, 20M12.
\maketitle

\section{Introduction}\label{sec:intro}

Green's relations, five equivalence relations based on mutual divisibility, are arguably the most important tools for analysing the structure of semigroups.  For four of these relations, namely $\l,\r,\j$ and $\h$, there is a natural associated poset and thus a height parameter.  Specifically, Green's preorders $\leql,$ $\leqr,$ $\leqj$ and $\leqh$ are defined as follows:
\begin{enumerate}[label=$\bullet$, leftmargin=*]
\item $a\leql b$ if and only if there exists $s\in S^1$ such that $a=sb$;
\item $a\leqr b$ if and only if there exists $s\in S^1$ such that $a=bs$;
\item $a\leqj b$ if and only if there exist $s,t\in S^1$ such that $a=sbt$;
\item $a\leqh b$ if and only if there exist $s,t\in S^1$ such that $a=sb=bt.$
\end{enumerate}
(Throughout this paper, $S^1$ stands for the monoid obtained from $S$ by adjoining an identity $1\notin S.$)
Observe that $\leqh\,=\,\leql\cap\leqr,$ $\leql\,\subseteq\,\leqj$ and $\leqr\,\subseteq\,\leqj$.
Now, letting $\k$ stand for any of $\l,\r,\j$ and $\h$, Green's relation $\k$ is defined by
$$a\,\k\,b\Leftrightarrow a\leqk b\text{ and }b\leqk a.$$
The preorder $\leqk$ induces a partial order on the set of $\k$-classes: 
$$\hspace{12em}K_a\leq K_b\,\Leftrightarrow\,a\leqk b\qquad(\text{where $K_s$ is the $\k$-class of $s\in S$}).$$
\begin{defn}
The {\em $\k$-height} of $S,$ denoted by $\HK(S),$ is defined as follows.  If there is a finite bound on the sizes of chains of $\k$-classes of $S,$ then $\HK(S)$ is defined as the minimum such bound; otherwise, we say that $S$ has {\em infinite $\k$-height} and write $\HK(S)=\infty$.
\end{defn}
The $\l$-, $\r$- and $\j$-heights were first explicitly defined and studied in \cite{Fleischer:2017}, in the context of finite transformation semigroups.  However, these parameters play an implicit role in the Rhodes expansion of a semigroup, a powerful tool in complexity theory, and in variants of this construction; see \cites{Birget}[Chapter XII]{Eilenberg}.  Moreover, for $\k\in\{\l,\r,\j,\h\},$ having finite $\k$-height is clearly stronger than satisfying the minimal condition on $\k$-classes.  The minimal conditions on $\l$-classes, $\r$-classes and $\j$-classes, denoted by $M_L$, $M_R$ and $M_J$, as well as the related conditions $M_L^*$ and $M_R^*$, have played an important role in the development of the structure theory of semigroups; see \cites[Section 6.6]{Clifford:1967}[p.\! 23-29]{Higgins}{Green}{Munn}

The article \cite{Miller} investigates the relationship between the $\r$-heights of semigroups and their bi-ideals (which include left-, right- and two-sided ideals).  In particular, it is shown that the property of having finite $\r$-height is inherited by bi-ideals.  Of course, the results of that paper have obvious duals in terms of $\l$-heights.  In the final section of \cite{Miller} there is a brief discussion about the relationships between the $\l$-, $\r$-, $\j$- and $\h$-heights.  This is the subject of the present article.


In the following section we provide some preliminary definitions and results regarding minimality in the posets of $\l$-, $\r$-, $\j$- and $\h$-classes.  In Section \ref{sec:stability} we consider the notion of stability and its relationship to the $\k$-heights, and in Section \ref{sec:small} we discuss semigroups with $\k$-heights equal to $1$ or $2$.  In order to prove the main results of the paper, in Section \ref{sec:quotients} we develop some machinery concerning quotients and ideal extensions.  The main results, contained in Section \ref{sec:main}, include the following, where $S$ is a semigroup.
\begin{itemize}[leftmargin=*]
\item If $\HL(S)=n<\infty$, then $S$ is stable if and only if $\HR(S)<\infty,$ in which case 
\mbox{$\lceil\log_2(n+1)\rceil\leq\HR(S)\leq2^n-1.$}
\item If $S$ is stable, then $\HL(S)$ is finite if only if $\HJ(S)$ is finite.  Moreover, if $\HL(S)=n<\infty$ then $n\leq\HJ(S)\leq2^n-1.$
\item For every $n\in\N$ and $m\in\{n,\dots,2^n-1\},$ there exists a semigroup $T$ such that $\HL(T)=n$ and $\HR(T)=\HJ(T)=|T|=m.$
\item If $2\leq\HL(S)<\infty$ and $2\leq\HR(S)<\infty$ then, with $\min\big(\HL(S),\HR(S)\big)=n,$ we have
$$\max\big(\HL(S),\HR(S)\big)\leq\HJ(S)\leq\min(2^n-1,\HL(S)+\HR(S)-2).$$
\end{itemize}
Finally, in Section \ref{sec:ss} we consider the relationships between the $\k$-heights for semisimple semigroups and, in particular, for regular semigroups. 

\section{Preliminaries}\label{sec:prelim}

Throughout, $S$ denotes an arbitrary semigroup, and $\k$ stands for any of Green's relations $\l,\r,\j$ and $\h.$  Recall that each of Green's relations is an equivalence relation.  Furthermore, $\l$ is a right congruence, and $\r$ is a left congruence (a {\em left/right congruence} on $S$ is an equivalence relation on $S$ that is preserved under left/right multiplication).

For two elements $a,b\in S,$ we write $a\lk b$ if $a\leqk b$ but $a$ and $b$ are not $\k$-related.
We define a {\em $\k$-chain} in $S$ to be a sequence of elements of $S$ that is strictly decreasing under the preorder $\leqk$.  Note that $S$ has finite $\k$-height if and only if there is a finite bound on the lengths of $\k$-chains in $S.$  Thus, $S$ may have infinite $\k$-height even if all its chains of $\k$-classes are finite.


Notice that a semigroup with finite $\k$-height has minimal and maximal $\k$-classes.  A semigroup can have at most one minimal $\j$-class; if this $\j$-class exists, it is called the {\em minimal ideal}.  On the other hand, a semigroup may possess multiple minimal $\l$/$\r$-classes (also known as minimal left/right ideals).  If a semigroup has minimal $\l$/$\r$-classes, then it has a minimal ideal, which is equal to the union of all the minimal $\l$/$\r$-classes \cite[Theorem 2.1]{Clifford:1948}.

A semigroup is {\em left/right simple} if it has a single $\l/\r$-class, and {\em simple} if it has a single $\j$-class.  Note that a semigroup is simple if and only if it has $\j$-height 1.  Certainly left/right simple semigroups are simple.  It turns out that minimal $\l$/$\r$-classes are left/right simple subsemigroups \cite[Theorem 2.4]{Clifford:1948}, and minimal ideals are simple subsemigroups \cite[Theorem 1.1]{Clifford:1948}.  A {\em completely simple} semigroup is a simple semigroup that possesses both minimal $\l$-classes and minimal $\r$-classes.  

From the preceding discussion, we immediately deduce the following lemma. 

\begin{lem}\label{lem:minideal}
For $\k\in\{\l,\r,\j\}$, if $\HK(S)$ is finite then $S$ has a minimal ideal, which is the union of the minimal $\k$-classes of $S.$
\end{lem}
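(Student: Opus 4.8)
The plan is to leverage directly the facts quoted in the preceding paragraph, namely Clifford's results \cite[Theorems 1.1 and 2.1]{Clifford:1948}, so that the only real work is to convert the hypothesis of finite $\k$-height into the existence of minimal $\k$-classes. First I would handle the case $\k=\j$. If $\HJ(S)=n<\infty$, then the poset of $\j$-classes has finite height $n$, so every chain of $\j$-classes has length at most $n$; hence, starting from any $\j$-class and descending, we reach a minimal $\j$-class in at most $n$ steps. A minimal $\j$-class is a minimal ideal (as noted in the excerpt, a semigroup has at most one minimal $\j$-class, and if it exists it is the minimal ideal), so $S$ has a minimal ideal $M$. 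It remains to observe that $M$ is trivially the union of the minimal $\j$-classes of $S$, since $M$ itself is the unique minimal $\j$-class.

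Next I would treat the cases $\k\in\{\l,\r\}$; by left–right duality it suffices to do $\k=\r$. Suppose $\HR(S)=n<\infty$. As above, descending through $\r$-classes, every element lies above (in $\leqr$) some element whose $\r$-class is minimal, so $S$ possesses minimal $\r$-classes (equivalently, minimal right ideals). By \cite[Theorem 2.1]{Clifford:1948}, quoted in the excerpt, a semigroup that has minimal $\r$-classes has a minimal ideal, and that minimal ideal equals the union of all the minimal $\r$-classes. This gives exactly the conclusion of the lemma for $\k=\r$, and dually for $\k=\l$.

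I do not anticipate a genuine obstacle here: the lemma is essentially a repackaging of Clifford's classical results together with the elementary observation that a finite bound on chain lengths forces the existence of minimal elements in the poset. The only point requiring a small amount of care is the phrase ``which is the union of the minimal $\k$-classes of $S$'': for $\k=\j$ this is a triviality (the minimal ideal \emph{is} the unique minimal $\j$-class), whereas for $\k\in\{\l,\r\}$ it is precisely the content of \cite[Theorem 2.1]{Clifford:1948}. So the write-up would be just a couple of lines, split into the $\j$-case and the $\l/\r$-case, with an appeal to duality to cover $\l$ once $\r$ is done.
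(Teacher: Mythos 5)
Your proposal is correct and follows essentially the same route as the paper, which derives the lemma "immediately" from the preceding discussion: finite $\k$-height forces the existence of minimal $\k$-classes, the unique minimal $\j$-class is the minimal ideal, and for $\k\in\{\l,\r\}$ one invokes \cite[Theorem 2.1]{Clifford:1948} to identify the minimal ideal with the union of the minimal $\l$/$\r$-classes. Nothing is missing.
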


In fact, the statement of Lemma~\ref{lem:minideal} also holds for $\k=\h$, as a consequence of the following stronger result.

\begin{lem}\label{lem:CSminideal}
A semigroup has minimal $\h$-classes if and only if it has a completely simple minimal ideal (which is the union of all the minimal $\h$-classes).  Consequently, a semigroup with finite $\h$-height has a completely simple minimal ideal.
\end{lem}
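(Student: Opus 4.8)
The plan is to prove the two directions of the biconditional separately, and then observe that the final sentence follows because finite $\h$-height forces the existence of minimal $\h$-classes. For the forward direction, suppose $S$ has a minimal $\h$-class. Since $\leqh\,=\,\leql\cap\leqr$, an $\h$-class that is minimal in the $\h$-order need not immediately be a minimal $\l$-class or $\r$-class, so the first task is to upgrade minimality: I would show that if $H$ is a minimal $\h$-class then every element $a\in H$ lies in a minimal $\l$-class and in a minimal $\r$-class. To see this, take $a\in H$ and any $b\leql a$; then $ba\leqh a$ would need $a\leqh ba$... the cleaner route is to pick $b\leql a$ and note $b\leqr a$ fails in general, so instead I would argue: given $b\ll a$, consider an element $c$ with $c\leqh a$ and $c\leql b$ (e.g. $c=be$ for a suitable... ) — actually the slick argument is that $H_a$ minimal means $a\leqh x$ whenever $x\leqh a$; combining with $b\leql a$, take $ab$... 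Let me instead use the standard fact: if $x\ll a$ then $xa\lh a$ unless... The key lemma I will actually invoke is Lemma~\ref{lem:minideal}-style reasoning together with the observation that in $S^1aS^1$ the element $a$ generates, and minimality of $H_a$ propagates downward. Concretely: if $a$ does not lie in a minimal $\l$-class, pick $b\ll a$; then $ba\leql a$ and $ba\leqr a$ (since $ba\in Sa\subseteq aS^1$? no).

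The correct and clean approach: I will show directly that $S$ has a minimal $\l$-class. Suppose not; then there is an infinite $\l$-chain $a_1\gl a_2\gl\cdots$, and I will derive an infinite $\h$-chain, contradicting the existence of a minimal $\h$-class — but this needs finite $\h$-height, not just minimal $\h$-classes, so this only proves the "Consequently" clause, not the biconditional. For the genuine biconditional I will follow the classical structure-theory argument: assuming $S$ has a minimal $\h$-class $H$, I first show $S$ has a minimal ideal $M$ (every $x$ in $H$ satisfies that $S^1xS^1$ contains, among its $\h$-classes, $H$ as a minimal one, and one shows $M=\bigcup\{H' : H'\text{ minimal }\h\text{-class}\}$ is an ideal that is contained in every ideal); then I show $M$ is simple (by Lemma~\ref{lem:minideal}-type reasoning or \cite[Theorem 1.1]{Clifford:1948}, a minimal ideal is always simple); then I show $M$ possesses minimal $\l$-classes and minimal $\r$-classes. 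The last point is where the hypothesis is really used: if $H\subseteq M$ is a minimal $\h$-class of $S$ and $b\leql a$ in $M$ with $a\in H$, then since $a\,\j\,b$ (both in the simple semigroup $M$) we get $a\leqr b'$ for some $b'\,\l\,b$, whence $a\,\h\,$ something forces $b\,\l\,a$ after chasing; more carefully, minimality of $H_a$ among $\h$-classes of $S$, pushed through the equalities $\leqh=\leql\cap\leqr$, yields that the $\l$-class of $a$ is minimal, and dually for $\r$. Then $M$ is simple with minimal $\l$- and $\r$-classes, i.e.\ completely simple. Conversely, if $S$ has a completely simple minimal ideal $M$, then $M$ is a union of groups (each $\h$-class of $M$ is a group) by the Rees–Suschkewitsch structure theorem, and any $x\in M$ satisfies: for $y\leqh x$ in $S$ we have $y\in S^1xS^1\subseteq M$ (as $M$ is an ideal), and within completely simple $M$, $y\leqh x$ forces $y\,\h\,x$; hence $H_x$ is a minimal $\h$-class of $S$, and conversely every minimal $\h$-class of $S$ lies in $M$ and meets it, giving $M=\bigcup\{$minimal $\h$-classes$\}$.

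Finally, the "Consequently" clause: if $\HH(S)<\infty$ then every $\h$-chain has bounded length, so $S$ has a minimal $\h$-class (take a minimal element of the finite poset, or the bottom of a maximal-length $\h$-chain), and the biconditional just proved gives a completely simple minimal ideal.

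\textbf{Main obstacle.} The delicate step is showing that in the minimal ideal $M$, a minimal $\h$-class of $S$ forces minimal $\l$- and $\r$-classes. The subtlety is that $\leqh=\leql\cap\leqr$ does not by itself let one separate the two coordinates: an $\h$-class can fail to be $\l$-minimal while still being $\h$-minimal, \emph{unless} one already knows something like stability or simplicity. So the argument must genuinely use that $M$ is simple (equivalently, that all elements of $M$ are $\j$-equivalent), converting $\leql$-comparability into $\h$-comparability via the fact that in a simple semigroup $a\,\j\,b$ together with $a\leql b$ and a Green's-lemma translation gives $\l$-equivalence. I expect to spend most of the proof making this translation precise, probably by citing the relevant parts of \cite{Clifford:1948} for the simplicity and union-of-minimal-classes facts and doing the $\l$/$\r$-minimality upgrade by hand.
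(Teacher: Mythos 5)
There is a genuine gap at exactly the point you flag as your ``main obstacle,'' and your proposed fix does not close it. You want to upgrade a minimal $\h$-class $H_a$ inside the (simple) minimal ideal $M$ to a minimal $\l$-class and a minimal $\r$-class, and the mechanism you offer is that in a simple semigroup $a\,\j\,b$ together with $b\leql a$ yields $b\,\l\,a$. That implication is precisely left stability, and it fails for simple semigroups in general: the bicyclic monoid is simple, yet its idempotents form an infinite strictly descending $\l$-chain, so $\j$-equivalence plus $\leql$-comparability does not give $\l$-equivalence. Simplicity of $M$ alone cannot do this work; the minimality of the $\h$-class has to be used a second time, and you never say how. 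Separately, your earlier step --- that the union $M$ of the minimal $\h$-classes is an ideal contained in every ideal --- is asserted (``one shows\dots'') rather than proved; the computation that makes it go is the observation that $xsxtx\leqh x$ for all $s,t\in S^1$ (it is simultaneously a left and a right multiple of $x$), so minimality of $H_x$ forces $x\,\h\,xsxtx$ and hence $x\,\j\,sxt$, showing $S^1xS^1$ is a single $\j$-class.

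The paper closes the gap you struggled with by a different and much shorter mechanism that your plan never touches: minimality gives $x\,\h\,x^2$, so by Green's Theorem each minimal $\h$-class is a group; $M$ is therefore a simple semigroup that is a union of groups, hence completely simple by \cite[Theorem 4.5]{Clifford:1961}. (An equivalent repair in the spirit of your plan: the identity $e$ of the group $H_x$ is a primitive idempotent, since $f\leq e$ implies $f\leqh e$, whence $f\,\h\,e$ and $f=e$; a simple semigroup with a primitive idempotent is completely simple.) Your converse direction and the deduction of the final clause from the biconditional are essentially the paper's and are fine, modulo quoting that a completely simple minimal ideal is the union of the minimal $\l$-classes and of the minimal $\r$-classes of $S$.
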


\begin{proof}
This lemma is probably well known, but we provide a proof for completeness.

Suppose that $S$ is a semigroup with minimal $\h$-classes, and let $J$ denote the union of all the minimal $\h$-classes.  For any $x\in J$ and $s,t\in S^1$, we have $x\geqh xsxtx,$ so that $x\,\h\,xsxtx$ by minimality, which implies that $x\,\j\,sxt.$  Thus $J$ is the minimal ideal of $S,$ and hence $J$ is simple.  Moreover, for each $x\in J$ we have $x\,\h\,x^2,$ so that, by Green's Theorem \cite[Theorem 2.16]{Clifford:1961}, the $\h$-class of $x$ is a group.  Thus $J$ is a union of groups.  Hence, by \cite[Theorem 4.5]{Clifford:1961}, $J$ is completely simple.

Conversely, if $S$ has a completely simple minimal ideal, say $J,$ then $J$ is the union both of the minimal $\l$-classes and of the minimal $\r$-classes \cite[Theorem~3.2]{Clifford:1948}.  Consequently, the $\h$-classes of $S$ contained in $J$ are minimal. 
\end{proof}

For semigroups with zero, the theory of minimal $\k$-classes becomes trivial, so we require the notion of 0-minimality.  Suppose that $S$ has a zero element 0.  A $\k$-class of $S$ is called {\em 0-minimal} if $\{0\}$ is the only $\k$-class below it.  The semigroup $S$ is {\em left/right 0-simple} if $S^2\neq\{0\}$ and the $\l/\r$-classes of $S$ are $\{0\}$ and $S{\setminus}\{0\}.$  Similarly, $S$ is {\em 0-simple} if $S^2\neq 0$ and the $\j$-classes of $S$ are $\{0\}$ and $S{\setminus}\{0\}.$ 
A {\em completely 0-simple} semigroup is a 0-simple semigroup that possesses both 0-minimal $\l$-classes and 0-minimal $\r$-classes.


\section{$\k$-heights and Stability}\label{sec:stability}

We now introduce a crucial notion for this paper, namely stability.

\begin{defn}
A semigroup $S$ is {\em left stable} if $\leql\cap\,\j=\l.$  Dually, $S$ is {\em right stable} if $\leqr\cap\,\j=\r.$  Finally, $S$ is {\em stable} if it is both left stable and right stable.
\end{defn}


The class of stable semigroups includes all group-bound semigroups (where {\em group-bound} means that every element has some power belonging to a subgroup) \cite[Proposition 7]{East:2020}, and hence all finite semigroups.  The one Green's relation that we have hitherto not defined is $\d=\l\vee\r\,(=\l\circ\r=\r\circ\l)$, for which, in general, there is no natural associated preorder.  However, we have $\d=\j$ in stable semigroups \cite[Corollary 10]{East:2020}.

For a simple semigroup, being completely simple is equivalent to being stable \cite[Proposition 15]{East:2020}.  Consequently, minimal ideals of stable semigroups are completely simple.  This fact, together with Lemma \ref{lem:minideal}, yields an analogue of Lemma~\ref{lem:CSminideal} for stable semigroups.

\begin{lem}\label{lem:stable,minideal}
Let $\k\in\{\l,\r,\j\}.$  A stable semigroup has minimal $\k$-classes if and only if it has a completely simple minimal ideal (which is the union of all the minimal $\k$-classes).  Consequently, a stable semigroup with finite $\k$-height has a completely simple minimal ideal.
\end{lem}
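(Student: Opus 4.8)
The plan is to prove the two implications of the equivalence separately and then read off the ``consequently'' clause. For the forward direction, suppose the stable semigroup $S$ has minimal $\k$-classes; I would produce its minimal ideal and show it is completely simple. If $\k\in\{\l,\r\}$, then $S$ has a minimal ideal $J$ equal to the union of its minimal $\k$-classes, by the classical result recalled in Section~\ref{sec:prelim} \cite[Theorem 2.1]{Clifford:1948}. If $\k=\j$, then a minimal $\j$-class $J$ is an ideal: since $sat\leqj a$ for all $a\in J$ and $s,t\in S^1$, minimality forces $sat\in J$, so $S^1aS^1\subseteq J$ for every $a\in J$; and, there being at most one minimal $\j$-class, $J$ is trivially the union of the minimal $\j$-classes. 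In all three cases $J$ is the minimal ideal of the stable semigroup $S$, hence completely simple by the fact recorded earlier in this section (a simple semigroup is completely simple if and only if it is stable \cite[Proposition 15]{East:2020}).

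For the converse, suppose $S$ has a completely simple minimal ideal $J$. By \cite[Theorem 3.2]{Clifford:1948} --- as already invoked in the proof of Lemma~\ref{lem:CSminideal} --- $J$ is the union of the minimal $\l$-classes of $S$ and also the union of the minimal $\r$-classes, which disposes of the cases $\k\in\{\l,\r\}$. For $\k=\j$, I would use that $J$, being the minimal ideal, is a simple subsemigroup \cite[Theorem 1.1]{Clifford:1948}: any two elements of $J$ are then $\j$-related in $S$, since $J^1\subseteq S^1$, while conversely any element of $S$ that is $\j$-related to a point of $J$ lies in $S^1JS^1=J$. Hence $J$ is a single, and plainly minimal, $\j$-class of $S$, so $S$ has minimal $\j$-classes with union $J$.

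Finally, for the ``consequently'' clause: if $\HK(S)$ is finite then the least element of a chain of $\k$-classes of maximal length is a minimal $\k$-class, so $S$ has minimal $\k$-classes and the equivalence just proved applies --- alternatively one may cite Lemma~\ref{lem:minideal} together with the completely-simple fact above. I do not anticipate a genuine obstacle: the argument is an assembly of the structural facts recalled in Section~\ref{sec:prelim} and the stability hypothesis. The only steps needing a line of verification are the two $\k=\j$ cases --- that a minimal $\j$-class is an ideal, and that a simple ideal forms a single $\j$-class of the ambient semigroup --- and each follows at once from the fact that principal ideals can only shrink along the $\leqj$ preorder.
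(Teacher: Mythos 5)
Your proposal is correct and follows essentially the same route the paper intends: the paper derives this lemma immediately from the fact that minimal ideals of stable semigroups are completely simple (via \cite[Proposition 15]{East:2020}) together with the structural facts of Section~\ref{sec:prelim} and Lemma~\ref{lem:minideal}, which is exactly the assembly you carry out, with the two $\k=\j$ verifications correctly supplied.
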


It follows immediately from the definition that $S$ is left stable if and only if every $\j$-class of $S$ is a union of pairwise incomparable $\l$-classes.  In fact, it turns out that $S$ is left stable if and only if it satisfies the condition $M_L^*$, that for each $\j$-class $J$ of $S$ the set of $\l$-classes contained in $J$ has a minimal element \cite[Lemma 6.42]{Clifford:1967}.  Clearly, if $S$ has finite $\l$-height then it satisfies $M_L^*$.  Thus we have:

\begin{lem}\label{lem:stable}
If $\HL(S)$ (resp.\ $\HR(S)$) is finite, then $S$ is left (resp.\ right) stable.  Consequently, if both $\HL(S)$ and $\HR(S)$ are finite, then $S$ is stable.
\end{lem}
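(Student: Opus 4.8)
The plan is to reduce everything to the characterisation of left stability via the condition $M_L^*$, which is quoted just above the statement: $S$ is left stable if and only if for every $\j$-class $J$ of $S$ the set of $\l$-classes contained in $J$ has a minimal element. So the whole argument is: finite $\l$-height $\Rightarrow$ $M_L^*$ $\Rightarrow$ left stable, and dually on the right; the final "consequently" is then immediate since a semigroup that is both left stable and right stable is stable by definition.

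First I would observe that if $\HL(S)=n<\infty$, then every $\l$-chain in $S$ has length at most $n$, and in particular the poset of $\l$-classes satisfies the descending chain condition. Now fix an arbitrary $\j$-class $J$ and let $P_J$ be the (non-empty) set of $\l$-classes contained in $J$, partially ordered by the restriction of $\leq$ on $\l$-classes. Any descending chain in $P_J$ is a descending chain of $\l$-classes of $S$, hence has length $\le n$; so $P_J$ itself satisfies DCC and therefore has a minimal element. Since $J$ was arbitrary, $S$ satisfies $M_L^*$, and hence, by \cite[Lemma 6.42]{Clifford:1967} as recalled above, $S$ is left stable. The right-handed statement follows by the left–right dual argument (replacing $\l$ by $\r$, $M_L^*$ by $M_R^*$, and applying the dual of \cite[Lemma 6.42]{Clifford:1967}). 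The last sentence is then just the definition of stability: if $\HL(S)$ and $\HR(S)$ are both finite then $S$ is both left stable and right stable, i.e.\ stable.

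There is really no serious obstacle here; the lemma is essentially a packaging of $M_L^*$ together with the already-cited equivalence. The only point requiring a moment's care is the distinction, flagged in the Preliminaries, between "finite $\k$-height" and merely "satisfying the minimal condition on $\k$-classes": one must use that a finite bound on the lengths of $\l$-chains forces DCC on every sub-poset of $\l$-classes, not just the global minimal-condition. Given that, the argument is a two-line consequence of the quoted results, so in the write-up I would keep it short — essentially the paragraph above — rather than re-proving the $M_L^*$ characterisation.
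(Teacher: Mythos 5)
Your proof is correct and follows essentially the same route as the paper: finite $\l$-height gives a minimal $\l$-class inside each $\j$-class (the condition $M_L^*$), which is equivalent to left stability by the cited \cite[Lemma 6.42]{Clifford:1967}, with the right-hand case dual and the final claim immediate from the definition of stability.
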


It turns out that semigroups with finite $\h$-height are also stable.  In fact, we have the following stronger result.

\begin{lem}\label{lem:gb}
If $\HH(S)=n<\infty,$ then for every $a\in S$ the element $a^n$ belongs to a subgroup of $S.$  In particular, $S$ is group-bound (and hence stable).
\end{lem}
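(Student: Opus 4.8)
The plan is to fix $a \in S$ and study the $\h$-chain structure among the powers $a, a^2, a^3, \dots$. Since $a^{k+1} = a \cdot a^k = a^k \cdot a$, we always have $a^{k+1} \leqh a^k$, so the sequence $(a^k)_{k \geq 1}$ is weakly decreasing under $\leqh$. The key observation is that if $a^i \,\h\, a^{i+1}$ for some $i$, then in fact $a^i \,\h\, a^j$ for all $j \geq i$: from $a^i \,\h\, a^{i+1}$ we get $a^i = u a^{i+1} = a^{i+1} v$ for some $u, v \in S^1$, and multiplying the relation $a^i \,\r\, a^{i+1}$ on the left by $a$ (using that $\r$ is a left congruence) gives $a^{i+1} \,\r\, a^{i+2}$, and dually $a^{i+1} \,\l\, a^{i+2}$ since $\l$ is a right congruence; iterating yields $a^i \,\h\, a^{i+k}$ for all $k \geq 0$. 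Contrapositively, if $a^i \not\!\h\, a^{i+1}$, i.e. $a^{i+1} \lh a^i$, for $i = 1, \dots, m-1$, then $a, a^2, \dots, a^m$ is a strictly decreasing $\h$-chain of length $m$; since $\HH(S) = n$, this forces $m \leq n$, so there must exist some $i \leq n$ with $a^i \,\h\, a^{i+1}$, and hence (by the above) $a^n \,\h\, a^{n+1}$.

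Now I would invoke Green's Theorem in the form already cited in the proof of Lemma~\ref{lem:CSminideal}: since $a^n$ and $a^{2n}$ lie in the same $\h$-class $H$ (both equal $a^n \,\h\, a^{n+1} \,\h\, \cdots$) and $a^n \cdot a^n = a^{2n} \in H$, the $\h$-class $H$ contains an idempotent and is therefore a subgroup of $S$. Thus $a^n$ belongs to a subgroup, which proves the first assertion; group-boundedness is then immediate by definition, and stability follows from the already-quoted fact \cite[Proposition 7]{East:2020} that group-bound semigroups are stable.

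The only delicate point is the ``iteration'' step: one must check carefully that $a^i \,\h\, a^{i+1}$ propagates upward. The cleanest way is to treat $\l$ and $\r$ separately. For $\r$: $a^i \,\r\, a^{i+1}$ means $a^i S^1 = a^{i+1} S^1$; multiplying on the left by $a$ gives $a^{i+1} S^1 = a^{i+2} S^1$, so $a^{i+1} \,\r\, a^{i+2}$, and induction gives $a^j \,\r\, a^{j+1}$ for all $j \geq i$, whence $a^n \,\r\, a^{2n}$. Dually $a^n \,\l\, a^{2n}$, using $a^i S^1 \cdot$ — sorry, using right multiplication and the ideals $S^1 a^i$. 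Combining, $a^n \,\h\, a^{2n}$, which is exactly what Green's Theorem needs. I expect this to be the main obstacle only in the bookkeeping sense; conceptually it is routine once one recalls that $\r$ (resp.\ $\l$) is a left (resp.\ right) congruence.
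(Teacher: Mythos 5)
Your proof is correct and follows essentially the same route as the paper: both locate an $\h$-related pair among the powers $a,a^2,\dots,a^{n+1}$, deduce $a^n\,\h\,a^{2n}$, and conclude via Green's Theorem that the $\h$-class of $a^n$ is a group. The only difference is in one step's execution: you propagate the relation up the chain of powers using that $\r$ (resp.\ $\l$) is a left (resp.\ right) congruence, whereas the paper runs an explicit element computation ($a^n=s^ka^{k(j-i)+n}$); your version is, if anything, slightly cleaner.
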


\begin{proof}
Let $a\in S.$  Then 
$$a\geqh a^2\geqh\cdots\geqh a^{n+1}\geqh\cdots.$$
Since $\HH(S)=n,$ there exist $i,j\in\{1,\dots,n+1\}$ with $i<j$ such that $a^i\,\h\,a^j.$  Then, in particular $a^i\,\l\,a^j$ so there is $s\in S^1$ such that $a^i=sa^j$.  It follows by an easy induction argument that $a^n=s^ka^{k(j-i)+n}$ for all $k\in\N.$  Thus, we have $a^n=s^na^{n(j-i+1)}=s^na^{n(j-i-1)}a^{2n}$, and hence $a^n\,\l\,a^{2n}$.  A dual argument proves that $a^n\,\r\,a^{2n}$, and hence $a^n\,\h\,a^{2n}$.  Thus the $\h$-class of $a^n$ is a subgroup of $S,$ as required.
\end{proof}

The next result is the first on the main theme of the article: to explore the relationships between the four $\k$-heights.

\begin{prop}\label{prop:stable}
Let $S$ be a semigroup.  
\begin{enumerate}[leftmargin=*]
\item Suppose that $S$ is left stable.  Then every $\h$-chain of $S$ is an $\r$-chain, and every $\l$-chain of $S$ is a $\j$-chain.  Consequently, $\HH(S)\leq\HR(S)$ and $\HL(S)\leq\HJ(S).$
\item Suppose that $S$ is right stable.  Then every $\h$-chain of $S$ is an $\l$-chain, and every $\r$-chain of $S$ is a $\j$-chain.  Consequently, $\HH(S)\leq\HL(S)$ and $\HR(S)\leq\HJ(S).$
\item Suppose that $S$ is stable.  Then every $\h$-chain of $S$ is both an $\l$-chain and an $\r$-chain, and all $\l$-chains and $\r$-chains of $S$ are $\j$-chains.  Consequently, 
$$\HH(S)\leq\min\big(\HL(S),\HR(S)\big)\quad\text{and}\quad\max\big(\HL(S),\HR(S)\big)\leq\HJ(S).$$
\end{enumerate}
\end{prop}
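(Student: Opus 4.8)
The strategy is to prove the two assertions of part (1) directly; part (2) is then obtained by the obvious left--right dual, and part (3) is an immediate consequence of combining (1) and (2), since a stable semigroup is both left and right stable. The only nontrivial content is therefore in part (1), and each of its two claims rests on the same elementary mechanism: an $\h$-chain (resp.\ $\l$-chain) is by definition strictly decreasing under $\leqh$ (resp.\ $\leql$), and we must upgrade this to being strictly decreasing under $\leqr$ (resp.\ $\leqj$). Since $\leqh\,\subseteq\,\leqr$ and $\leql\,\subseteq\,\leqj$, the decreasing part is automatic; what needs proof is that \emph{strictness} is preserved, i.e.\ that consecutive terms which are $\h$-incomparable are in fact $\r$-incomparable, and consecutive terms which are $\l$-incomparable are $\j$-incomparable.

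For the second claim of (1), suppose $a\ll b$ (so $a\leql b$ but not $b\leql a$). Then certainly $a\leqj b$; if we had $b\leqj a$ as well, then $a\,\j\,b$, and combined with $a\leql b$ this would give $b\leql a$ by left stability ($\leql\cap\,\j=\l$), a contradiction. Hence $a\lj b$, so every consecutive pair in an $\l$-chain is a consecutive pair in a $\j$-chain, and the same sequence is a $\j$-chain. For the first claim of (1), suppose $a\lh b$; I want $a\lr b$. Here the subtlety is that $a\lh b$ does \emph{not} immediately give $a\leqr b$ together with $b\not\leqr a$ --- it gives $a\leqh b=\,(a\leql b$ and $a\leqr b)$, and the failure of $a\,\h\,b$ only tells us $b\not\leql a$ \emph{or} $b\not\leqr a$. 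So I need to rule out the case $a\leqr b$, $b\leqr a$, $a\leql b$, $b\not\leql a$. But $a\,\r\,b$ together with $a\leql b$ gives $a\,\j\,b$ (via $\r\subseteq\j$, or directly), and then left stability applied to $a\leql b$ forces $b\leql a$ --- contradiction. Hence whenever $a\lh b$ we actually have $a\lr b$, so an $\h$-chain is an $\r$-chain.

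The consequences for heights are then formal: if $S$ has a $\k$-chain of length $m$ it has a $\k'$-chain of length $m$ whenever every $\k$-chain is a $\k'$-chain, so $\HK(S)\leq H_{\k'}(S)$ (interpreting $\infty\leq\infty$ appropriately). Applying this to the four containments established in (1) and (2) yields $\HH(S)\leq\HR(S)$, $\HL(S)\leq\HJ(S)$, $\HH(S)\leq\HL(S)$, $\HR(S)\leq\HJ(S)$, and taking minima and maxima gives the displayed inequalities in (3). I do not anticipate a genuine obstacle here; the one place to be careful is the first claim of part (1), where one must resist the temptation to conclude $a\lr b$ from $a\lh b$ without invoking stability --- this is exactly where left stability (rather than right stability) is needed, and getting the direction right is the only subtle point.
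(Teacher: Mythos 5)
Your proposal is correct and follows essentially the same route as the paper: for $a\lh b$ you assume $a\,\r\,b$, deduce $a\,\j\,b$ and then $a\,\l\,b$ by left stability, contradicting $a\not\mathrel{\h}b$; and for $a\ll b$ you use $\leql\cap\,\j=\l$ directly, with (2) by duality and (3) by combining (1) and (2). No gaps.
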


\begin{proof}
(1) Consider $a,b\in S$ with $a\lh b.$  Then $a\leql b$ and $a\leqr b.$  If $a\,\r\,b$, then certainly $a\,\j\,b$, which implies that $a\,\l\,b,$ since $S$ is left stable.  Then $a\,\h\,b$, a contradiction.  Thus, we must have $a\lr b.$  We conclude that every $\h$-chain is an $\r$-chain, and hence $\HH(S)\leq\HR(S).$

It follows immediately from the definition of left stability that every $\l$-chain is a $\j$-chain, and hence $\HL(S)\leq\HJ(S).$

(2) holds by left-right duality, and (3) follows from (1) and (2).
\end{proof}

\begin{cor}\label{cor:H}
For any semigroup $S$, either $\HH(S)\leq\min\big(\HL(S),\HR(S)\big)$ or $\HH(S)$ is infinite.
\end{cor}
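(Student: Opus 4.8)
The plan is to deduce Corollary~\ref{cor:H} from Proposition~\ref{prop:stable}(3) by showing that the only way the desired inequality can fail is if $S$ has infinite $\h$-height, and this happens precisely because $S$ fails to be stable. The key observation is that Proposition~\ref{prop:stable}(3) already gives the conclusion $\HH(S)\leq\min(\HL(S),\HR(S))$ whenever $S$ is stable, so everything reduces to handling the non-stable case.

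First I would argue by contraposition: suppose $\HH(S)$ is finite. Then by Lemma~\ref{lem:gb}, $S$ is group-bound and hence stable. Now that $S$ is stable, Proposition~\ref{prop:stable}(3) applies directly and yields $\HH(S)\leq\min\big(\HL(S),\HR(S)\big)$, which is the first alternative in the statement. Thus for an arbitrary semigroup $S$, either $\HH(S)$ is infinite, or $\HH(S)$ is finite and the first alternative holds; in every case the disjunction in the corollary is satisfied.

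This is essentially a one-step argument, so there is no real obstacle — the work has already been done in Lemma~\ref{lem:gb} (which shows finite $\h$-height forces stability) and Proposition~\ref{prop:stable}(3) (which handles stable semigroups). The only thing to be careful about is the logical packaging: the corollary is an ``either/or'' statement, and the cleanest way to present it is to assume the negation of the second disjunct (i.e. $\HH(S)<\infty$) and derive the first. One should also note that no finiteness assumption on $\HL(S)$ or $\HR(S)$ is needed: if $\HH(S)$ is finite, the inequality $\HH(S)\leq\min(\HL(S),\HR(S))$ holds even when one or both of the right-hand values is infinite, since a finite number is trivially at most an infinite one (and Proposition~\ref{prop:stable}(3) is stated at the level of chains, so its inequalities are valid in the extended sense). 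Hence the proof is complete in a couple of sentences.
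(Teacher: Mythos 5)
Your proposal is correct and follows essentially the same route as the paper: assume $\HH(S)$ is finite, deduce stability, and then invoke Proposition~\ref{prop:stable}(3). In fact you cite the right source for the stability step (Lemma~\ref{lem:gb}), whereas the paper's proof points to Lemma~\ref{lem:stable}, which concerns finite $\l$- and $\r$-heights and appears to be a citation slip for Lemma~\ref{lem:gb}.
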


\begin{proof}
Suppose that $\HH(S)$ is finite.  Then $S$ is stable by Lemma \ref{lem:stable}, and hence $\HH(S)\leq\min\big(\HL(S),\HR(S)\big)$ by Proposition \ref{prop:stable}.
\end{proof}

\section{Small $\k$-heights}\label{sec:small}

In this section we consider the relationships between the different $\k$-heights where at least one of them is equal to 1 or 2.

Recall that $\HJ(S)=1$ precisely when $S$ is simple.  The following result provides necessary and sufficient conditions for a semigroup to have $\l$-height 1 or $\r$-height 1 in terms of the $\j$-height and one-sided stability.

\begin{prop}\label{prop: HL=1}
For a semigroup $S,$ the following hold.
\begin{enumerate}[leftmargin=*]
\item $\HL(S)=1$ if and only if $\HJ(S)=1$ and $S$ is left stable.
\item $\HR(S)=1$ if and only if $\HJ(S)=1$ and $S$ is right stable.
\end{enumerate}
\end{prop}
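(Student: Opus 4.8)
The plan is to prove part (1) in full and then deduce part (2) by the standard left--right duality (interchanging $\l$ with $\r$ and ``left'' with ``right'' everywhere, invoking the right-handed versions of Lemmas~\ref{lem:stable} and~\ref{lem:minideal} and Proposition~\ref{prop:stable}(2)). For part (1) I would treat the two implications separately.

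For the forward implication, assume $\HL(S)=1$. Since $\HL(S)$ is finite, $S$ is left stable by Lemma~\ref{lem:stable}, so it only remains to show $\HJ(S)=1$, i.e.\ that $S$ is simple. The key point is that $\HL(S)=1$ says precisely that the poset of $\l$-classes of $S$ is an antichain, so every $\l$-class of $S$ is minimal. As $\HL(S)$ is finite, Lemma~\ref{lem:minideal} (with $\k=\l$) provides a minimal ideal of $S$ equal to the union of the minimal $\l$-classes; but that union is all of $S$. Hence $S$ is its own minimal ideal, and since a minimal ideal is a simple subsemigroup, $S$ is simple, as required.

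For the converse, assume $\HJ(S)=1$ and $S$ is left stable. By Proposition~\ref{prop:stable}(1), left stability gives $\HL(S)\leq\HJ(S)=1$; since $S$ is nonempty we also have $\HL(S)\geq1$, and so $\HL(S)=1$. (One could equally argue straight from the definitions: if $a\leql b$ then $a\leqj b$, and simplicity forces $a\,\j\,b$; then left stability, $\leql\cap\,\j=\l$, yields $a\,\l\,b$, so the poset of $\l$-classes is an antichain.)

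I do not expect a genuine obstacle: once the preliminary lemmas are available, everything reduces to bookkeeping with the definitions. The one subtlety, which I would be careful to sidestep, is attempting to prove the forward direction ``by hand'' — extracting an $\l$-chain of length $2$ directly from a $\j$-chain $a\lj b$ by writing $a=sbt$ mixes left and right multiplication, and the case in which $bt$ is not $\j$-related to $b$ yields an $\r$-chain rather than an $\l$-chain, which does not obviously help. Passing through Lemma~\ref{lem:minideal} avoids this difficulty entirely, and the dual argument gives part (2) verbatim.
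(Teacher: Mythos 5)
Your proposal is correct and follows essentially the same route as the paper: the forward direction observes that $S$ is the union of its (all minimal) $\l$-classes and hence equals its minimal ideal, so is simple, with left stability coming from Lemma~\ref{lem:stable}; the converse is exactly the appeal to Proposition~\ref{prop:stable}(1). Your version merely spells out the use of Lemma~\ref{lem:minideal} that the paper leaves implicit.
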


\begin{proof}
Clearly it suffices to prove (1).  If $\HL(S)=1,$ then $S,$ being the union of minimal $\l$-classes, is simple, so $\HJ(S)=1,$ and $S$ is left stable by Lemma \ref{lem:stable}.  The converse follows from Proposition \ref{prop:stable}(1).
\end{proof}

The assumptions of left stability and of right stability are necessary in the respective parts of Proposition \ref{prop: HL=1}.  Indeed, there exist semigroups with $\r$-height 1 (and hence $\j$-height 1) but with infinite $\l$-height (or vice versa).  For example, the Baer-Levi semigroup $\mathcal{BL}_X$ on an infinite set $X,$ defined as the set of all injective maps $\alpha : X\to X$ with $|X\backslash X\alpha|=|X|$, is right simple \cite[Theorem 8.2]{Clifford:1967}, so has $\r$-height 1.  On the other hand, we have $\alpha\leql\beta$ in $\mathcal{BL}_X$ if and only if $X\alpha\subseteq X\beta$ and $|X\beta{\setminus}X\alpha|=|X|$ \cite[Theorem 8]{Pinto}; consequently, $\mathcal{BL}_X$ has infinite $\l$-height.

We now provide several equivalent characterisations for a semigroup to have both $\l$-height 1 and $\r$-height 1.

\begin{prop}\label{prop:HK=1}
For a semigroup $S,$ the following are equivalent:
\begin{enumerate}
\item $\HL(S)=1$ and $\HR(S)=1$;
\item $\HL(S)=1$ and $\HR(S)$ is finite;
\item $\HL(S)=1$ and $S$ is stable;
\item $\HR(S)=1$ and $\HL(S)$ is finite;
\item $\HR(S)=1$ and $S$ is stable;
\item $\HJ(S)=1$ and $S$ is stable;
\item $\HH(S)=1$;
\item $S$ is completely simple.
\end{enumerate}
\end{prop}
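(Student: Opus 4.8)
The plan is to establish the chain of implications $(7)\Rightarrow(1)\Rightarrow(2)\Rightarrow(3)\Rightarrow(6)\Rightarrow(8)\Rightarrow(7)$, together with the symmetric implications $(1)\Rightarrow(4)\Rightarrow(5)\Rightarrow(6)$, so that all eight conditions become equivalent. Most of these arrows are already essentially available from the results proved earlier in the excerpt, so the proof is largely a matter of assembling them in the right order; the one genuinely new piece of work is the implication $(6)\Rightarrow(8)$, i.e.\ showing that a stable simple semigroup is completely simple, but this is exactly \cite[Proposition 15]{East:2020}, cited just before Lemma~\ref{lem:stable,minideal}, so even that reduces to a citation.

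**Assembling the easy implications.** First I would note $(7)\Rightarrow(1)$: if $\HH(S)=1$ then $\HH(S)$ is finite, so by Lemma~\ref{lem:stable} (via Lemma~\ref{lem:gb}, which gives that $S$ is stable) and Proposition~\ref{prop:stable}(3) we get $1=\HH(S)\leq\min(\HL(S),\HR(S))$, forcing $\HL(S)=\HR(S)=1$. The implications $(1)\Rightarrow(2)$ and $(1)\Rightarrow(4)$ are trivial. For $(2)\Rightarrow(3)$: $\HR(S)$ finite gives right stability by Lemma~\ref{lem:stable}, and $\HL(S)=1$ finite gives left stability by the same lemma, so $S$ is stable. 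Symmetrically $(4)\Rightarrow(5)$. Then $(3)\Rightarrow(6)$ and $(5)\Rightarrow(6)$ follow from Proposition~\ref{prop: HL=1} (parts (1) and (2) respectively), which tell us $\HL(S)=1$ or $\HR(S)=1$ already forces $\HJ(S)=1$.

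**The substantive step and closing the loop.** The implication $(6)\Rightarrow(8)$ is the heart of the matter: assuming $\HJ(S)=1$ (so $S$ is simple) and $S$ stable, we must conclude $S$ is completely simple. I would invoke \cite[Proposition 15]{East:2020}, which states precisely that a simple semigroup is completely simple if and only if it is stable. Finally $(8)\Rightarrow(7)$: a completely simple semigroup has, by definition, both minimal $\l$-classes and minimal $\r$-classes, hence by Lemma~\ref{lem:CSminideal} its $\h$-classes (all of which lie in the unique minimal ideal, since $S$ is simple) are minimal, and moreover $S$ is a union of groups so every $\h$-class is a group; in any case $\HH(S)=1$ because all of $S$ forms a single minimal $\h$-class --- more carefully, $S$ being completely simple means $\leql$ and $\leqr$ are both symmetric on $S$ (any two elements are $\l$-related and $\r$-related within the unique $\l$/$\r$-class structure of a completely simple semigroup), so $\leqh$ is symmetric and $\HH(S)=1$. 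This closes the cycle and, combined with the symmetric branch, yields the equivalence of all eight statements.

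**Anticipated obstacle.** The only place requiring care is the last step $(8)\Rightarrow(7)$: one must be slightly careful about what "completely simple" buys us, since a completely simple semigroup need not be a \emph{single} $\l$-class or $\r$-class. The clean way is to recall the Rees matrix structure of a completely simple semigroup, from which it is immediate that for any $a,b$ one has $a\leqh b$ (indeed $a=ub=bv$ for suitable $u,v$), so there are no strict $\h$-relations and $\HH(S)=1$. Everything else is bookkeeping with the lemmas and propositions already in hand.
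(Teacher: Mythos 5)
There is a genuine gap in your step $(7)\Rightarrow(1)$, and it is the only arrow that closes your cycle, so the equivalence is not established as written. Proposition~\ref{prop:stable}(3) gives $\HH(S)\leq\min\big(\HL(S),\HR(S)\big)$, so from $\HH(S)=1$ you only obtain $\min\big(\HL(S),\HR(S)\big)\geq 1$, which is vacuous; the inequality points the wrong way to ``force'' $\HL(S)=\HR(S)=1$. (To deduce $\HL(S)=1$ from $\HH(S)=1$ you would need $\HL(S)\leq\HH(S)$, which is not what that proposition says.) Without this arrow your argument is a one-way chain $(1)\Rightarrow(2)\Rightarrow(3)\Rightarrow(6)\Rightarrow(8)\Rightarrow(7)$ plus the branch through $(4)$ and $(5)$, and nothing returns to $(1)$. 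The repair is short and is essentially what the paper does: if $\HH(S)=1$ then every $\h$-class is minimal, so by Lemma~\ref{lem:CSminideal} the semigroup $S$ equals its completely simple minimal ideal, giving $(7)\Rightarrow(8)$ directly; and $(8)\Rightarrow(1)$ holds because a completely simple semigroup is the union of its minimal $\l$-classes and also of its minimal $\r$-classes, so $\HL(S)=\HR(S)=1$.

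Apart from that, your architecture is the same as the paper's: every condition is funnelled into ``$S$ is simple and stable,'' and the substantive content is the citation of \cite[Proposition 15]{East:2020} that simple plus stable equals completely simple. (The paper phrases this as a hub rather than a cycle: each of (1)--(7) implies simple and stable, hence (8), and conversely.) One further small inaccuracy: in your discussion of $(8)\Rightarrow(7)$ the parenthetical claim that $a\leqh b$ for \emph{all} $a,b$ in a completely simple semigroup is false --- distinct $\l$-classes (and distinct $\r$-classes) are pairwise incomparable, so $\leqh$ is far from total. Your ``more careful'' version is the right one: on a completely simple semigroup $\leql$ and $\leqr$ coincide with $\l$ and $\r$ respectively, so $\leqh$ coincides with $\h$ and there are no strict $\h$-chains, whence $\HH(S)=1$.
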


\begin{proof}
Let $\k\in\{\l,\r,\j,\h\}.$  If $\HK(S)=1$ then $S,$ being the union of minimal $\k$-classes, is simple.  Also, by Lemmas \ref{lem:stable} and \ref{lem:gb}, each of (1), (2), (4) and (7) implies that $S$ is stable.  Thus, if any of (1)-(7) holds, then $S$ is simple and stable, and hence $S$ is completely simple by \cite[Proposition 15]{East:2020}.

Conversely, if $S$ is completely simple, then $S$ is stable and $\HK(S)=1$ for each $\k\in\{\l,\r,\j,\h\}.$
\end{proof}

An immediate consequence of Proposition~\ref{prop:HK=1} is that for a stable semigroup $S$,
\[
\HL(S)=1\,\Leftrightarrow\,\HR(S)=1\,\Leftrightarrow\,\HJ(S)=1\,\Leftrightarrow\,\HH(S)=1\,\Leftrightarrow\, S\text{ is completely simple}.
\]

We now turn to the situation where one of the $\k$-heights is 2. 

\begin{prop}\label{prop:2,3}
If $S$ is a semigroup such that $\HL(S)=2$ or $\HR(S)=2$, then \mbox{$\HJ(S)\in\{2,3\}.$} 
\end{prop}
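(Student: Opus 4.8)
The plan is to prove the statement by symmetry: it suffices to treat the case $\HL(S)=2$, since the case $\HR(S)=2$ follows by left-right duality. So assume $\HL(S)=2$. First I would establish the lower bound $\HJ(S)\geq 2$. Since $\HL(S)$ is finite, Lemma~\ref{lem:stable} gives that $S$ is left stable, so by Proposition~\ref{prop:stable}(1) every $\l$-chain is a $\j$-chain; in particular an $\l$-chain of length $2$ is a $\j$-chain of length $2$, so $\HJ(S)\geq 2$. (Equivalently one could invoke the already-stated fact $\HL(S)\leq\HJ(S)$ for left stable $S$.)

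The bulk of the work is the upper bound $\HJ(S)\leq 3$, i.e.\ showing there is no $\j$-chain of length $4$. Suppose for contradiction that $a\gj b\gj c\gj d$ is a $\j$-chain in $S$. The key observation is that, since $S$ is left stable, distinct $\l$-classes within a single $\j$-class are incomparable, and moreover $\l\subseteq\j$; so along any $\j$-chain each strict $\j$-step forces the $\l$-classes to be distinct, but the $\l$-relations among elements of the chain are constrained. More precisely I would argue as follows: from $a\gj b\gj c\gj d$ we have $a\geqj b\geqj c\geqj d$ with no two $\j$-related. Now consider the corresponding $\leql$ relations. Because $\leql\subseteq\leqj$, we certainly have no upward $\leql$ relations that skip a $\j$-class. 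The plan is to produce, from this length-$4$ $\j$-chain, a strict $\l$-chain of length $3$, contradicting $\HL(S)=2$. To do this I would use the standard fact (from the structure of Green's relations, e.g.\ via the ``egg-box'' / Green's Lemma machinery in \cite{Clifford:1961}) that if $x\geqj y$ then there exists an element $z$ with $y\,\r\,z$ and $z\leql x$ — that is, $\leqj\,\subseteq\,(\leql)\circ(\geqr)$, and dually. Applying this to $b\leqj a$ yields $b'$ with $b\,\r\,b'\leql a$; applying it again to $c\leqj b\,(\r\,b')$, and using that $\r$ is a left congruence to transport, one extracts $c'$ with $c\,\r\,c'\leql b'$; and once more for $d$. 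One then checks the three $\leql$ relations $d'\leql c'\leql b'\leql a$ are in fact strict, using left stability: if, say, $c'\,\l\,b'$ then $c'\,\j\,b'$, but $c'\,\r\,c$ forces $c'\,\j\,c$ and $b'\,\r\,b$ forces $b'\,\j\,b$, whence $b\,\j\,c$, contradicting $b\gj c$. This gives an $\l$-chain of length $\geq 4$, contradicting $\HL(S)=2$, and hence $\HJ(S)\leq 3$.

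An alternative, possibly cleaner route for the upper bound: since $S$ is left stable we may also use that each $\j$-class is a union of pairwise incomparable $\l$-classes together with a counting/pigeonhole argument on a maximal $\l$-chain refining a given $\j$-chain. Concretely, given a $\j$-chain $a_1\gj a_2\gj\cdots\gj a_k$, I would refine it within the semigroup to a $\leql$-chain by, at each step, replacing $a_{i+1}$ by a $\leql$-smaller, $\r$-equivalent element lying ``below'' $a_i$ in the $\l$-order; the length of the resulting $\l$-chain is at least $k$ once one verifies strictness via left stability as above. Setting $k=4$ then contradicts $\HL(S)=2$. I would pick whichever of these two presentations turns out to require the least ad hoc manipulation of Green's relations; the substantive content is the same.

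The main obstacle I anticipate is making the passage ``$\j$-chain of length $4$ $\Rightarrow$ $\l$-chain of length $4$'' fully rigorous: one must be careful that the auxiliary elements $b',c',d'$ obtained by moving within $\r$-classes genuinely sit in the correct $\j$-classes (this is exactly where stability, via $\d=\j$ from \cite[Corollary~10]{East:2020}, is used) and that the resulting $\leql$ relations are strict rather than equalities of $\l$-classes. A secondary subtlety is confirming that $\HJ(S)=2$ is actually attainable (so that the conclusion $\HJ(S)\in\{2,3\}$ is not vacuously weakenable to $\HJ(S)=3$); but that is not needed for the stated proposition, which only asserts membership in $\{2,3\}$, so I would relegate any such remark to the surrounding discussion rather than the proof.
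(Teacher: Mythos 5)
Your symmetry reduction and the lower bound $\HJ(S)\geq 2$ are fine and agree with the paper. The upper bound, however, rests on a false lemma. The ``standard fact'' you invoke --- that $y\leqj x$ yields $z$ with $y\,\r\,z$ and $z\leql x$ --- does not hold: writing $y=sxt$ one may take $z=sx$, but this gives only $y\leqr z$ and $z\leql x$, not $y\,\r\,z$. (In the free semigroup on $\{x,t\}$ we have $xt\leqj x$, the $\r$-class of $xt$ is $\{xt\}$, and $xt\not\leql x$.) Your strictness check depends essentially on the genuine equivalences $b\,\r\,b'$, $c\,\r\,c'$, $d\,\r\,d'$ in order to transport $\j$-classes (``$c'\,\r\,c$ forces $c'\,\j\,c$''); with only $c\leqr c'$ the element $c'$ may sit in a strictly higher $\j$-class than $c$, and the deduction that $b'\,\l\,c'$ would force $b\,\j\,c$ collapses. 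The ``alternative route'' in your third paragraph has exactly the same defect.

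There is also a structural reason the strategy cannot be repaired as stated: it purports to convert any $\j$-chain of length $k$ into an $\l$-chain of length $k$ using only left stability, which would prove $\HJ(S)\leq\HL(S)$ for every left stable semigroup of finite $\l$-height. The dual of Example~\ref{ex:R3 Jinf} is a semigroup with $\l$-height $3$ and infinite $\j$-height, so no such transfer exists; any correct proof must use the value $2$ in an essential way, and yours uses it only at the final ``contradiction'' step. The paper exploits the two-level structure directly: by Lemma~\ref{lem:minideal} the minimal ideal $J$ is the union of the minimal $\l$-classes, so all $\l$-classes in $S{\setminus}J$ are pairwise incomparable; from a putative $\j$-chain $(a,b,c,d)$ with $d\in J$, writing $b=sat$ and $c=ubv$, one first shows $v\neq 1$ and $v\,\l\,c$ (both via this incomparability), hence $v=cx$, then computes $c=ubv=ubcx=ububvx=ubusatvx$, so $busa\in S{\setminus}J$, whence $a\,\l\,busa$ and $a\leqj b$, contradicting $a\gj b$. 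You would need to rebuild your argument around this mechanism --- absorbing the right-hand multipliers using the incomparability of non-minimal $\l$-classes --- rather than around a (false) decomposition of $\leqj$ through $\r$-classes.
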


\begin{proof}
By symmetry, it suffices to consider the case that $\HL(S)=2.$
By Lemma \ref{lem:minideal}, $S$ has a minimal ideal, say $J,$ which is the union of all the minimal $\l$-classes of $S.$  Since $\HL(S)=2,$ the elements in $S{\setminus}J$ are all maximal under the $\l$-order, and in particular the $\l$-classes contained in $S{\setminus}J$ are incomparable.
 
As $\HL(S)$ is finite, $S$ is left stable by Lemma \ref{lem:stable}, and hence $\HJ(S)\geq2$ by Proposition \ref{prop:stable}(1).  Now suppose for a contradiction that $\HJ(S)\geq 4.$  Then there exists a $\j$-chain $(a,b,c,d)$ in $S$ where $d\in J$ (and hence $a,b,c\in S{\setminus}J$).  Thus, there are $s,t,u,v\in S^1$ such that $b=sat$ and $c=ubv.$ 
We claim that $v\in S$ (i.e.\ that $v\neq 1$).  Indeed, if $v=1,$ then $b\geql c,$ which implies that $b\,\l\,c$ (since the $\l$-classes in $S{\setminus}J$ are incomparable), but then $b\,\j\,c,$ a contradiction.  Now, we have $v,c\in S{\setminus}J$ with $v\geql c$, so $v\,\l\,c$.  Thus, there exists $x\in S^1$ such that $v=cx.$  Therefore, we have
$$c=ubv=ubcx=ububvx=ubusatvx.$$
Thus $busa\geqj c,$ so that $busa\in S{\setminus}J$ and hence $a\,\l\,busa.$  Then $a\,\j\,busa\leqj b,$ so $a\leqj b.$  But this contradicts the assumption that $a\gj b.$  Thus $\HJ(S)\leq 3,$ as required.
\end{proof}

For stable semigroups with $\l$-height 2, we obtain a far stronger statement than that of Proposition \ref{prop:2,3}.

\begin{prop}\label{prop:HK=2}
Let $S$ be a semigroup.  If $S$ is stable and $\HL(S)=2,$ then $\HH(S)=2$ and $\HR(S)=\HJ(S)\in\{2,3\}$.
Moreover, the following are equivalent:
\begin{enumerate}
\item $\HL(S)=\HR(S)=2$;
\item $\HJ(S)=\HH(S)=2$;
\item $\HJ(S)=2$ and $S$ is stable.
\end{enumerate}
\end{prop}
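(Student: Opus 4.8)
The plan is to prove the ``moreover'' part first, since the preliminary claim ($\HH(S)=2$ and $\HR(S)=\HJ(S)\in\{2,3\}$ for stable $S$ with $\HL(S)=2$) will follow from it together with Propositions~\ref{prop:stable} and~\ref{prop:2,3}. For the main claim: assuming $S$ stable and $\HL(S)=2$, Proposition~\ref{prop:stable}(3) gives $\HH(S)\le\min(\HL(S),\HR(S))$ and $\HR(S)\le\HJ(S)$. Since $S$ is stable and not completely simple (as $\HL(S)=2>1$), Proposition~\ref{prop:HK=1} rules out $\HR(S)=1$, $\HJ(S)=1$ and $\HH(S)=1$; so $\HH(S)\ge2$ and Proposition~\ref{prop:2,3} gives $\HJ(S)\in\{2,3\}$. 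Thus $2\le\HH(S)\le\min(2,\HR(S))$ forces $\HH(S)=2$ and $\HR(S)\ge2$; and $2\le\HR(S)\le\HJ(S)\le3$. It then remains to show $\HR(S)=\HJ(S)$ under the stated hypotheses, i.e.\ to exclude $\HR(S)=2$ with $\HJ(S)=3$.

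For the equivalence (1)$\Leftrightarrow$(2)$\Leftrightarrow$(3): that (3)$\Rightarrow$(1) is the crux. Assume $S$ stable and $\HJ(S)=2$. By Proposition~\ref{prop:stable}(3), $\max(\HL(S),\HR(S))\le\HJ(S)=2$, and by the remark after Proposition~\ref{prop:HK=1} (since $\HJ(S)=2\ne1$, $S$ is not completely simple) we have $\HL(S)\ne1$ and $\HR(S)\ne1$; hence $\HL(S)=\HR(S)=2$, giving (1). Conversely (1)$\Rightarrow$(3) is immediate from Lemma~\ref{lem:stable}, and (1)$\Rightarrow$(2) follows from the first part of the proposition applied symmetrically (left and right). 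Finally (2)$\Rightarrow$(3): if $\HH(S)=2$ then $S$ is stable by Lemma~\ref{lem:gb} (finite $\h$-height implies group-bound, hence stable), and $\HJ(S)=2$ is given; so (3) holds.

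The one genuinely new step is therefore: for stable $S$ with $\HL(S)=2$, prove $\HR(S)=\HJ(S)$, equivalently, if such an $S$ has $\HJ(S)=3$ then $\HR(S)=3$. Here is the approach. Since $\HJ(S)=3$, fix a $\j$-chain $(a,b,c)$ with $c$ in the minimal ideal $J$ (which by Lemma~\ref{lem:stable,minideal} is completely simple and is the union of all minimal $\l$-classes, equivalently all minimal $\r$-classes). I want to upgrade this to an $\r$-chain of length $3$. The elements $a,b\in S\setminus J$ lie in maximal $\l$-classes, and by left stability their $\l$-classes inside a common $\j$-class are incomparable. The idea, mirroring the computation in the proof of Proposition~\ref{prop:2,3}, is to show $a\gr b$ (if not, $a\,\r\,b$ by maximality in $S\setminus J$ under $\leqr$—using that $\HR(S)\le\HJ(S)=3$ means $S\setminus J$ splits into at most two $\r$-levels, but one must check $a,b$ are on the same level, which is where left stability and $b\leqj a$ in the same $\j$-class enter), and then $b\gr c$ follows since $c\in J$ is $\r$-minimal while $b\notin J$. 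Assembling, $(a,b,c)$ becomes an $\r$-chain, so $\HR(S)\ge3$, hence $\HR(S)=3=\HJ(S)$.

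The main obstacle will be the claim that $a$ and $b$ cannot be $\r$-related: I expect one needs to run essentially the same divisibility manipulation as in Proposition~\ref{prop:2,3}'s proof, but now on the $\r$-side, exploiting left stability to control the $\l$-classes of the intermediate products and to derive a contradiction with $a\gj b$. A cleaner route, which I would try first, is to bound $\HR(S)$ below by $\HJ(S)$ directly for \emph{left} stable semigroups with $\HL(S)=2$: since such a semigroup has all of $S\setminus J$ as mutually $\l$-incomparable maximal $\l$-classes, any $\j$-chain of length $3$ must have its top two entries $\r$-incomparable (else a short product argument collapses the $\j$-chain as in Proposition~\ref{prop:2,3}), and this is exactly what upgrades the $\j$-chain to an $\r$-chain.
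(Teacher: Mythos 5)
Your reductions for the ``moreover'' equivalences and for the preliminary bounds ($\HH(S)=2$ and $2\le\HR(S)\le\HJ(S)\le3$) are correct and match the paper: Proposition~\ref{prop:stable}(3), Proposition~\ref{prop:HK=1}, Proposition~\ref{prop:2,3}, Lemma~\ref{lem:stable} and Lemma~\ref{lem:gb} are exactly the right ingredients, deployed in the right places. But the one step you yourself identify as ``genuinely new'' --- that $\HR(S)=\HJ(S)$, i.e.\ that $\HR(S)=2$ and $\HJ(S)=3$ cannot coexist --- is not actually proved; it is left as a sketch, and the specific strategy you outline does not work as stated. You propose to take a $\j$-chain $(a,b,c)$ with $c\in J$ and to show $a\gr b$ so that the same triple becomes an $\r$-chain. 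The flaw is that $b\leqj a$ gives only $b=sat$ for some $s,t\in S^1$; it does not give $b\leqr a$, so $a$ and $b$ need not be $\r$-comparable at all, and your dichotomy ``either $a\gr b$ or $a\,\r\,b$'' has no basis. The appeal to ``at most two $\r$-levels in $S\setminus J$'' does not repair this, since incomparability is precisely the remaining possibility.

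The argument that closes the gap (and is the paper's) runs by contradiction rather than by upgrading the given chain: assume $\HR(S)=2$ and $\HJ(S)=3$, take a $\j$-chain $(a,b,c)$ with $c\in J$ and write $b=sat$. Since $b\notin J$, also $sa,at\notin J$, giving chains $a\geql sa\gl csa$ and $a\geqr at\gr atc$ (the last steps because $csa,atc\in J$). Now $\HL(S)=2$ forces $a\,\l\,sa$, and $\HR(S)=2$ forces $a\,\r\,at$, so $a=usa=atv$ for some $u,v\in S^1$, whence $a=usatv=ubv\leqj b$, contradicting $a\gj b$. Note that the working elements are $sa$ and $at$, not $b$ itself --- this is exactly why your plan of comparing $a$ with $b$ under $\leqr$ misses. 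If you prefer your ``constructive'' phrasing, the same computation shows that when $\HL(S)=2$ the relation $a\,\l\,sa$ is automatic, so $a\gr at$ must hold, and $(a,at,atc)$ is the desired $\r$-chain of length $3$.
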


\begin{proof}
Suppose that $S$ is stable with $\HL(S)=2.$  Then, by Propositions \ref{prop:stable}(3), \ref{prop:HK=1} and \ref{prop:2,3}, we have $\HH(S)=2$ and $2\leq\HR(S)\leq\HJ(S)\leq3.$  Suppose for a contradiction that $\HR(S)=2$ and $\HJ(S)=3.$  By Lemma \ref{lem:stable,minideal}, $S$ has a completely simple minimal ideal, say $J.$  Thus, there exists a $\j$-chain $(a,b,c)$ in $S$ where $c\in J.$  Now, there exist $s,t\in S^1$ such that $b=sat.$  Then, as $b\notin J$, we have $sa,at\notin J,$ and then
$$a\geql sa\gl csa\quad\text{and}\quad a\geqr at\gr atc.$$
Since $\HL(S)=\HR(S)=2,$ it follows that $a\,\l\,sa$ and $a\,\r\,at,$ so that there exist $u,v\in S^1$ such that $a=usa=atv.$  But then we have $a=usatv=ubv,$ contradicting the fact that $a\gj b.$  Thus $\HR(S)=\HJ(S).$

We now prove the equivalence of (1), (2) and (3).  If (1) holds, then $S$ is stable by Lemma \ref{lem:stable}, and then (2) follows from the first part of this proposition (just proved). That (2) implies (3) follows immediately from Lemma \ref{lem:gb}, and we have (3) implies (1) by Propositions \ref{prop:stable}(3) and \ref{prop:HK=1}.
\end{proof}

We conclude this section by exhibiting a pair of examples of semigroups to demonstrate that the conditions in Propositions \ref{prop:2,3} and \ref{prop:HK=2} are necessary.

Our first example is a semigroup with $\r$-height 3 and infinite $\j$-height.  This provides a negative answer to \cite[Open Problem 5.1]{Miller}, which asks whether there is a general upper bound for the $\j$-height of a semigroup in terms of its $\r$-height.

\begin{ex}\label{ex:R3 Jinf}
Let $S$ be any right simple semigroup that is not completely simple (such as a Baer-Levi semigroup).  Then $\HR(S)=1$ and $S$ has infinite $\l$-height.  Let ${U=S\cup(S\times S)\cup\{0\},}$ and define a multiplication on $U,$ extending that of $S,$ by
$$(a,b)c=(a,bc),\quad c(a,b)=(ca,b)\quad\text{and}\quad(a,b)(c,d)=(a,b)0=0(a,b)=0^2=0$$
for all $a,b,c,d\in S.$  It is straightforward to show that $U$ is a semigroup under this multiplication.

It is easy to show that the poset the $\r$-classes of $U$ is given as follows: $\{0\}$ is the minimum $\r$-class; $S$ is the maximum $\r$-class; the remaining $\r$-classes are the sets $\{a\}\times S$ ($a\in S$), and these all lie between $\{0\}$ and $S,$ and are pairwise incomparable.  It follows that $\HR(U)=3.$

It is also straightforward to show that the poset the $\j$-classes of $U$ is given as follows: $\{0\}$ is the minimum $\j$-class; $S$ is the maximum $\j$-class; the remaining $\j$-classes are the sets $L_a\times S$ (where $a\in S,$ and $L_a$ denotes the $\l$-class of $a$ in $S$), and $L_a\times S\leq L_b\times S$ if and only if $L_a\leq L_b$.  Thus, the poset of $\j$-classes of $U$ is isomorphic to the poset of $\l$-classes of $S$ with minimum and maximum elements adjoined.  Since $\HL(S)$ is infinite, we conclude that $\HJ(U)$ is infinite.
\end{ex}

Next, we show that it is possible for a (necessarily stable) semigroup to have $\h$-height 2 but infinite $\l$-, $\r$- and $\j$-heights.  In order to construct an example of such a semigroup, we first recall the notion of a Rees quotient.
For an ideal $I$ of $S,$ the {\em Rees quotient of $S$ by $I$}, denoted by $S/I,$ is the semigroup $(S{\setminus}I)\cup\{0\},$ where $0\notin S{\setminus}I,$ with multiplication given by
$$a\cdot b=\begin{cases}
ab&\text{ if }a,b,ab\in S{\setminus}I,\\
0&\text{ otherwise.}
\end{cases}$$


\begin{ex}\label{ex:h=2}
Let $X$ be any infinite set, and let $F$ denote the free semigroup on $X,$ i.e.\ the set of all non-empty words over $X.$  Let $I$ be the set of all words over $X$ in which some letter appears at least twice; that is,
$$I=\{w\in F : w=uxvxz\text{ for some }x\in X\text{ and }u,v,z\in F^1\}.$$
Then $I$ is an ideal of $F.$  Let $S$ be the Rees quotient $F/I.$  We note that $S$ is $\j$-trivial (i.e.\ $\j$ is the equality relation on $S$).  Choose distinct elements $x_i\in X$ ($i\in\N$).  Then $(x_1,x_1x_2,x_1x_2x_3,\dots)$ is both an $\r$-chain and a $\j$-chain of $S,$ so that $\HR(S)$ and $\HJ(S)$ are infinite.  Similarly, $\HL(S)$ is infinite.  We claim that $\HH(S)=2.$  Indeed, let $u,v\in S$ with $u\gh v.$  We need to show that $v=0.$  Now, we have $v=wu=uw'$ for some $w,w'\in S.$  If $w=0$ or $w'=0,$ we are done, so assume that $w,w'\in F{\setminus}I.$  If $|w|\leq|u|,$ then, since $wu=uw'$, we have $u=wz$ for some $z\in F^1$, and hence $v=wwz=0.$  If $|w|>|u|,$ then $w=uz'$ for some $z'\in F$, and hence $v=uz'u=0.$  Thus, in either case, we have $v=0,$ as required.
\end{ex}

\section{Quotients and Ideal Extensions}\label{sec:quotients}

In this section we investigate the relationship between the $\k$-heights of a semigroup $S$ and those of certain quotients of $S.$  Our main purpose here is to establish some critical machinery for proving the main results of the paper, appearing in Section \ref{sec:main}.



\begin{prop}\label{prop:quotient}
Let $S$ be a semigroup and let $\rho$ be a congruence on $S.$
\begin{enumerate}[leftmargin=*]
\item For each $\k\in\{\l,\r,\j\}$ we have $\HK(S)\geq\HK(S/\rho).$
\item For each $\k\in\{\l,\r,\j\},$ if $\leqk\cap\,\rho=\k\cap\rho$ then $\HK(S)=\HK(S/\rho)$.
\item If $S$ is left stable and $\leql\cap\,\rho\subseteq\j,$ then $\HL(S)=\HL(S/\rho).$
\item If $S$ is right stable and $\leqr\cap\,\rho\subseteq\j,$ then $\HR(S)=\HR(S/\rho).$ 
\end{enumerate}
\end{prop}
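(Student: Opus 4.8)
The plan is to prove (1) and (2) directly and then deduce (3) and (4) from (2). Write $\pi\colon S\to S/\rho$ for the quotient homomorphism and $\bar x$ for $\pi(x)=x\rho$, and extend $\pi$ to a monoid homomorphism $S^1\to(S/\rho)^1$ by sending the adjoined identity of $S^1$ to that of $(S/\rho)^1$, so that $\pi^{-1}(S/\rho)=S$ inside $S^1$; also fix a section of $\pi$ on $S$, so that every element of $(S/\rho)^1$ lifts to $S^1$.

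For (1) I would lift $\k$-chains from $S/\rho$ to $S$. Given a $\k$-chain $\bar a_1\gk\cdots\gk\bar a_m$ in $S/\rho$, I construct $c_1,\dots,c_m\in S$ with $\bar c_i=\bar a_i$ and $c_{i+1}\leqk c_i$ for all $i$: set $c_1=a_1$, and, given $c_i$ with $\bar c_i=\bar a_i$, lift to $S^1$ the element(s) of $(S/\rho)^1$ witnessing $\bar a_{i+1}\leqk\bar a_i$ and multiply $c_i$ by them on the left, on the right, or on both sides according to whether $\k$ is $\r$, $\l$ or $\j$, obtaining $c_{i+1}$; then $\bar c_{i+1}=\bar a_{i+1}\in S/\rho$ forces $c_{i+1}\in S$, and $c_{i+1}\leqk c_i$ holds by construction. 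The sequence $(c_1,\dots,c_m)$ is then a $\k$-chain in $S$: the required comparabilities are built in, while $c_i\leqk c_{i+1}$ for some $i$ would project to $\bar a_i\leqk\bar a_{i+1}$, contradicting strictness of the original chain. Hence every $\k$-chain of length $m$ in $S/\rho$ gives one of length $m$ in $S$, so $\HK(S)\ge\HK(S/\rho)$ (with nothing to prove if $\HK(S)=\infty$).

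For (2), by (1) it remains to show $\HK(S)\le\HK(S/\rho)$; I would prove that a $\k$-chain $(a_1,\dots,a_m)$ in $S$ projects to a $\k$-chain of the same length in $S/\rho$. As $\pi$ is a homomorphism it preserves $\leqk$, so $\bar a_{i+1}\leqk\bar a_i$ holds automatically; it therefore suffices to rule out $\bar a_i\leqk\bar a_{i+1}$ for each $i$. Suppose this held for some $i$. Lifting the witnessing product yields $b\in S$ with $b\,\rho\,a_i$ and $b\leqk a_{i+1}$, and combining with $a_{i+1}\leqk a_i$ from the chain gives $b\leqk a_i$; hence $(b,a_i)\in\leqk\cap\rho=\k\cap\rho$ by hypothesis, so $b\,\k\,a_i$, and therefore $a_i\leqk b\leqk a_{i+1}$, contradicting $a_{i+1}\lk a_i$. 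So $(\bar a_1,\dots,\bar a_m)$ is a $\k$-chain in $S/\rho$, giving $m\le\HK(S/\rho)$, and with (1) we obtain equality. I expect this paragraph to contain the main obstacle: the trick is not to compare $\bar a_i$ with $\bar a_{i+1}$ directly, but to pass through the lifted element $b$ and use the chain relation $a_{i+1}\leqk a_i$ to situate $b$ below $a_i$, which is exactly what makes the hypothesis $\leqk\cap\rho=\k\cap\rho$ applicable.

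Finally, (3) and (4) reduce to (2). For (3) I would check that left stability together with $\leql\cap\rho\subseteq\j$ forces $\leql\cap\rho=\l\cap\rho$: the inclusion $\supseteq$ is automatic since $\l\subseteq\leql$; conversely, if $x\leql y$ and $x\,\rho\,y$, then $x\,\j\,y$ by hypothesis, so $(x,y)\in\leql\cap\j=\l$ by left stability, whence $(x,y)\in\l\cap\rho$. Applying (2) with $\k=\l$ gives $\HL(S)=\HL(S/\rho)$, and (4) is the left--right dual. Apart from the idea isolated in the previous paragraph, the remaining work is routine; the only mildly delicate point is the bookkeeping in (1) around the adjoined identities of $S^1$ and $(S/\rho)^1$ and the chosen section of $\pi$.
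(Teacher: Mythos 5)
Your proposal is correct and follows essentially the same route as the paper's proof: lifting chains along a section of the quotient map for (1), projecting chains and using the lifted witness $b$ together with $\leqk\cap\,\rho=\k\cap\rho$ to preserve strictness for (2), and the identity $\leql\cap\,\rho=(\leql\cap\,\j)\cap\rho=\l\cap\rho$ to reduce (3) and (4) to (2).
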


\begin{proof}
(1) We just consider the case $\k=\j.$  The proofs for $\l$ and $\r$ are similar but slightly more straightforward.

Let $T=S/\rho,$ and consider a $\j$-chain $(b_0,\dots,b_n)$ in $T.$  For $i\in\{1,\dots,n\},$ there exist $u_i,v_i\in T^1$ such that $b_i=u_ib_{i-1}v_i$.  If $u_i\in T,$ choose $s_i\in S$ such that $u_i=[s_i]_{\rho};$ otherwise, let $s_i=1.$  Likewise, if $v_i\in T,$ choose $t_i\in S$ such that $v_i=[t_i]_{\rho};$ otherwise, let $t_i=1.$  Now let $a_0\in S$ be such that $[a_0]_{\rho}=b_0$, and for $i\in\{1,\dots,n\}$ set $a_i=s_i\dots s_1a_0t_1\dots t_i$.  We then have $[a_i]_{\rho}=b_i$, and $a_0\geqj a_1\geqj\cdots\geqj a_n$ in $S$.  We cannot have $a_{i-1}\,\j\,a_i$ (in $S$), for that would imply that $b_{i-1}\,\j\,b_i$ in $T.$  Hence, we have a $\j$-chain $(a_0,\dots,a_n)$ in $S.$  Thus $\HJ(S)\geq\HJ(T).$

(2) Again, we just consider the case $\k=\j$.  By (1) we have $\HJ(S)\geq\HJ(S/\rho),$ so it remains to prove the reverse inequality.  So, consider $a,b\in S$ with $a\gj b.$  Then $[a]_{\rho}\geqj[b]_{\rho}$ in $S/\rho$.  Suppose that $[a]_{\rho}\,\j\,[b]_{\rho}$.  It then follows that $[a]_{\rho}=[sbt]_{\rho}$ for some $s,t\in S^1$.  But then, since $a\gj b\geqj sbt,$ we have $(a,sbt)\in\,\leqj\cap\,\rho$ and $(a,sbt)\not\in\j,$ contradicting the assumption.  Thus $[a]_{\rho}\gj[b]_{\rho}$.  We conclude that $\HJ(S)\leq\HJ(S/\rho),$ and hence $\HJ(S)=\HJ(S/\rho),$ as required.
 
We may now quickly prove (3); the proof of (4) is dual.  Since $\leql\cap\,\rho\subseteq\j,$ we have
$$\leql\cap\,\rho=(\leql\cap\,\rho)\cap\j=(\leql\cap\,\j)\cap\rho=\l\cap\rho,$$
where for the final equality we use the assumption that $S$ is left stable.  Thus, by (2), we have $\HL(S)=\HL(S/\rho).$
\end{proof}

It is possible for the $\h$-height of a semigroup $S$ to be less than the $\h$-height of a quotient $S/\rho$ of $S,$ even when $\leqh\cap\,\rho=\h\cap\rho,$ as demonstrated by the following example.

\begin{ex}\label{ex:h,quotient}
Consider the semigroup $S$ from Example \ref{ex:h=2}.  Recall that $S$ is the Rees quotient of the free semigroup $F$ on an arbitrary infinite set $X$ by the ideal $I$ of all words with multiple appearences of the same letter, and that $\HH(S)=2.$
 
For $w\in S{\setminus}\{0\}$, let $C(w)$ denote the set of elements of $X$ appearing in $w.$  We define a relation $\rho$ on $S$ by $u\,\rho\,v$ if and only if either $u=v=0$ or $u,v\in F\backslash I$ with $C(u)=C(v).$  It is straightforward to prove that $\rho$ is a congruence.  In fact, $\rho$ is the smallest congruence on $S$ such that the resulting quotient is commutative.  Since $u\lh v$ if and only if $u=0,$ and $[0]_{\rho}=\{0\},$ it follows that $\leqh\cap\,\rho=\h\cap\rho.$  It is elementary that, for $u,v\in F\backslash I,$ we have $[u]_{\rho}\geqh[v]_{\rho}$ if and only if $C(u)\subseteq C(v).$  Thus, choosing distinct $x_i\in X$ ($i\in\N$), we have an $\h$-chain $([x_1]_{\rho},[x_1x_2]_{\rho},[x_1x_2x_3]_{\rho},\dots),$ and hence $S/\rho$ has infinite $\h$-height.
\end{ex}


In the remainder of this section we focus on Rees quotients, which were introduced in Section \ref{sec:small}.  Although we defined Rees quotients without reference to congruences, they do in fact arise from congruences.  Specifically, for an ideal $I$ of $S,$ the Rees quotient $S/I$ is isomorphic to the quotient of $S$ by the {\em Rees congruence} $\rho_I=\{(a,a) : a\in S{\setminus}I\}\cup(I\times I).$

\begin{prop}\label{prop:RQ}
Let $S$ be a semigroup and let $\k\in\{\l,\r,\j\}.$  
\begin{enumerate}[leftmargin=*]
\item For any ideal $I$ of $S,$ we have $\HK(S)\geq\HK(S/I).$  
\item If $S$ has minimal $\k$-classes, and $J$ is the minimal ideal of $S,$ then $\HK(S)=\HK(S/J).$
\item If $S$ has a completely simple minimal ideal $J,$ then $\HK(S)=\HK(S/J).$
\end{enumerate}
\end{prop}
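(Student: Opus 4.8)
The plan is to prove the three parts in order, with part (1) being essentially a special case of Proposition \ref{prop:quotient}(1), and parts (2) and (3) following from part (2) of Proposition \ref{prop:quotient} once we verify the relevant compatibility condition for the Rees congruence $\rho_J$. For part (1), since $S/I \cong S/\rho_I$ where $\rho_I$ is the Rees congruence, we simply apply Proposition \ref{prop:quotient}(1) directly; no further work is needed.

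For part (2), I would invoke Proposition \ref{prop:quotient}(2) with $\rho = \rho_J$, so it suffices to check that $\leqk \cap\, \rho_J = \k \cap \rho_J$ for $\k \in \{\l,\r,\j\}$. The nontrivial pairs in $\rho_J$ are those within $J \times J$ together with the diagonal, so it is enough to show that any two $\leqk$-comparable elements of $J$ are in fact $\k$-related. Since $J$ is the minimal ideal, it is a simple subsemigroup (by the discussion following Lemma \ref{lem:CSminideal}, or by \cite[Theorem 1.1]{Clifford:1948}), hence has $\j$-height $1$, which immediately handles $\k = \j$. For $\k \in \{\l, \r\}$: the hypothesis that $S$ has minimal $\k$-classes means (by Lemma \ref{lem:minideal}) that $J$ is the union of the minimal $\k$-classes of $S$; two elements of $J$ that are $\leqk$-comparable in $S$ are comparable minimal $\k$-classes, hence equal, so they are $\k$-related. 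This gives $\leqk \cap\, \rho_J \subseteq \k$, and the reverse containment is trivial, so Proposition \ref{prop:quotient}(2) applies and yields $\HK(S) = \HK(S/J)$.

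For part (3), if $J$ is a completely simple minimal ideal then in particular $J$ possesses minimal $\l$-classes and minimal $\r$-classes, and (being simple) a minimal $\j$-class; these are precisely the minimal $\k$-classes of $S$ for each $\k \in \{\l,\r,\j\}$, since $J$ is an ideal lying below everything. Thus the hypothesis of part (2) is satisfied, and part (3) follows at once. I expect no real obstacle here: the only point requiring a moment's care is the bookkeeping that "minimal $\k$-classes of the ideal $J$" coincide with "minimal $\k$-classes of $S$" — this is because $J$ being an ideal forces every $\k$-class of $S$ inside $J$ to be $\leqk$-below every $\k$-class of $S$, so being minimal in $J$ and minimal in $S$ are the same, and conversely a minimal $\k$-class of $S$ must lie in the minimal ideal $J$ by Lemma \ref{lem:minideal} (for $\k=\j$ directly, and for $\k\in\{\l,\r\}$ because the union of minimal $\k$-classes equals the minimal ideal).
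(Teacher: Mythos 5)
Your overall route is the same as the paper's: part (1) via Proposition \ref{prop:quotient}(1) applied to the Rees congruence, part (2) by verifying $\leqk\cap\,\rho_J=\k\cap\,\rho_J$ and invoking Proposition \ref{prop:quotient}(2), and part (3) by reduction to part (2). Parts (1) and (2) are fine; the only quibble in (2) is that Lemma \ref{lem:minideal} is stated under the hypothesis that $\HK(S)$ is finite, which you are not assuming --- what you actually need is the unconditional fact from Section \ref{sec:prelim} (quoting \cite[Theorem 2.1]{Clifford:1948}) that if $S$ has minimal $\l$/$\r$-classes then its minimal ideal is their union.

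In part (3), however, the justification you give for the bookkeeping step is false as stated. You claim that ``$J$ being an ideal forces every $\k$-class of $S$ inside $J$ to be $\leqk$-below every $\k$-class of $S$.'' This is true for $\k=\j$ (the minimal ideal is the minimum $\j$-class), but fails for $\k\in\{\l,\r\}$: in a $2\times2$ rectangular band $S$ (so that $J=S$ is a completely simple minimal ideal) the two $\l$-classes are incomparable, so neither lies $\leql$-below the other. The correct reasoning runs the other way: since ideals are downward closed under $\leqk$, any $\k$-class lying $\leqk$-below a $\k$-class contained in $J$ is itself contained in $J$; and because $J$ is completely simple, any two elements of $J$ that are $\leql$-comparable (resp.\ $\leqr$-comparable) in $S$ are $\l$-related (resp.\ $\r$-related), so the $\l$- and $\r$-classes of $S$ contained in $J$ are pairwise incomparable and hence all minimal. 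Equivalently, one can simply cite \cite[Theorem~3.2]{Clifford:1948}, as the paper does in the proof of Lemma \ref{lem:CSminideal}: a completely simple minimal ideal is the union of the minimal $\l$-classes and of the minimal $\r$-classes of $S$. With that repair, your part (3) reduces to part (2) exactly as in the paper.
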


\begin{proof}
(1) This is an immediate corollary of Proposition \ref{prop:quotient}(1).

(2) Recalling that $J$ is the union of the minimal $\k$-classes, it is straightforward to show that $\leqk\cap\,\rho_J=\k\cap\,\rho_J$.  Hence, using Proposition \ref{prop:quotient}(2) and the fact that $S/\rho_J\cong S/J,$ we have $\HK(S)=\HK(S/J)$.

(3) This follows from (2), upon recalling that a completely simple minimal ideal is the union of all the minimal $\k$-classes.
\end{proof}

For $\k\in\{\l,\r,\h,\j\},$ and for an ideal $I$ of $S,$ we define $H_{\mathcal{K}}^S(I)$ as follows.  If there is a finite bound on the sizes of chains of $\k$-classes of $S$ contained in $I,$ then $H_{\mathcal{K}}^S(I)$ is the minimum such bound; otherwise $H_{\mathcal{K}}^S(I)=\infty.$  Observing that [$b\in I, a\leqk b\Rightarrow a\in I$], it is clear that any $\k$-chain of $S$ splits into two subchains by restricting to $I$ and to $S{\setminus}I,$ and the latter subchain is a $\k$-chain of $S/I$ to which $0$ can be appended.  It follows that
\begin{equation}\label{eq}
\HK(S)\leq\HKS(I)+\HK(S/I)-1. 
\tag{$\ast$}
\end{equation}

For a semigroup $S$ with zero 0, the {\em left socle} of $S$ is the union of $\{0\}$ and all the 0-minimal $\l$-classes of $S.$  It turns out that the left socle of $S$ is a two-sided ideal of $S$; for a proof of this fact and for more information about the left socle, see \cite[Section 6.3]{Clifford:1967}.

\begin{thm}\label{thm:leftsocle}
Let $S$ be a semigroup with zero, and let $I$ be the left socle of $S.$
\begin{enumerate}[leftmargin=*]
\item $\HL(S)$ is finite if and only if $\HL(S/I)$ is finite, in which case $\HL(S)=\HL(S/I)+1.$
\item If $S$ is right stable, then $\HR(S)$ is finite if and only if $\HR(S/I)$ is finite, in which case $\HR(S)\leq2\HR(S/I)+1.$
\item If $S$ is right stable, then $\HJ(S)$ is finite if and only if $\HJ(S/I)$ is finite, in which case 
$$\HJ(S)\leq\HR(S/I)+\HJ(S/I)+1\leq2\HJ(S/I)+1.$$
\end{enumerate}
\end{thm}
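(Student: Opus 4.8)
The plan is to treat the three parts by analysing the left socle $I$ as an ideal whose $\k$-classes have restricted structure, and then combining this with the inequality \eqref{eq} and with the quotient results of Section \ref{sec:quotients}. For part (1), the key observation is that every nonzero $\l$-class contained in $I$ is $0$-minimal, so the only $\l$-chains entirely inside $I$ have length at most $2$ (a nonzero $0$-minimal $\l$-class above $\{0\}$); hence $\HL^S(I)\le 2$. Feeding this into \eqref{eq} gives $\HL(S)\le \HL(S/I)+1$; for the reverse inequality, one appends to any $\l$-chain of $S/I$ (lifted to $S{\setminus}I$, discarding $0$ if present) a nonzero element of $I$ at the bottom — here I must check that a nonzero element of $I$ is strictly $\l$-below the lifted chain, which follows because the lifted elements lie outside $I$ and $I$ is an ideal closed downwards under $\leql$. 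The finiteness equivalence is then immediate since the two heights differ by exactly $1$. The first part should be essentially routine.

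For parts (2) and (3), the main structural input is that, since $S$ is right stable, within the ideal $I$ each nonzero $\j$-class is a single $\d = \j$-class which, being $0$-minimal in a right stable semigroup, is completely $0$-simple (using the socle theory from \cite[Section 6.3]{Clifford:1967} together with the stability characterisation \cite[Proposition 15]{East:2020}); in particular each such $\j$-class contains only finitely many — indeed, at most as many as needed — incomparable $\r$-classes, and any $\r$-chain or $\j$-chain inside $I$ has length at most $2$. Thus $\HR^S(I)\le 2$ and $\HJ^S(I)\le 2$. Plugging $\HR^S(I)\le 2$ into \eqref{eq} would only give $\HR(S)\le \HR(S/I)+1$, which is better than the claimed $2\HR(S/I)+1$, so I suspect the intended argument is subtler: the bound $2\HR(S/I)+1$ likely comes not from restricting $\r$-chains to $I$ wholesale but from a more careful analysis in which each $\r$-class of $S/I$ can "split" upon passing to $S$ because elements of $I$ can be $\r$-below elements outside $I$ in more than one layer. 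The honest plan is therefore to track, for an $\r$-chain $(a_0,\dots,a_m)$ of $S$ with bottom element in $I$, how the portion lying in $S{\setminus}I$ projects to an $\r$-chain of $S/I$, and to bound the number of consecutive elements of the original chain that can map to a single $\r$-class of $S/I$; right stability should force this multiplicity to be at most $2$ once one element of the block lies in $I$, giving roughly $\HR(S)\le 2\HR(S/I)+1$.

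For part (3), the plan is to bootstrap on part (2): a $\j$-chain of $S$ restricted to $S{\setminus}I$ projects to a $\j$-chain of $S/I$, and the finitely many "extra" elements that collapse in the projection are controlled by the one-sided structure, specifically by $\HR(S/I)$ via the relationship $\j \subseteq\, \leqr$-comparability within completely $0$-simple pieces and Proposition \ref{prop:stable}(2) (every $\r$-chain is a $\j$-chain in a right stable semigroup). This should yield $\HJ(S)\le \HR(S/I)+\HJ(S/I)+1$; the second inequality $\HR(S/I)\le \HJ(S/I)$ is then exactly Proposition \ref{prop:stable}(2) applied to $S/I$ — but I must first verify that $S/I$ is again right stable, which follows from Proposition \ref{prop:quotient}(4) or directly from the fact that right stability passes to Rees quotients (since $\leqr$, $\j$ and hence right stability are reflected along the quotient map away from $0$). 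The finiteness equivalences in (2) and (3) are then automatic from the finite bounds.

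The main obstacle I anticipate is getting the constant in part (2) exactly right: naively $\HR^S(I)\le 2$ plus \eqref{eq} overshoots (gives a stronger bound than claimed), so the factor-of-$2$ growth must be an artefact of a coarser but more robust argument — perhaps the authors do not use \eqref{eq} here at all but instead argue that each $\r$-class of $S/I$ lifts to at most two $\r$-classes of $S$ (one "generic" and one meeting $I$), and that an $\r$-chain through $S$ alternates between these. Pinning down precisely why an $\r$-class of $S/I$ cannot lift to three or more $\r$-classes of $S$ — i.e. why, once you are $\r$-inside $I$, right stability and complete $0$-simplicity of the $0$-minimal $\j$-classes leave no room for a further strict $\r$-step before hitting $0$ — is the crux, and I would spend most of the effort there.
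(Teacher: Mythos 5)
Part (1) of your plan is essentially the paper's argument and is fine, except that your justification for appending a nonzero element below the lifted chain is not quite the right one: downward closure of $I$ under $\leql$ does not \emph{produce} the required element. What you actually need is that $b_n\notin I$ means the $\l$-class of $b_n$ is not $0$-minimal, so by definition some nonzero $c$ satisfies $b_n\gl c$; that is how the paper gets $\HL(S)\geq\HL(S/I)+1$.

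For parts (2) and (3) there is a genuine gap. Your central structural claim --- that every nonzero $\j$-class inside $I$ is $0$-minimal and hence that $\r$-chains and $\j$-chains inside $I$ have length at most $2$, i.e.\ $\HRS(I)\leq 2$ and $\HJS(I)\leq 2$ --- is false. The left socle is a union of $0$-minimal $\l$-classes, but nothing forces its $\r$-classes or $\j$-classes to be $0$-minimal, and the paper's own Construction \ref{con:U} is a counterexample: in $\U(S)$ the left socle is $\{x_s:s\in S^1\}\cup\{z\}$, and $(x_1,x_2,\dots,x_{n+1})$ is an $\r$-chain (indeed the $\j$-order on the $x_s$ mirrors the $\r$-order on $S$), so $\r$-chains inside the left socle can be as long as $\HR(S/I)+2$. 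This is exactly why $\HR(\U(S))=2\HR(S)+1$ and why the stated bound $2\HR(S/I)+1$ is sharp. You correctly sensed that your premise would yield a bound ($\HR(S/I)+1$) stronger than the one claimed, but you misdiagnosed the tension as the authors using a ``coarser'' argument, rather than as evidence that $\HRS(I)\leq2$ cannot be true. The correct key lemma is $\HRS(I)\leq\HR(S/I)+2$ (and likewise $\HJS(I)\leq\HR(S/I)+2$), which combined with \eqref{eq} gives the stated bounds. Its proof is a contradiction argument quite different from your fallback plan of bounding the ``multiplicity'' of the projection to $S/I$: given an $\r$-chain $(a_0,\dots,a_n,0)$ inside $I$ with $a_i=a_1t_i$, one splits into cases according to whether $t_n$ lies in $I$; if it does, $0$-minimality of the $\l$-classes in $I$ together with right stability forces $a_0\,\r\,a_1$, and if it does not, the $t_i$ descend to an $\r$-chain of $S/I$ that is too long unless some $t_i\,\r\,t_{i+1}$, which (since $\r$ is a left congruence) forces $a_i\,\r\,a_{i+1}$. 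Part (3) then reduces a $\j$-chain inside $I$ to an $\r$-chain inside $I$ by showing the left multipliers $b_i=s_i\cdots s_1a_1$ are all $\l$-related (again by $0$-minimality of $\l$-classes in $I$), so the right multipliers $c_i=a_1t_1\cdots t_i$ must strictly descend in the $\r$-order. Your bootstrap for (3) is directionally reasonable but, resting on the same false premise and with no concrete mechanism, does not constitute a proof.
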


\begin{proof}
We may unambiguously let 0 denote both the zero of $S$ and of $S/I.$

(1) For any $\l$-chain $(a_1,\dots,a_n,0)$ in $S,$ we have $a_i\in S{\setminus}I$ for each \mbox{$i\in\{1,\dots,n-1\}$} (since $a_i\gl a_n\neq0$), and hence there is an $\l$-chain $(a_1,\dots,a_{n-1},0)$ in $S/I.$  Thus \mbox{$\HL(S)\leq\HL(S/I)+1$}. 

Now consider an $\l$-chain $(b_1,\dots,b_n,0)$ in $S/I.$  Then $b_n\notin I,$ so, by definition, there exists some $c\in S{\setminus}\{0\}$ with $b_n\gl c.$  Hence, we have an $\l$-chain $(b_1,\dots,b_n,c,0)$ in $S.$  Thus $\HL(S)\geq\HL(S/I)+1,$ and hence $\HL(S)=\HL(S/I)+1.$

(2) By Proposition \ref{prop:RQ}(1), if $\HR(S)$ is finite then so is $\HR(S/I).$  Suppose then that $\HR(S/I)$ is finite.  We shall prove that $\HRS(I)\leq\HR(S/I)+2.$  Then, using (\ref{eq}), we have
$$\HR(S)\leq(\HR(S/I)+2)+\HR(S/I)-1=2\HR(S/I)+1.$$
Assume for a contradiction that $\HRS(I)>\HR(S/I)+2.$  Then, with {$n=\HR(S/I)+1\,(\geq2),$} there is an $\r$-chain $(a_0,a_1,\dots,a_n,0)$ of $S$ where $a_i\in I$ for all $i\in\{0,\dots,n\}.$
Let $s_i\in S$ be such that $a_i=a_{i-1}s_i$ ($1\leq i\leq n$).  For $i\in\{2,\dots,n\},$ let $t_i=s_2\dots s_i$, so that $a_i=a_1t_i$.  Note that $t_n\neq0$, as $a_n\neq 0$.

Suppose first that $t_n\in I,$ i.e.\ $t_n$ belongs to a 0-minimal $\l$-class of $S.$  Since $a_n=a_1t_n\leql t_n$, it follows that $a_n\,\l\,t_n$.  Thus, there exists $x\in S^1$ such that $t_n=xa_n$, and hence
$$t_n=xa_1t_n=xa_1xa_n=xa_1xa_0s_1t_n.$$
It follows that $a_0\geql a_1xa_0\neq 0$.  Since $a_0\in I,$ we have $a_0\,\l\,a_1xa_0$, which, together with $a_0\gr a_1$, implies that $a_0\,\j\,a_1$.  But this contradicts the fact that $S$ is right stable.  

Now suppose that $t_n\in S{\setminus}I.$  Then we have a chain
$$t_2\geqr t_3\geqr\dots\geqr t_n\gr0$$
in $S/I.$  It cannot be the case that all the inequalities in this chain are strict, for then there would be an $\r$-chain of length $n$ in $S/I,$ contradicting the fact that $\HR(S/I)=n-1.$  Thus, there exists $i\in\{2,\dots,n-1\}$ such that $t_i\,\r\,t_{i+1}$.  But then, using the fact that $\r$ is a left congruence on $S,$ we have $a_i=a_1t_i\,\r\,a_1t_{i+1}=a_{i+1},$ contradicting the fact that $(a_0,\dots,a_n,0)$ is an $\r$-chain.  This completes the proof.

(3) By Proposition \ref{prop:RQ}(1), if $\HJ(S)$ is finite then so is $\HJ(S/I).$  Suppose then that $\HJ(S/I)$ is finite.  By assumption $S$ is right stable, which implies that $S/I$ is right stable.  Therefore, by Proposition \ref{prop:stable}(2), we have $\HR(S/I)\leq\HJ(S/I).$  The second inequality in the statement immediately follows.  To prove the first inequality, we show that $\HJS(I)\leq\HR(S/I)+2,$ and the inequality then follows from (\ref{eq}). 

Consider a $\j$-chain $(a_1,\dots,a_n,0)$ of $S$ where $a_i\in I$ for all $i\in\{1,\dots,n\}.$  We need to show that $n\leq\HR(S/I)+1.$  For each $i\in\{1,\dots,n-1\},$ there exist $s_i,t_i\in S^1$ such that $a_{i+1}=s_ia_it_i$, so $a_{i+1}=s_i\dots s_1a_1t_1\dots t_i$.  For $i\in\{0,\dots,n-1\},$ let $b_{i+1}=s_i\dots s_1a_1$ and $c_{i+1}=a_1t_1\dots t_i$, interpreting $b_1=c_1=a_1$.  Then $b_i,c_i\in I{\setminus}\{0\},$ and we have
$$b_1\geql\cdots\geql b_n\gl0\qquad\text{and}\qquad c_1\geqr\cdots\geqr c_n\gr0.$$
Since the elements of $I{\setminus}\{0\}$ belong to 0-minimal $\l$-classes, it follows that all the $b_i$ are $\l$-related.  Now, aiming for a contradiction, suppose that $c_i\,\r\,c_{i+1}$ for some $i\in\{1,\dots,n-1\}.$  Then there exist $x,y\in S^1$ such that $b_i=xb_{i+1}$ and $c_i=c_{i+1}y$.  But then 
\begin{align*}
a_i&=s_{i-1}\dots s_1a_1t_1\dots t_{i-1}=b_it_1\dots t_{i-1}=xb_{i+1}t_1\dots t_{i-1}=xs_i\dots s_1a_1t_1\dots t_{i-1}\\
&=xs_i\dots s_1c_i=xs_i\dots s_1c_{i+1}y=xs_i\dots s_1a_1t_1\dots t_iy=xa_{i+1}y,
\end{align*}
contradicting the fact that $a_i\gj a_{i+1}$.  We conclude that $(c_1,\dots,c_n,0)$ is an $\r$-chain of $S$ contained in $I.$  We have already established, in the proof of (2), that {$\HRS(I)\leq\HR(S/I)+2,$} so we conclude that $n\leq\HR(S/I)+1,$ as required.
\end{proof}

We note that one can of course dually define the {\em right socle} of a semigroup with zero, and Theorem~\ref{thm:leftsocle} has an obvious counterpart in terms of the right socle.

To conclude this section we shall prove that the upper bounds established in parts (2) and (3) of Theorem \ref{thm:leftsocle} are sharp.  To this end, we introduce a construction in the form of a specific ideal extension of a null semigroup.  (A semigroup $N$ is {\em null} if it contains an element 0 such that $N^2=\{0\}$.)

\begin{con}\label{con:U}
Let $S$ be a semigroup with zero $z.$  Let $\{x_s : s\in S^1\}$ be a set disjoint from $S$ in one-to-one correspondence with $S^1$, and let $\U(S)=S\cup\{x_s : s\in S^1\}.$  Define a multiplication on $\U(S),$ extending that on $S,$ by
$$ax_s=x_s,\quad x_sa=x_{sa}\quad\text{and}\quad x_sx_t=x_z$$
for all $a\in S$ and $s,t\in S^1$.  It is straightforward to show that $\U(S)$ is a semigroup with zero $x_z$.
\end{con}

We focus on this construction for the rest of the section.  We first prove some elementary facts about the structure of $\U(S)$, and then describe its $\l$- and $\r$-heights in terms of those of $S$.

\begin{lem}\label{lem:U}
Let $S$ be a semigroup with zero $z,$ and let $U=\U(S).$ 
\begin{enumerate}[leftmargin=*] 
\item $S$ is a subsemigroup of $U,$ and $\{x_s : s\in S^1\}$ is a null semigroup and an ideal of $U.$
\item The left socle of $U$ is $I=\{x_s : s\in S^1\}\cup\{z\},$ and $U/I\cong S.$
\item If $S$ is right stable then so is $U.$
\end{enumerate}
\end{lem}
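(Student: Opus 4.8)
The plan is to verify the three assertions in turn, each of which is a matter of unwinding Construction \ref{con:U}. For part (1), I would first check that $S$ is closed under the multiplication of $U$: this is immediate since the multiplication on $U$ was defined to extend that on $S$, and products of the form $ab$ with $a,b\in S$ stay in $S$. Then $\{x_s:s\in S^1\}$ is an ideal because every product involving at least one $x_s$ lands among the $x_t$'s (namely $ax_s=x_s$, $x_sa=x_{sa}$, $x_sx_t=x_z$), and it is null because any product of two such elements equals $x_z$, which is the zero of $U$. So $\{x_s:s\in S^1\}$ with the $x_sx_t=x_z$ rule is a null semigroup.

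For part (2), I would identify the $0$-minimal $\l$-classes of $U$. The zero of $U$ is $x_z$; write $I=\{x_s:s\in S^1\}$ (note $x_z\in I$, so $I$ already contains the zero, and the claimed left socle is exactly $I$). For any $x_s$ with $s\neq z$, I claim $\{x_s\}$ is a $0$-minimal $\l$-class: if $y\leql x_s$ with $y\neq x_z$, then $y=u x_s$ for some $u\in U^1$, and since $a x_s=x_s$ for $a\in S$, $x_t x_s=x_z$, and $1\cdot x_s=x_s$, the only possibility with $y\neq x_z$ is $y=x_s$; hence the only $\l$-class strictly below $\{x_s\}$ is $\{x_z\}$. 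Conversely I would check that no element of $S\setminus\{z\}$ lies in a $0$-minimal $\l$-class, or at least that the union of the $0$-minimal $\l$-classes together with the zero is exactly $I$ — this needs a short argument that elements of $S$ which generate $0$-minimal left ideals of $S$ do not do so in $U$, or are absorbed; here I should be a little careful and argue that $I$ is precisely the left socle, not merely contained in it. Finally, $U/I\cong S$: the Rees quotient collapses all of $I$ to a single zero, and what remains is $S$ with its original multiplication together with the zero $z$ of $S$ now playing the role of the quotient zero (one checks products $a\cdot b$ in $U/I$ agree with $ab$ in $S$ when $ab\notin I$, i.e.\ always, since $I\cap S=\{z\}$ and $z$ is already the zero of $S$), giving the isomorphism.

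For part (3), suppose $S$ is right stable; I must show $U$ is right stable, i.e.\ $\leqr\cap\,\j=\r$ on $U$. Take $a,b\in U$ with $a\leqr b$ and $a\,\j\,b$; I want $a\,\r\,b$. Split into cases by whether $a,b$ lie in $S$ or in $I$. If both lie in $I$: for $x_s,x_t$, note $x_s\leqr x_t$ forces (by the multiplication rules, $x_t a=x_{ta}$, $x_t x_u=x_z$) either $x_s=x_t$ or $x_s=x_z$, and similarly for $\j$; one checks $x_s\,\j\,x_t$ with $s,t\neq z$ implies $x_s=x_t$, so right stability holds trivially on $I$. If $a\in I$ and $b\in S$ (or vice versa): since $I$ is an ideal, $a\leqr b$ with $a\in I$ is possible, but then $b\leqr a\in I$ would be needed for $a\,\j\,b$, forcing $b\in I$, a contradiction unless both are in $I$ — so this mixed case cannot produce a genuine $\j$-relation, and the implication is vacuous. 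The remaining case is $a,b\in S$: here I would show that $a\leqr b$ in $U$ implies $a\leqr b$ in $S^1$ (the only extra right multipliers in $U^1$ are the $x_t$, and $b x_t=x_t\notin S$, so a witness $a=bu$ with $a\in S$ forces $u\in S^1$), and likewise $a\,\j\,b$ in $U$ restricts to $a\,\j\,b$ in $S$ (extra two-sided multipliers again push products out of $S$); then right stability of $S$ gives $a\,\r\,b$ in $S$, hence in $U$.

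The main obstacle I anticipate is part (2) — specifically, proving that the left socle of $U$ is \emph{exactly} $I$ and not larger, which requires ruling out that any element of $S\setminus\{z\}$ sits in a $0$-minimal $\l$-class of $U$. Adjoining the generators $x_s$ could in principle create new small left ideals; the key observation to push through is that for $a\in S\setminus\{z\}$ one has $x_1\ll_{\mathcal L} a$ in $U$ (since $x_1=1\cdot x_1$ and $x_1\leql a$? — actually one needs $x_1 = u a$ for some $u$, which fails, so instead the relevant point is that $\{a\}$ is not $0$-minimal because there are elements of $S$ strictly $\l$-below it coming from $S$'s own structure, or $a$ is itself above such). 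I would handle this by directly analysing which principal left ideals $U^1 a$ for $a\in S$ meet $I$ nontrivially and showing the $0$-minimal ones among all $\l$-classes of $U$ are precisely the singletons $\{x_s\}$, $s\neq z$.
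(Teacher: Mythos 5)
The central problem is in part (2): you have dropped $z$ from $I$. The lemma asserts that the left socle of $U$ is $\{x_s : s\in S^1\}\cup\{z\}$, and the element $z$ of $S$ genuinely belongs to it: in $U$ one has $U^1z=\{z,x_z\}$ (since $az=z$ for $a\in S$ and $x_tz=x_{tz}=x_z$), so $\{z\}$ is itself a $0$-minimal $\l$-class of $U$. Your ``converse'' step only rules out elements of $S\setminus\{z\}$, so you never decide the status of $z$, and with your $I=\{x_s:s\in S^1\}$ the claim that the left socle equals $I$ is simply false. The error propagates into the isomorphism: $U/I\cong S$ works precisely because $z\in I$, so that $U{\setminus}I=S{\setminus}\{z\}$ and the adjoined zero of the Rees quotient plays the role of $z$; with your $I$, the quotient would be $S$ with an extra zero adjoined below $z$, which is not isomorphic to $S$ in general. (Indeed you implicitly assume $I\cap S=\{z\}$ when checking the multiplication, contradicting your own definition of $I$.) As for the step you flag as the anticipated obstacle, the clean argument is short: for $v\in S{\setminus}\{z\}$ we have $v\gl z\gl x_z$ in $U$, so there are at least two $\l$-classes strictly below that of $v$, and hence $v$ does not lie in a $0$-minimal $\l$-class.

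Part (3) also rests on false structural claims about the ideal. It is not true that $x_s\leqr x_t$ forces $x_s\in\{x_t,x_z\}$: since $x_ta=x_{ta}$, we get $x_{ta}\leqr x_t$ for every $a\in S$, and in fact for $a,b\in S$ one has $x_a\leqr x_b$ if and only if $a\leqr b$ in $S$. Likewise $x_s\,\j\,x_t$ does not force $s=t$; for $s,t\in S$ it holds exactly when $s\,\r\,t$ in $S$. So right stability on the ideal is not ``trivial''; it does hold, but because $\leqj$ and $\leqr$ coincide on $\{x_a:a\in S\}$ (both amounting to $a\leqr b$ in $S$) and $\{x_1\}$ is a singleton $\j$-class, whence $\leqr\cap\,\j\,=\,\r$ there automatically. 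Your treatment of the remaining cases (the mixed case, and $a,b\in S$) matches the paper's argument and is fine, apart from writing $b\leqr a$ where you need $b\leqj a$.
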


\begin{proof}
(1) This follows immediately from the definition of the multiplication in $U.$  

(2) For each $u\in I$ we have $U^1u=\{u,x_z\},$ and hence $I$ is contained in the left socle of $U.$  For $v\notin I,$ we have $v\in S{\setminus}\{z\},$ and hence $v\gl z\gl x_z$ in $U,$ so that $v$ is not in the left socle of $U.$  Thus $I$ is the left socle of $U.$

Letting 0 denote the zero of $U/I,$ it is straightforward to see that there is an isomorphism $U/I\to S$ given by $s\mapsto s$ ($s\in S{\setminus}\{z\}$) and $0\mapsto z.$


(3) It is easy to show that the relations $\r$ and $\j$ on $U$ restricted to $S$ coincide with the corresponding relations on $S.$  Moreover, $\{x_1\}$ is a singleton $\j$-class of $U$, and for $a,b\in S$ we have $x_a\leqj x_b$ if and only if $x_a\leqr x_b$ if and only if $a\leqr b.$  Consequently, if $S$ is right stable then so is $U.$
\end{proof}

\begin{prop}\label{prop:U}
Let $S$ be a semigroup with zero $z,$ and let $U=\U(S).$  Then:
\begin{enumerate}[leftmargin=*]
\item $\HL(U)$ is finite if and only if $\HL(S)$ is finite, in which case $\HL(U)=\HL(S)+1.$
\item $\HR(U)$ is finite if and only if $\HR(S)$ is finite, in which case $\HR(U)=2\HR(S)+1.$
\end{enumerate}
\end{prop}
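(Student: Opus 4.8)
The plan is to deduce part~(1), together with the finiteness equivalence and the upper bound in part~(2), straight from the socle machinery of Theorem~\ref{thm:leftsocle}, and then to establish the reverse inequality in part~(2) by hand, exhibiting a long $\r$-chain in $U$. Throughout I would use Lemma~\ref{lem:U}(2), which identifies the left socle of $U$ as $I=\{x_s:s\in S^1\}\cup\{z\}$ and gives $U/I\cong S$.

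Part~(1) is then immediate: Theorem~\ref{thm:leftsocle}(1), applied to $U$ with left socle $I$, gives that $\HL(U)$ is finite if and only if $\HL(U/I)=\HL(S)$ is finite, in which case $\HL(U)=\HL(U/I)+1=\HL(S)+1$. For part~(2), the finiteness equivalence is equally cheap: if $\HR(U)<\infty$ then $\HR(S)=\HR(U/I)<\infty$ by Proposition~\ref{prop:RQ}(1); conversely, if $\HR(S)<\infty$ then $S$ is right stable by Lemma~\ref{lem:stable}, hence so is $U$ by Lemma~\ref{lem:U}(3), so Theorem~\ref{thm:leftsocle}(2) applies to $U$ and yields both $\HR(U)<\infty$ and $\HR(U)\le 2\HR(U/I)+1=2\HR(S)+1$.

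It remains to prove $\HR(U)\ge 2\HR(S)+1$ when $n:=\HR(S)$ is finite; this is the substantive step. First I would read the $\leqr$-preorder on $U$ off Construction~\ref{con:U}. For $a,b\in S$ one has $a\leqr b$ in $U$ if and only if $a\leqr b$ in $S$, since a witness of the form $x_v$ is impossible ($bx_v=x_v\notin S$). Every $x_s$ lies strictly $\leqr$-below every element of $S$: indeed $b\cdot x_s=x_s$ for $b\in S$, while no product $x_s u$ ($u\in U^1$) lies in $S$. For $s,t\in S$ one has $x_s\leqr x_t$ in $U$ if and only if $s\leqr t$ in $S$. Finally, $x_1$ lies strictly $\leqr$-above every $x_s$ with $s\in S$: indeed $x_1\cdot s=x_s$, while no product $x_s u$ equals $x_1$ when $s\in S$. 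Granting these, I would pick a longest $\r$-chain $(a_1,\dots,a_n)$ of $S$ and verify that
\[
(a_1,\dots,a_n,\ x_1,\ x_{a_1},\dots,x_{a_n})
\]
is an $\r$-chain of $U$: the initial block is an $\r$-chain of $U$ because the $\leqr$-preorders of $U$ and $S$ agree on $S$; the block $(x_1,x_{a_1},\dots,x_{a_n})$ is an $\r$-chain because $x_1\gr x_{a_1}$ and $x_{a_i}\gr x_{a_{i+1}}$ for each $i$ (the latter because $a_i\gr a_{i+1}$ in $S$ and $x_{(-)}$ reflects the order); and $a_n\gr x_1$ because $x_1=a_n\cdot x_1$ whereas $a_n$ is not $\leqr$-below any $x_t$. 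Since the consecutive relations are strict, this chain has $2n+1$ terms, so $\HR(U)\ge 2n+1$; together with the upper bound this gives $\HR(U)=2\HR(S)+1$.

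The only real labour is the elementary but slightly fiddly verification of the four clauses describing $\leqr$ on $U$ — above all, tracking when a product in $U$ lands in $S$ and when in $\{x_s:s\in S^1\}$, and checking that $x_1$ sits strictly above every other $x_s$ yet is incomparable to every element of $S$. Once that is in place the chain count is immediate, and I do not expect any genuine obstacle beyond this bookkeeping; I note in passing that the proposition also shows the bound in Theorem~\ref{thm:leftsocle}(2) to be sharp.
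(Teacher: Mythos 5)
Your proposal is correct and follows essentially the same route as the paper: part~(1) and the upper bound in part~(2) via Lemma~\ref{lem:U} and Theorem~\ref{thm:leftsocle}, and the lower bound via the explicit $(2n+1)$-term $\r$-chain $(a_1,\dots,a_n,x_1,x_{a_1},\dots,x_{a_n})$, which is exactly the chain constructed in the paper. The strictness verifications you sketch (in particular that $x_{a_i}\gr x_{a_{i+1}}$ reduces to $a_i\gr a_{i+1}$ because any witness $u$ with $x_{a_i}=x_{a_{i+1}}u$ must lie in $S$) match the paper's argument.
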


\begin{proof}
(1) This follows from Theorem \ref{thm:leftsocle}(1) and Lemma \ref{lem:U}(2).

(2) Assume that $\HR(S)=n\in\N.$  Then $S$ is right stable by Lemma \ref{lem:stable}, and hence $U$ is right stable by Lemma \ref{lem:U}(3).  Hence, by Theorem \ref{thm:leftsocle}(2) and Lemma \ref{lem:U}(2), we have $\HR(U)\leq2\HR(S)+1.$

Now, there exists an $\r$-chain $(a_1,\dots,a_{n-1},a_n=z)$ in $S.$  Since $U{\setminus}S$ is an ideal, it follows that $(a_1,\dots,a_{n-1},z)$ is an $\r$-chain of $U,$ and that $z\gr zx_1=x_1$ in $U.$  Write $x_{a_i}=x_{i+1}$ for each $i\in\{1,\dots,n\},$ then $x_1a_1=x_2,$ and clearly $x_1\notin a_1U^1,$ so $x_1\gr x_2$.  For each $i\in\{2,\dots,n\}$ there exists $t_i\in S$ such that $a_{i-1}t_i=a_i$, so $x_it_i=x_{i+1}$ and hence $x_i\geqr x_{i+1}$.  Suppose that $x_i\,\r\,x_{i+1}.$  Then $x_i=x_{i+1}u$ for some $u\in U.$  Since $x_{i+1}x_s=x_z$ for each $s\in S^1$ we have $u\in S.$  But then $a_i=a_{i+1}u,$ contradicting the fact that $a_i\gr a_{i+1}$.  Thus $x_i\gr x_{i+1}$.  We conclude that there is an $\r$-chain
$$(a_1,\dots,a_n,x_1,x_2\dots,x_{n+1})$$
in $U,$ so that $\HR(U)\geq2n+1.$  Thus $\HR(U)=2n+1.$
\end{proof}

We may now quickly deduce that the bounds given in parts (2) and (3) of Theorem \ref{thm:leftsocle} can be attained.  Indeed, take any semigroup $S$ with zero such that $\HR(S)=\HJ(S)<\infty,$ let $U=\U(S),$ and let $I$ be the left socle of $U.$  Then $S\cong U/I$ by Lemma \ref{lem:U}.  Thus, using Propositions \ref{prop:U}(2), \ref{prop:stable}(2) and Theorem \ref{thm:leftsocle}(3), we have
$$2\HJ(U/I)+1=2\HR(U/I)+1=\HR(U)\leq\HJ(U)\leq2\HJ(U/I)+1,$$
implying that $\HR(U)=\HJ(U)=2\HR(U/I)+1=2\HJ(U/I)+1.$   See Figure \ref{figure:U} for an illustration.

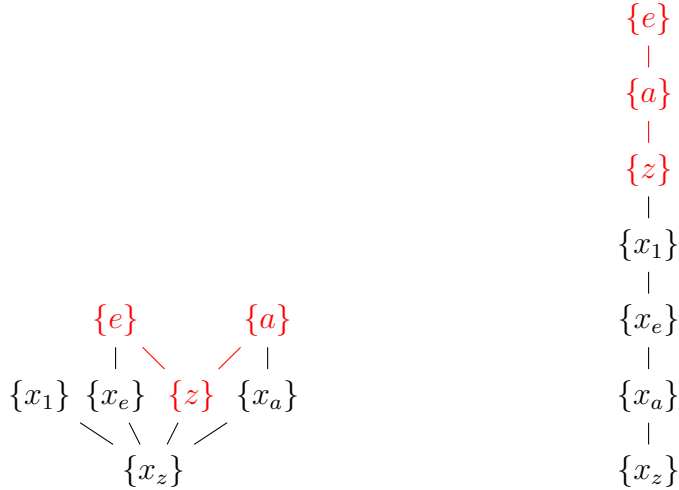
\begin{figure}[!h]
\begin{subfigure}[b]{0.4\textwidth} 
\centering 
{\begin{tikzpicture}[scale=.5]
\node (e) at (-1,4) {$\textcolor{red}{\{e\}}$};
\node (a) at (3,4) {$\textcolor{red}{\{a\}}$};
\node (1) at (-3,2) {$\{x_1\}$};
\node (x) at (-1,2) {$\{x_e\}$};
\node (z) at (1,2) {$\textcolor{red}{\{z\}}$};
\node (y) at (3,2) {$\{x_a\}$};
\node (0) at (0,0) {$\{x_z\}$};
\draw[red] (e) -- (z) -- (a);
\draw (1) -- (0) -- (x) -- (e);
\draw (z) -- (0) -- (y) -- (a);
\end{tikzpicture}}
\end{subfigure}
\begin{subfigure}[b]{0.4\textwidth}
\centering
{\begin{tikzpicture}[scale=.5] 
\node (e) at (8,12) {$\textcolor{red}{\{e\}}$};
\node (a) at (8,10) {$\textcolor{red}{\{a\}}$};
\node (z) at (8,8) {$\textcolor{red}{\{z\}}$};
\node (1) at (8,6) {$\{x_1\}$};
\node (x) at (8,4) {$\{x_e\}$};
\node (y) at (8,2) {$\{x_a\}$};
\node (0) at (8,0) {$\{x_z\}$};
\draw[red] (e) -- (a) -- (z);
\draw (z) -- (1) -- (x) -- (y) -- (0);
\end{tikzpicture}}
\end{subfigure} 
\vspace{-0.5em}
\caption{Let $S=\{e,a,z\}$ be the semigroup with multiplication given by $e^2=e,$ $ea=a$ and $ae=sz=zs=z$ ($s\in S$).  (Then $S\cong\U(\{e\}).$)  The poset of $\l$-classes of $\U(S)$ is displayed on the left. The poset of $\r$-classes of $\U(S)$ coincides with the poset of $\j$-classes, and is displayed on the right.  These contain, respectively, the posets of $\l$-classes of $S$ and of $\r$($=\!\j$)-classes of $S,$ which are displayed in red.}
\label{figure:U}
\end{figure}


\section{General Bounds on $\k$-heights}\label{sec:main}

This section contains the main results of the article. 
The first main result establishes upper and lower bounds on the $\r$-height of a stable semigroup with finite $\l$-height.

\begin{thm}\label{thm:l,r}
Let $S$ be a semigroup.  If $\HL(S)=n<\infty$, then $S$ is stable if and only if $\HR(S)<\infty,$ in which case 
$$\lceil\log_2(n+1)\rceil\leq\HR(S)\leq2^n-1.$$
\end{thm}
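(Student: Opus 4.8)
The plan is to split the statement into three pieces. One direction of the equivalence is immediate: if $\HR(S)<\infty$ then, since also $\HL(S)=n<\infty$, Lemma~\ref{lem:stable} gives that $S$ is stable. The substance of the theorem is the upper bound $\HR(S)\le 2^n-1$ for stable $S$; this simultaneously supplies the converse implication (that $S$ stable forces $\HR(S)<\infty$), and the lower bound will then follow from it by left--right duality. So I would concentrate on proving: \emph{if $S$ is stable and $\HL(S)=n<\infty$, then $\HR(S)\le 2^n-1$.}

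I would prove this by induction on $n$. The base case $n=1$ is Proposition~\ref{prop:HK=1}: a stable semigroup of $\l$-height $1$ is completely simple, hence has $\r$-height $1=2^1-1$. For the inductive step, suppose $n\ge 2$. By Lemma~\ref{lem:stable,minideal}, $S$ has a completely simple minimal ideal $J$, and $J\neq S$ because $\HL(S)\ge 2$. Put $T=S/J$. By Proposition~\ref{prop:RQ}(3) we have $\HL(T)=\HL(S)=n$ and $\HR(S)=\HR(T)$, so it suffices to bound $\HR(T)$. Now $T$ is a semigroup with zero; let $I$ be its left socle, so that $\HL(T/I)=\HL(T)-1=n-1$ by Theorem~\ref{thm:leftsocle}(1).

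To close the induction I need to control stability along this chain of quotients. First, $T=S/J$ is \emph{right} stable: this is an instance of the general fact that a Rees quotient of a right stable semigroup is right stable, which is a short direct verification (for elements other than the zero, the preorders $\leqr$ and $\leqj$ in a Rees quotient agree with those in the original semigroup, so $\leqr\cap\,\j=\r$ is inherited). Applying the same fact to $T$ and $I$ shows $T/I$ is right stable, and $T/I$ is left stable by Lemma~\ref{lem:stable} since $\HL(T/I)=n-1<\infty$; hence $T/I$ is stable with $\HL(T/I)=n-1$. The induction hypothesis then gives $\HR(T/I)\le 2^{n-1}-1$. Since $T$ is right stable and $\HR(T/I)<\infty$, Theorem~\ref{thm:leftsocle}(2) yields
$$\HR(T)\le 2\HR(T/I)+1\le 2(2^{n-1}-1)+1=2^n-1,$$
so $\HR(S)=\HR(T)\le 2^n-1$, as required. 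In particular $\HR(S)<\infty$, which completes the equivalence. For the lower bound, apply the left--right dual of the bound just proved: a stable semigroup with $\HR(S)=m<\infty$ satisfies $\HL(S)\le 2^m-1$. In the stated situation $S$ is stable and $m:=\HR(S)<\infty$, so $n=\HL(S)\le 2^m-1$, i.e.\ $2^m\ge n+1$; as $m\in\N$ this forces $m\ge\lceil\log_2(n+1)\rceil$. The main obstacle I anticipate is the bookkeeping in the inductive step: one must pass from $S$ to $S/J$ to $(S/J)/I$ and verify at each stage that right stability (and, via finiteness of the $\l$-height, full stability) is preserved, so that both the induction hypothesis and the two parts of Theorem~\ref{thm:leftsocle} are applicable.
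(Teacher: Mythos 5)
Your proof is correct and follows essentially the same route as the paper: reduce to the upper bound for stable semigroups, induct on $n$ via the completely simple minimal ideal (Proposition~\ref{prop:RQ}(3)) and the left socle (Theorem~\ref{thm:leftsocle}), and obtain the lower bound by left--right duality. The only difference is that you explicitly verify that right stability passes to the Rees quotients $S/J$ and $(S/J)/I$ so that the inductive hypothesis and Theorem~\ref{thm:leftsocle}(2) apply -- a point the paper leaves implicit -- and your verification of it is sound.
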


\begin{proof}
That finite $\HR(S)$ (together with finite $\HL(S)$) implies stability is precisely the statement of Lemma \ref{lem:stable}. Therefore it suffices to prove that for a stable semigroup the bounds stated for $\HR(S)$ hold. We prove that $\HR(S)\leq2^n-1$ by induction.  A dual argument then proves that $n=\HL(S)\leq2^m-1,$ where $m=\HR(S),$ which yields the lower bound in the statement.

Suppose then that $S$ is stable. If $n=1$, then $\HR(S)=1\,(=2^1-1)$ by Proposition \ref{prop:HK=1}. Now take $n>1.$  By Lemma \ref{lem:stable,minideal}, $S$ has a completely simple minimal ideal, say $J.$  By Proposition \ref{prop:RQ}(3), we may assume that $S=S/J$; that is, $S$ has a zero.  Let $I$ denote the left socle of $S.$  Then $\HL(S/I)=n-1$ by Theorem \ref{thm:leftsocle}(1).  By the inductive hypothesis we have $\HR(S/I)\leq 2^{n-1}-1.$  Then, using Theorem \ref{thm:leftsocle}(2), we have 
$$\HR(S)\leq 2\HR(S/I)+1\leq 2(2^{n-1}-1)+1=2^n-1,$$
completing the proof of the inductive step and hence of the theorem.
\end{proof}

Recall that it is possible for a semigroup to have finite $\l$-height but infinite $\j$-height (e.g.\ the dual of Example~\ref{ex:R3 Jinf}).  However, a stable semigroup with finite $\l$-height {\em does} have finite $\j$-height.  In fact, in this case the $\j$-height has the same upper bound as that for the $\r$-height (given in Theorem~\ref{thm:l,r}).

\begin{thm}\label{thm:l,j}
Let $S$ be a stable semigroup.  Then $\HL(S)$ is finite if only if $\HJ(S)$ is finite.  Moreover, if $\HL(S)=n<\infty$ then $$n\leq\HJ(S)\leq2^n-1.$$
\end{thm}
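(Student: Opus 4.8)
The plan is to mirror the structure of the proof of Theorem~\ref{thm:l,r}, using induction on $n = \HL(S)$ together with the socle machinery of Theorem~\ref{thm:leftsocle}. First, note that the lower bound $n \le \HJ(S)$ is immediate from Proposition~\ref{prop:stable}(3) (since a stable semigroup is in particular left stable, every $\l$-chain is a $\j$-chain). Also, if $\HJ(S)$ is finite then $S$ has a minimal ideal by Lemma~\ref{lem:minideal}, and finiteness of $\HL(S)$ would follow provided we can bound $\l$-chains; but actually the cleaner direction to record is that finite $\HL(S)$ forces finite $\HJ(S)$ via the upper bound, and conversely finite $\HJ(S)$ forces finite $\HL(S)$ since every $\l$-chain is a $\j$-chain in a (left) stable semigroup, so $\HL(S) \le \HJ(S) < \infty$. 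So the substance is the upper bound $\HJ(S) \le 2^n - 1$ for stable $S$ with $\HL(S) = n$.

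For the induction, the base case $n = 1$ gives $\HJ(S) = 1 = 2^1 - 1$ by Proposition~\ref{prop:HK=1}. For the inductive step with $n > 1$: by Lemma~\ref{lem:stable,minideal}, $S$ has a completely simple minimal ideal $J$, and by Proposition~\ref{prop:RQ}(3) we may replace $S$ by $S/J$, so assume $S$ has a zero. Let $I$ be the left socle of $S$. By Theorem~\ref{thm:leftsocle}(1), $\HL(S/I) = n - 1$, so $S/I$ is stable (Lemma~\ref{lem:stable}), and by the inductive hypothesis $\HJ(S/I) \le 2^{n-1} - 1$. Now invoke Theorem~\ref{thm:leftsocle}(3), which applies since $S$ is right stable: it gives $\HJ(S) \le \HR(S/I) + \HJ(S/I) + 1$. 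By Theorem~\ref{thm:l,r} applied to $S/I$ (which is stable with $\HL(S/I) = n - 1$), we have $\HR(S/I) \le 2^{n-1} - 1$. Therefore
$$\HJ(S) \le (2^{n-1} - 1) + (2^{n-1} - 1) + 1 = 2^n - 1,$$
completing the inductive step.

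The main point to be careful about is that Theorem~\ref{thm:leftsocle}(3) is stated with the hypothesis of right stability, and one must confirm that the quantity being bounded there is the right one and that the reduction to $S/J$ (making $S$ have a zero) does not disturb stability — but Proposition~\ref{prop:RQ}(3) equates the $\l$-, $\r$- and $\j$-heights of $S$ and $S/J$, and a Rees quotient of a stable semigroup by (the congruence of) a minimal ideal inherits stability, so this is routine. I do not expect a genuine obstacle here: the inequality $2^{n-1} + 2^{n-1} - 1 = 2^n - 1$ is exactly what makes the bound close up, just as in Theorem~\ref{thm:l,r}, and the only real input beyond the inductive hypothesis is the already-established bound $\HR(S/I) \le 2^{n-1}-1$ from Theorem~\ref{thm:l,r}.
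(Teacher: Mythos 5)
Your proposal is correct and takes essentially the same route as the paper, whose proof of this theorem is literally ``repeat the proof of Theorem~\ref{thm:l,r} with $\r$ replaced by $\j$, invoking Theorem~\ref{thm:leftsocle}(3) in place of (2)'': same induction on $n$, same reduction via Proposition~\ref{prop:RQ}(3) and the left socle. The only cosmetic difference is that you bound $\HR(S/I)\leq 2^{n-1}-1$ by citing Theorem~\ref{thm:l,r} rather than by the chain $\HR(S/I)\leq\HJ(S/I)\leq 2^{n-1}-1$ already supplied by Theorem~\ref{thm:leftsocle}(3); do note that stability of $S/I$ comes from its being a Rees quotient of a stable semigroup (as you observe at the end), not from Lemma~\ref{lem:stable}, which from finite $\HL(S/I)$ alone yields only left stability.
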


\begin{proof}
By Proposition \ref{prop:stable}, we have $\HL(S)\leq\HJ(S).$  So, it suffices to prove the upper bound in the statement.  The proof of this is essentially the same as that of Theorem \ref{thm:l,r}: simply replace $\r$ with $\j,$ and invoke Theorem \ref{thm:leftsocle}(3) rather than Theorem \ref{thm:leftsocle}(2).
\end{proof}

Using the construction from Section \ref{sec:quotients}, we now show that all the possible $\r$- and $\j$-heights according to Theorems \ref{thm:l,r} and \ref{thm:l,j} are in fact attainable.

\begin{thm}\label{thm:n,m}
Let $n\in\N.$  For every $m\in\{n,\dots,2^n-1\},$ there exists a (necessarily $\j$-trivial) semigroup $S$ such that $\HL(S)=n$ and $\HR(S)=\HJ(S)=|S|=m.$
\end{thm}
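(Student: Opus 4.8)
The plan is to construct, for each $n\in\N$ and each $m\in\{n,\dots,2^n-1\}$, an explicit $\j$-trivial semigroup $S$ of size $m$ with $\HL(S)=n$ and $\HR(S)=\HJ(S)=m$. Since $S$ is to be $\j$-trivial, the poset of $\j$-classes is just the partially ordered set $(S,\leqj)$, and $\HJ(S)$ is the height of this poset; $\r$-triviality will force $\HR(S)=m$ as well. So the real task is to build a $\j$-trivial semigroup whose divisibility poset $(S,\leqj)$ has height $m$ but whose poset of $\l$-classes (equivalently, since everything is trivial, whose $\leql$-poset) has height only $n$. The slack between $n$ and $m$ is exactly the room given by Theorem~\ref{thm:l,r}, so one expects the extremal examples to be built by iterating the socle/extension mechanism of Section~\ref{sec:quotients}.

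The natural approach is induction on $n$, mirroring the proof of Theorem~\ref{thm:l,r} but now building examples rather than bounding. For $n=1$ the only value is $m=1$, and any trivial group (i.e.\ the one-element semigroup) works. For the inductive step, suppose we have for $n-1$ a family of $\j$-trivial semigroups realising every pair $(n-1,m')$ with $m'\in\{n-1,\dots,2^{n-1}-1\}$. Given a target $m\in\{n,\dots,2^n-1\}$, we want to produce an example with $\l$-height $n$ and size $m$. Writing $m$ in a form adapted to Construction~\ref{con:U}: if $m$ is odd, say $m=2m'+1$, then Proposition~\ref{prop:U} gives $\HL(\U(S))=\HL(S)+1$ and $\HR(\U(S))=2\HR(S)+1$, and $|\U(S)|=2|S^1|+\dots$ needs checking — in fact $|\U(S)|=|S|+|S^1|=2|S|+1$ when $S$ has no identity, matching $m=2m'+1$ with $|S|=m'$. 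So applying $\U$ to an $(n-1,m')$-example gives an $(n,2m'+1)$-example of size $2m'+1$; this covers the odd values of $m$ in the allowed range. For the remaining (even) values of $m$, the idea is to take one of these $\U$-based examples of the next odd size up and then \emph{delete} a single suitably chosen element (one lying in a $\j$-class that can be removed while leaving an ideal complement, e.g.\ a maximal $x_s$), or alternatively to enlarge a smaller example by adjoining one element on top; either move changes $|S|$ by $1$ and, if the element is chosen in the socle/top appropriately, leaves $\HL$ unchanged while changing $\HR=\HJ$ by $1$. One must check that after such a modification the semigroup is still $\j$-trivial, so that indeed $\HR=\HJ=|S|$.

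Concretely I would probably avoid the delete-an-element bookkeeping and instead give a single uniform construction: fix a chain $z=a_0 <_{\mathcal J} a_1 <_{\mathcal J}\cdots$ together with null-semigroup "padding" elements $x_s$, choosing exactly how many $x_s$'s to include so that the total count is $m$ while the $\leql$-poset stays of height $n$ (the $x_s$'s are all $\l$-related to fixed anchors, so adding more of them inflates $\HR$ and $\HJ$ without inflating $\HL$). The key identities $ax_s=x_s$, $x_sa=x_{sa}$, $x_sx_t=x_z$ from Construction~\ref{con:U} are exactly what make a whole antichain of $x_s$'s in the $\l$-poset while they form a chain in the $\r$-poset, which is the phenomenon driving the $2^n-1$ bound. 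I would set up the parametrised version of $\U$ (iterated an appropriate number of times, with the number of surviving padding elements at each level controlled) and verify: (i) $S$ is a semigroup (routine associativity check), (ii) $S$ is $\j$-trivial, (iii) $\HL(S)=n$, (iv) the $\leqj=\leqr$-poset has height exactly $m$.

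The main obstacle I anticipate is the combinatorial fitting problem: showing that \emph{every} integer $m$ in the interval $\{n,\dots,2^n-1\}$ — not just the "round" values $2^n-1$, $2^{n-1}-1$, etc.\ — is hit, with the size of $S$ being exactly $m$ rather than merely $\leq m$ or $\geq m$. The $\U$-construction doubles (plus one), so unmodified it only reaches a sparse set of sizes; closing all the gaps requires a careful induction where at each level one has freedom to include anywhere from the minimum up to the maximum allowed number of padding elements, and one must check that the resulting ranges of achievable $m$ at level $n$ tile the whole interval $\{n,\dots,2^n-1\}$ with no gaps. Getting the induction hypothesis stated strongly enough (realising \emph{all} intermediate sizes at level $n-1$, with the $\l$-height pinned at $n-1$ and with a distinguished maximal element available to serve as the "anchor" $a_{n-1}$ for the next level) is the delicate part; once that is set up correctly, verifying $\j$-triviality and computing the two heights is straightforward from Lemma~\ref{lem:U} and Proposition~\ref{prop:U} together with Theorem~\ref{thm:leftsocle}.
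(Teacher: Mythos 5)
Your proposal has the right engine --- induction on $n$ with Construction~\ref{con:U} supplying the padding elements $x_s$ that form an $\l$-antichain but an $\r$-chain --- and you correctly identify the crux: showing that \emph{every} $m\in\{n,\dots,2^n-1\}$ is realised exactly. But that crux is precisely what the proposal does not resolve, and the specific moves you float do not suffice as stated. Applying $\U$ to the level-$(n-1)$ examples of sizes $m'\in\{n-1,\dots,2^{n-1}-1\}$ only reaches the odd sizes $2m'+1\in\{2n-1,\dots,2^n-1\}$; so the uncovered set is not just "the even values" but all of $\{n,\dots,2n-2\}$ as well, including odd values there (e.g.\ $n=4$, $m=5$). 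The "delete a maximal $x_s$" repair is also problematic: $\U(S){\setminus}\{x_1\}$ is a subsemigroup but not an ideal (since $ax_1=x_1$ for $a\in S$), and Green's preorders need not restrict well to arbitrary subsemigroups, so the effect on $\HL$ and $\HR$ is unverified; meanwhile the "adjoin one element on top while leaving $\HL$ unchanged and incrementing $\HR$" move is asserted rather than constructed. You flag all of this as "the delicate part," but the theorem lives or dies on exactly that tiling argument, so the proof is genuinely incomplete.

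The paper closes the gap with two clean devices you did not use. For $m\in\{n,\dots,2^{n-1}\}$ it simply adjoins an identity: if $S$ is an $(n-1,m-1)$ example then $S^1$ is an $(n,m)$ example, since adjoining an identity increments $\HL$, $\HR$, $\HJ$ and $|S|$ each by exactly $1$; this disposes of the entire lower half of the interval, including the range your $\U$-images cannot reach. For $m\in\{2^{n-1}+1,\dots,2^n-1\}$ it applies $\U$ only to the \emph{maximal} level-$(n-1)$ example (size $2^{n-1}-1$), obtaining $U$ with $\HR(U)=|U|=2^n-1$; since $\HR(U)\le\HJ(U)\le|U|$ this forces $U$ to be $\j$-trivial with its elements forming a single $\j$-chain $(a_1,\dots,a_{2^n-1})$, so every down-set $I=\{a_m,\dots,a_{2^n-1}\}$ is an ideal and the Rees quotient $U/I$ has size exactly $m$. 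Because $m\ge 2^{n-1}+1$, this ideal sits inside the padding part $U{\setminus}S$, so the quotient retains the full $\l$-chain of length $n$ and the full $\r(=\j)$-chain of length $m$. The lesson is that the single-element fine-tuning you were worried about is done by Rees quotients of a chain (always legitimate, strict comparabilities preserved off $0$), not by deleting or inserting individual elements of $\U(S)$.
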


\begin{proof}
We prove the result by induction.  Note that a finite semigroup $S$ is $\j$-trivial if and only if $\HJ(S)=|S|.$  For $n=1$ (in which case $n=2^n-1$), we take $S$ to be the trivial semigroup, which clearly has the desired properties.  

Now let $n\geq2.$  First consider $m\in\{n,\dots,2^{n-1}\}.$  By the inductive hypothesis, there exists a semigroup $S$ such that $\HL(S)=n-1$ and $\HR(S)=\HJ(S)=|S|=m-1.$  Then $\HL(S^1)=n$ and $\HR(S^1)=\HJ(S^1)=|S^1|=m,$ as required. 

Now consider $m\in\{2^{n-1}+1,\dots,2^n-1\}.$  By the inductive hypothesis, there exists a semigroup $S$ such that $\HL(S)=n-1$ and $\HR(S)=\HJ(S)=|S|=2^{n-1}-1.$  Being finite and $\j$-trivial, the semigroup $S$ has a zero, say $z.$  Let $U=\U(S).$  Clearly $|U|=2|S|+1=2^n-1.$  By Proposition \ref{prop:U}, we have $\HL(U)=n$ and $\HR(U)=\HJ(U)=2^n-1.$  In fact, following the proof of Proposition \ref{prop:U}, and using the fact that $\HR(U)=|U|,$ we may write $U=\{a_1,\dots,a_{2^n-1}\},$
where $S=\{a_1,\dots,a_{2^{n-1}-1}(=z)\},$ such that $(a_1,\dots,a_{2^n-1})$
is an $\r(=\j)$-chain of $U.$  By Lemma \ref{lem:U}, the left socle of $U$ is 
$$(U{\setminus}S)\cup\{z\}=\{a_{2^{n-1}-1},\dots,a_{2^n-1}\}.$$
Let $I=\{a_m,\dots,a_{2^n-1}\}.$  Then $I$ is an ideal of $U$ (since it consists of all elements less than or equal to $a_m$ under the $\j$-order).  Let $T$ denote the Rees quotient 
$$U/I=\{a_1,\dots,a_{m-1},0\},$$
and observe that $S\subseteq T.$  
It is immediate that for $\k\in\{\l,\r,\j\}$ and $x,y\in T{\setminus}\{0\},$ we have $x\gk y$ in $T$ if and only if $x\gk y$ in $U.$  It follows that $\HL(T)=n$ and $\HR(T)=\HJ(T)=|T|=m.$  This completes the proof.
\end{proof}

\begin{cor}
Let $n\in\N.$  For every $m\in\{\lceil\log_2(n+1)\rceil,\dots,2^n-1\},$ there exists a semigroup $S$ such that $\HL(S)=n,$ $\HR(S)=m$ and $\HJ(S)=|S|=\max(m,n).$ 
\end{cor}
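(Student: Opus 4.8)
The goal is to realise, for each $n$ and each $m$ in the stated range, a semigroup $S$ with $\HL(S)=n$, $\HR(S)=m$ and $\HJ(S)=|S|=\max(m,n)$. The key observation is that Theorem~\ref{thm:n,m} already handles the upper part of the range, namely $m\in\{n,\dots,2^n-1\}$: in that case $\max(m,n)=m$, and the semigroup $S$ produced there satisfies $\HL(S)=n$ and $\HR(S)=\HJ(S)=|S|=m$, exactly as required. So the only remaining work is the lower part of the range, $m\in\{\lceil\log_2(n+1)\rceil,\dots,n-1\}$, where $\max(m,n)=n$; here we must build a semigroup with $\HL(S)=n$ but small $\r$-height $\HR(S)=m$, and with $\HJ(S)=|S|=n$ (so again a $\j$-trivial semigroup of size $n$, this time with its $n$ elements forming a single $\j$-chain).

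For the lower part, I would invoke Theorem~\ref{thm:l,r} in its left--right dual form. Fix $m$ with $\lceil\log_2(n+1)\rceil\le m\le n-1$. Since $m\ge\lceil\log_2(n+1)\rceil$ we have $n\le 2^m-1$, so by the dual of Theorem~\ref{thm:n,m} (interchanging the roles of $\l$ and $\r$) there exists a $\j$-trivial semigroup $S$ with $\HR(S)=m$ and $\HL(S)=\HJ(S)=|S|=n$. Then $\max(m,n)=n$, and indeed $\HL(S)=n$, $\HR(S)=m$, $\HJ(S)=|S|=n$, which is precisely the assertion. Thus the two ranges $\{n,\dots,2^n-1\}$ (using Theorem~\ref{thm:n,m} directly) and $\{\lceil\log_2(n+1)\rceil,\dots,n-1\}$ (using its dual) together cover all of $\{\lceil\log_2(n+1)\rceil,\dots,2^n-1\}$, and the corollary follows.

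The only subtlety worth checking — and the one place where I expect to have to be slightly careful — is the boundary case $m=n$ and the consistency of the constant $\max(m,n)$ across the two constructions: when $m=n$ both Theorem~\ref{thm:n,m} and its dual apply, and in either case one gets $\HL(S)=\HR(S)=\HJ(S)=|S|=n$, so there is no conflict. A second point to verify is that the lower bound $\lceil\log_2(n+1)\rceil$ on the range for $m$ is exactly the threshold that makes $n\le 2^m-1$ hold, so that the dual of Theorem~\ref{thm:n,m} is applicable for every permissible $m$; this is immediate from the definition of the ceiling. With these checks in place, the proof is a short bookkeeping argument assembling the two already-established theorems, and there is no genuine additional obstacle.
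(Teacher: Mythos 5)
Your proposal is correct and follows exactly the paper's own argument: apply Theorem~\ref{thm:n,m} directly when $m\geq n$, and its left--right dual (noting that $m\geq\lceil\log_2(n+1)\rceil$ is precisely the condition $n\leq 2^m-1$ needed for the dual to apply) when $m<n$. The boundary and threshold checks you flag are the right ones and both go through.
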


\begin{proof}
If $m\geq n$ then we apply Theorem \ref{thm:n,m}.  If $m<n\,(\leq2^m-1),$ then we apply the dual of Theorem \ref{thm:n,m} to obtain a semigroup $S$ such that $\HR(S)=m$ and $\HL(S)=\HJ(S)=|S|=n.$
\end{proof}



\begin{table}[h]
\centering
\begin{tabular}{|c||c||c|}
\hline
$\HL(S)$ & Possible values for $m=\HR(S)$ & Possible values for $p=\HJ(S)$\\
\hline\hline
1 & $m=1$ & $p=1$\\
2 & $m=2,3$ & $p=2,3$\\
3 & $2\leq m\leq 7$ & $3\leq p\leq 7$\\
4 & $3\leq m\leq 15$ & $4\leq p\leq 15$\\
5 & $3\leq m\leq 31$ & $5\leq p\leq 31$\\
6 & $3\leq m\leq 63$ & $6\leq p\leq 63$\\
7 & $3\leq m\leq 127$ & $7\leq p\leq 127$\\
8 & $4\leq m\leq 255$ & $8\leq p\leq 255$\\
\hline
\end{tabular}
\vspace{-0.5em}
\caption{For some small natural numbers $n,$ the range of possible values of $\HR(S)$ and of $\HJ(S)$ for a stable semigroup $S$ with $\HL(S)=n$.}
\label{table:stable}
\end{table}

Recall Proposition \ref{prop:HK=1}, which provides several equivalent conditions for a semigroup $S$ to have $\HL(S)=\HR(S)=1,$ and Proposition \ref{prop:HK=2}, which gives equivalent conditions for $S$ to have $\HL(S)=\HR(S)=2$.  Our next result is an analogue, describing when the $\l$- and $\r$-heights of a semigroup are both finite.

\begin{thm}\label{thm:l,r,h,j}
For a semigroup $S,$ the following are equivalent:
\begin{enumerate}
\item $\HL(S)$ and $\HR(S)$ are finite;
\item $\HL(S)$ and $\HH(S)$ are finite;
\item $\HL(S)$ is finite and $S$ is stable;
\item $\HJ(S)$ and $\HH(S)$ are finite; 
\item $\HJ(S)$ is finite and $S$ is stable;
\item $\HR(S)$ and $\HH(S)$ are finite;
\item $\HR(S)$ is finite and $S$ is stable.
\end{enumerate}
\end{thm}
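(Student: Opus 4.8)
The plan is to deduce everything from results already established, organising the seven conditions around condition (3) (``$\HL(S)$ finite and $S$ stable''), which I will show is equivalent to each of the others.

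First I would dispose of the conditions not involving $\HH$, namely (1), (5) and (7). The implication (1) $\Rightarrow$ (3) is immediate from Lemma~\ref{lem:stable} (finiteness of both $\HL$ and $\HR$ forces stability), and (3) $\Rightarrow$ (1) is exactly Theorem~\ref{thm:l,r}. The equivalence (3) $\Leftrightarrow$ (5) is Theorem~\ref{thm:l,j}. For (7) I would apply the left--right dual of Theorem~\ref{thm:l,r}: if $\HR(S)$ is finite and $S$ is stable then $\HL(S)$ is finite, so (7) $\Rightarrow$ (3); conversely (3) $\Rightarrow$ (1) $\Rightarrow$ (7), using Lemma~\ref{lem:stable} for the last step. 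Hence (1), (3), (5) and (7) are all equivalent, and in particular each of them implies that $S$ is stable and that $\HL(S)$, $\HR(S)$ and $\HJ(S)$ are all finite.

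Next I would bring in the three conditions involving $\HH$, each of which has the shape ``$X$ finite and $\HH(S)$ finite'' with $X\in\{\HL,\HJ,\HR\}$. For the forward direction, $\HH(S)$ finite forces $S$ to be stable by Lemma~\ref{lem:gb}; hence (2) $\Rightarrow$ (3), (4) $\Rightarrow$ (5) and (6) $\Rightarrow$ (7). For the converse, suppose any one of (1)--(7) holds; by the previous paragraph $S$ is then stable, in particular right stable, so Proposition~\ref{prop:stable}(2) gives $\HH(S)\le\HL(S)<\infty$. Combining this with the finiteness of $\HL(S)$, of $\HJ(S)$, and of $\HR(S)$ respectively yields (2), (4) and (6). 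This closes the web of equivalences.

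Finally, a remark on difficulty: there is essentially no new mathematical content here, since the statement is a repackaging of Theorems~\ref{thm:l,r} and~\ref{thm:l,j}, Lemmas~\ref{lem:stable} and~\ref{lem:gb}, and Proposition~\ref{prop:stable}. The only point requiring care is the bookkeeping: each implication must be routed through the observation that the ``$\HH(S)$ finite'' hypothesis and the ``$\HL(S)$ (or $\HR(S)$) finite'' hypothesis each \emph{independently} force stability (via Lemma~\ref{lem:gb} and Lemma~\ref{lem:stable} respectively), and one must invoke the correct left--right dual of Theorem~\ref{thm:l,r} when passing between the $\l$- and $\r$-sides.
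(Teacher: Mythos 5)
Your proof is correct and uses exactly the same ingredients as the paper's (Lemmas~\ref{lem:stable} and~\ref{lem:gb}, Proposition~\ref{prop:stable}, and Theorems~\ref{thm:l,r} and~\ref{thm:l,j}); the only difference is organisational, as the paper verifies the cycle $(1)\Rightarrow(2)\Rightarrow\cdots\Rightarrow(7)\Rightarrow(1)$ whereas you route everything through condition (3) as a hub. Both arrangements are valid and of equal length, so this is essentially the paper's argument.
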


\begin{proof}
If (1) holds, then $S$ is stable by Lemma \ref{lem:stable}, and hence $\HH(S)$ is finite by Proposition \ref{prop:stable}(3), so (2) holds.  Each of (2)$\Rightarrow$(3), (4)$\Rightarrow$(5) and (6)$\Rightarrow$(7) follows from Lemma \ref{lem:gb}.  That (3)$\Rightarrow$(4) follows from Theorem \ref{thm:l,j} and Proposition \ref{prop:stable}(3), and that (5)$\Rightarrow$(6) follows from Proposition \ref{prop:stable}(3).  Finally, (7)$\Rightarrow$(1) follows from Proposition \ref{prop:stable}(3) and the dual of Theorem \ref{thm:l,r}.
\end{proof}

An important aspect of Theorem~\ref{thm:l,r,h,j} is:

\begin{cor}
If $S$ is a stable semigroup, then 
$$\HL(S)\text{ is finite}\;\Leftrightarrow\;\HR(S)\text{ is finite}\;\Leftrightarrow\;\HJ(S)\text{ is finite},$$
in which case $\HH(S)$ is finite.
\end{cor}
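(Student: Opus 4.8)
The plan is simply to specialise Theorem~\ref{thm:l,r,h,j} to the situation in which $S$ is already known to be stable. Under the standing hypothesis that $S$ is stable, condition~(3) of that theorem reads ``$\HL(S)$ is finite'', condition~(7) reads ``$\HR(S)$ is finite'', and condition~(5) reads ``$\HJ(S)$ is finite''. Since Theorem~\ref{thm:l,r,h,j} asserts that conditions~(1)--(7) are all mutually equivalent, in particular (3), (5) and (7) are equivalent to one another, which is precisely the claimed chain of biconditionals $\HL(S)\text{ finite}\Leftrightarrow\HR(S)\text{ finite}\Leftrightarrow\HJ(S)\text{ finite}$.

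For the final clause, I would observe that each of the equivalent conditions (3), (5), (7) implies condition~(2) (equivalently (4) or (6)), and each of these includes the assertion that $\HH(S)$ is finite; hence, whenever $\HL(S)$, $\HR(S)$ and $\HJ(S)$ are finite, so is $\HH(S)$.

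There is no genuine obstacle here, as the content is entirely packaged in Theorem~\ref{thm:l,r,h,j}; the corollary merely records what that theorem says once the extra assumption of stability is imposed. (Should a self-contained argument be preferred, one can instead close the cycle directly: if $S$ is stable and $\HL(S)$ is finite, then $\HJ(S)$ is finite by Theorem~\ref{thm:l,j}, whence $\HR(S)$ is finite since $\HR(S)\leq\HJ(S)$ by Proposition~\ref{prop:stable}(3); the dual of Theorem~\ref{thm:l,r} gives the remaining implication, and $\HH(S)\leq\min\big(\HL(S),\HR(S)\big)$ by Proposition~\ref{prop:stable}(3) handles the $\h$-height.)
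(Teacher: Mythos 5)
Your proposal is correct and is exactly how the paper obtains this corollary: it is presented as an immediate specialisation of Theorem~\ref{thm:l,r,h,j}, reading off the equivalence of conditions (3), (5) and (7) under the stability hypothesis, with the finiteness of $\HH(S)$ coming from condition (2). No issues.
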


Recall that for a stable semigroup $S$ we have $\HL(S)=1$ if and only if $\HR(S)=1$ if and only if $\HJ(S)=1.$  The next result establishes lower and upper bounds on the $\j$-height in terms of finite (but greater than 1) $\l$- and $\r$-heights.

\begin{thm}\label{thm:l,r,j}
Let $S$ be a semigroup such that $2\leq\HL(S)<\infty$ and $2\leq\HR(S)<\infty.$  Then, letting $\min\big(\HL(S),\HR(S)\big)=n,$ we have
$$\max\big(\HL(S),\HR(S)\big)\leq\HJ(S)\leq\min(2^n-1,\HL(S)+\HR(S)-2).$$
\end{thm}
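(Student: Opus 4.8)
The plan is as follows. Since $\HL(S)$ and $\HR(S)$ are finite, $S$ is stable by Lemma~\ref{lem:stable}, and hence by Lemma~\ref{lem:stable,minideal} it has a completely simple minimal ideal $J$, which is at once the union of all minimal $\l$-classes, the union of all minimal $\r$-classes, and the unique minimal $\j$-class of $S$. Write $\ell=\HL(S)$ and $r=\HR(S)$. The lower bound $\max(\ell,r)\le\HJ(S)$ is immediate from Proposition~\ref{prop:stable}(3). For the inequality $\HJ(S)\le 2^n-1$, Theorem~\ref{thm:l,j} gives $\HJ(S)\le 2^{\ell}-1$, while its left--right dual gives $\HJ(S)\le 2^{r}-1$, and hence $\HJ(S)\le 2^{\min(\ell,r)}-1=2^n-1$. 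It therefore remains to prove $\HJ(S)\le\ell+r-2$, which is the substantive part of the theorem.

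To this end I would take a longest $\j$-chain $a_1\gj a_2\gj\dots\gj a_k$ in $S$ and show $k\le\ell+r-2$. For each $i$ fix $s_i,t_i\in S^1$ with $a_{i+1}=s_ia_it_i$, so that $a_{i+1}=s_i\cdots s_1a_1t_1\cdots t_i$, and set $b_i=s_{i-1}\cdots s_1a_1$ and $c_i=a_1t_1\cdots t_{i-1}$ for $1\le i\le k$, interpreting $b_1=c_1=a_1$. Then $b_1\geql\cdots\geql b_k$ and $c_1\geqr\cdots\geqr c_k$, and $a_i=b_it_1\cdots t_{i-1}=s_{i-1}\cdots s_1c_i$. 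The computation in the proof of Theorem~\ref{thm:leftsocle}(3) shows that if $b_i\,\l\,b_{i+1}$ and $c_i\,\r\,c_{i+1}$, then, writing $b_i=xb_{i+1}$ and $c_i=c_{i+1}y$, one gets $a_i=xa_{i+1}y$, contradicting $a_i\gj a_{i+1}$. Hence, with $L=\{i:b_i\gl b_{i+1}\}$ and $R=\{i:c_i\gr c_{i+1}\}$, we have $L\cup R=\{1,\dots,k-1\}$. As the $\l$-classes of $b_1,\dots,b_k$ form a chain of size $|L|+1$ in the poset of $\l$-classes, we get $|L|\le\ell-1$, and likewise $|R|\le r-1$; thus $k-1=|L\cup R|\le(\ell-1)+(r-1)$, that is, $k\le\ell+r-1$.

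The crux --- and what I expect to be the main obstacle --- is to improve this estimate by one. If $L\cap R\ne\emptyset$ then $k-1=|L|+|R|-|L\cap R|\le\ell+r-3$ and we are done; likewise if $|L|\le\ell-2$ or $|R|\le r-2$. So it suffices to rule out the case $L\cap R=\emptyset$, $|L|=\ell-1$ and $|R|=r-1$ (so that $k=\ell+r-1$). In this case the chain of $\l$-classes of $b_1,\dots,b_k$ has the maximal possible length $\ell$, so the $\l$-class of $b_k$ is a minimal $\l$-class, whence $b_k\in J$; similarly $c_k\in J$. Now $k-1$ lies in exactly one of $L$, $R$. If $k-1\in L$, then $k-1\notin R$, so $c_{k-1}\,\r\,c_k$, hence $c_{k-1}$ lies in the same minimal $\r$-class as $c_k$ and so $c_{k-1}\in J$; then $a_{k-1}=s_{k-2}\cdots s_1c_{k-1}\in S^1c_{k-1}\subseteq J$. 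If instead $k-1\in R$, then symmetrically $b_{k-1}\,\l\,b_k$, so $b_{k-1}\in J$ and $a_{k-1}=b_{k-1}t_1\cdots t_{k-2}\in b_{k-1}S^1\subseteq J$. Either way $a_{k-1}\in J$; but $a_{k-1}\gj a_k$ with $a_k\in J$ forces $a_{k-1}$ to lie strictly above the minimal $\j$-class, so $a_{k-1}\notin J$, a contradiction. This rules out the remaining case, so $k\le\ell+r-2$, as required.

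To summarise, the delicate point is precisely this last refinement: the straightforward two-chain bookkeeping yields only $k\le\ell+r-1$, and extracting the final unit relies on the fact that when both auxiliary chains $(b_i)$ and $(c_i)$ attain maximal length their bottom elements are forced into the minimal ideal $J$, which is incompatible with the strict $\j$-descent at the last step of the chain. Everything else in the argument is routine bookkeeping or a direct appeal to Proposition~\ref{prop:stable}, Theorem~\ref{thm:l,j}, and Theorem~\ref{thm:leftsocle}.
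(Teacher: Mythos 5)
Your proof is correct, and its core is the same as the paper's: both arguments build the auxiliary sequences $b_i=s_{i-1}\cdots s_1a_1$ and $c_i=a_1t_1\cdots t_{i-1}$ and use the computation from the proof of Theorem~\ref{thm:leftsocle}(3) to show that at each step at least one of the two auxiliary chains drops strictly. The only divergence is in how the final unit is extracted. The paper first passes to the Rees quotient by the completely simple minimal ideal (via Proposition~\ref{prop:RQ}(3)), so that the $\j$-chain ends in a zero; then both auxiliary chains can be extended by appending $0$, which yields the stronger bounds $|L|\le\HL(S)-2$ and $|R|\le\HR(S)-2$ outright and makes the count immediate. You instead stay in $S$, accept the weaker bounds $|L|\le\HL(S)-1$ and $|R|\le\HR(S)-1$, and dispose of the single extremal configuration ($L$ and $R$ disjoint and both of maximal size) by observing that maximal-length auxiliary chains force $b_k$ and $c_k$, and hence $a_{k-1}$, into the minimal ideal $J$, contradicting the strict $\j$-descent $a_{k-1}\gj a_k$. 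The two endgames are equivalent in substance --- both ultimately exploit the fact that the bottom of a maximal one-sided chain lies in the minimal ideal --- but yours trades the quotient reduction for a short case analysis; every step of that analysis checks out (in particular, $J$ is a union of minimal $\l$- and $\r$-classes by Lemma~\ref{lem:stable,minideal} and is an ideal, which is exactly what your argument needs).
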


\begin{proof}
By Lemma \ref{lem:stable}, the semigroup $S$ is stable.  Thus, by Proposition \ref{prop:stable}(3), the lower bound in the statement holds.  The inequality $\HJ(S)\leq2^n-1$ follows from Theorem \ref{thm:l,j} and its dual (which bounds the $\j$-height in terms of the $\r$-height).  It remains to show that $\HJ(S)\leq\HL(S)+\HR(S)-2.$

By Lemma \ref{lem:stable,minideal}, $S$ has a completely simple minimal ideal, which, by Proposition \ref{prop:RQ}(3), we may assume is trivial, i.e.\ $S$ has a zero 0.  
Consider a $\j$-chain $(a_1,\dots,a_n,0)$ in $S.$  We need to show that $n\leq\HL(S)+\HR(S)-3.$ For each $i\in\{1,\dots,n-1\},$ there exist $s_i,t_i\in S^1$ such that $a_{i+1}=s_ia_it_i$, so that $a_{i+1}=s_i\dots s_1a_1t_1\dots t_i$.  For $i\in\{0,\dots,n-1\},$ let $b_{i+1}=s_i\dots s_1a_1$ and $c_{i+1}=a_1t_1\dots t_i$, interpreting $b_1=c_1=a_1$.  We then have chains
$$b_1\geql\cdots\geql b_n\gl0\qquad\text{and}\qquad c_1\geqr\cdots\geqr c_n\gr0.$$
Now, for each $i\in\{1,\dots,n-1\},$ either $b_i\gl b_{i+1}$ or $c_i\gr c_{i+1}$.  Indeed, if we had $b_i\,\l\,b_{i+1}$ and $c_i\,\r\,c_{i+1}$ for some $i\in\{1,\dots,n-1\},$ then, as in the proof of Theorem \ref{thm:leftsocle}(3), we would have $a_i\,\j\,a_{i+1}$, a contradiction.  It follows that there exist $k,l\leq n$ such that $n\leq k+l-1,$ there is an $\l$-chain $(b_{i_1},\dots,b_{i_k},0)$ where $1=i_1<\dots<i_k\leq n,$ and there is an $\r$-chain $(c_{j_1},\dots,c_{j_l},0)$ where $1=j_1<\dots<j_l\leq n.$  We must then have $k\leq\HL(S)-1$ and $l\leq\HR(S)-1,$ whence $$n\leq k+l-1\leq\HL(S)+\HR(S)-3,$$ as required.
\end{proof}

It is an open question as to whether the upper bound on $\HJ(S)$ given in Theorem \ref{thm:l,r,j} can be attained for every possible combination of $\HL(S)$ and $\HR(S)$ (as determined by Theorem \ref{thm:l,r} and its dual).  However, it is asymptotically sharp:

\begin{thm}\label{thm:asym}
For each $n\in\N$ there exists a semigroup $U_n$ such that $\HL(U_n)=\HR(U_n)=2^n+n-3$ and $\HJ(U_n)=2^{n+1}-4.$  Moreover, we have
$$\lim_{n\to\infty}\frac{\HJ(U_n)}{\HL(U_n)+\HR(U_n)-2}=1.$$
\end{thm}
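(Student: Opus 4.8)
The plan is to construct the semigroups $U_n$ recursively as a tower of ideal extensions, and to pin down the three heights by an induction on $n$ that feeds on the machinery of Section~\ref{sec:quotients} — specifically the socle bounds of Theorem~\ref{thm:leftsocle} and its left--right dual, together with the ideal-extension inequality~(\ref{eq}). The numerical backbone is that, writing $h_n=2^n+n-3$ and $j_n=2^{n+1}-4$, one has $h_{n+1}-h_n=2^n+1$ and $j_{n+1}-j_n=2^{n+1}$, so each step should attach to $U_n$ an ideal that contributes $2^n+2$ to both the $\l$- and $\r$-heights and $2^{n+1}+1$ to the $\j$-height; then (\ref{eq}) and the inductive hypothesis give $\HL(U_{n+1})\le(2^n+2)+(2^n+n-3)-1=2^{n+1}+n-2=h_{n+1}$, and likewise $\HR(U_{n+1})\le h_{n+1}$ and $\HJ(U_{n+1})\le(2^{n+1}+1)+(2^{n+1}-4)-1=2^{n+2}-4=j_{n+1}$.

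For the base case $n=2$ one needs a (necessarily stable) semigroup with $\HL=\HR=3$ and $\HJ=4$; since Theorem~\ref{thm:l,r,j} gives $\HJ\le\HL+\HR-2=4$ here, this is the extremal case, and I would write down a small explicit $\j$-trivial semigroup with a zero — four elements, unique $\j$-chain $a\gj b\gj c\gj 0$, with the $\l$- and $\r$-preorders arranged so that this $\j$-chain splits as a length-$3$ $\l$-chain and a length-$3$ $\r$-chain overlapping only at $a$ — and verify its multiplication table directly. For the recursive step I would build $U_{n+1}=U_n\cup N_n$ in the spirit of Construction~\ref{con:U} and the ``product'' extension of Example~\ref{ex:R3 Jinf}: $N_n$ is a fresh set of generators, $I_n=N_n\cup\{z_n\}$ (where $z_n$ is the zero of $U_n$) is an ideal with $U_{n+1}/I_n\cong U_n$, and the left and right actions of $U_n$ on $I_n$ are chosen so that $I_n$ simultaneously plays the role of a left socle and a right socle. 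Concretely, $I_n$ should be a ``grid'': its poset of $\l$-classes inside $U_{n+1}$ a chain together with the zero, of height $2^n+2$; its poset of $\r$-classes likewise of height $2^n+2$; and its poset of $\j$-classes inside $U_{n+1}$ the product of those two chains with their bottom corner identified with the zero, hence of height $2^{n+1}+1$. The actions are arranged so that the top of this grid hangs below the minimal ideal of $U_n$, and one checks the null-type relations on $I_n$ so that no spurious $\l$-, $\r$- or $\j$-comparabilities are created.

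Granting the construction, the inductive verification runs as follows. Finiteness of $\HL(U_{n+1})$ and $\HR(U_{n+1})$ (which follows from the displayed bounds) gives stability via Lemma~\ref{lem:stable}, so the socle results apply; and $\HL(U_n)=\HR(U_n)$ is built in by the left--right symmetry of the construction. The upper bounds are exactly the three displayed applications of (\ref{eq}) (with $U_{n+1}/I_n\cong U_n$ and the inductive hypothesis), the crucial inputs being $\HL^{U_{n+1}}(I_n)=\HR^{U_{n+1}}(I_n)=2^n+2$, $\HJ^{U_{n+1}}(I_n)=2^{n+1}+1$, which come from the grid structure of $I_n$. For the lower bounds I would exhibit explicit chains: concatenate a longest $\l$-chain (resp.\ $\r$-chain) of $U_n$, lifted into $U_{n+1}$, with a longest $\l$-chain (resp.\ $\r$-chain) through the grid $I_n$; and build a ``zigzag'' $\j$-chain of length $j_{n+1}$ by stacking a longest $\j$-chain of $U_n$ on top of the longest $\j$-chain through the grid (the diagonal of the product poset). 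Theorem~\ref{thm:n,m} and Proposition~\ref{prop:RQ} may be convenient for producing the base-level pieces. The limit statement is then immediate:
$$\frac{\HJ(U_n)}{\HL(U_n)+\HR(U_n)-2}=\frac{2^{n+1}-4}{2(2^n+n-3)-2}=\frac{2^{n+1}-4}{2^{n+1}+2n-8}\xrightarrow[\ n\to\infty\ ]{}1.$$

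The main obstacle is the construction itself. One must design the ideal $I_n$ and the $U_n$-action so that all three internal heights of $I_n$ inside $U_{n+1}$ come out to exactly $2^n+2$, $2^n+2$, $2^{n+1}+1$, so that $I_n$ glues onto $U_n$ without introducing extra comparabilities, and — most delicately — so that the ideal-extension inequality~(\ref{eq}) and the socle bound of Theorem~\ref{thm:leftsocle}(3) are attained with equality rather than merely as inequalities. This last point is what forces $\HJ(U_n)$ to land exactly on $2^{n+1}-4$ rather than somewhere strictly below the general ceiling $\HL+\HR-2=2^{n+1}+2n-8$; verifying that the optimal $\l$-, $\r$- and $\j$-chains genuinely stack across the quotient is where the real work lies, the remainder being routine bookkeeping with Green's preorders.
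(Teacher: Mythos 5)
Your overall strategy (a recursive tower of ideal extensions, with the socle/ideal-extension machinery giving the upper bounds and explicit stacked chains the lower bounds) is genuinely different from the paper's, but the construction at its heart is internally inconsistent, so the induction cannot get off the ground. You ask for an ideal $I_n$ whose poset of $\l$-classes inside $U_{n+1}$ is a chain of height $2^n+2$ while its poset of $\j$-classes is a product grid of height $2^{n+1}+1$. This is impossible: since $\leql\,\subseteq\,\leqj$, every $\j$-class is a union of $\l$-classes and the map $L_a\mapsto J_a$ is order-preserving, so if the $\l$-classes of the elements of $I_n$ form a chain then so do their $\j$-classes, and there are at most $2^n+2$ of them; hence $\HJS(I_n)\leq 2^n+2<2^{n+1}+1$. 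Feeding that into (\ref{eq}) caps $\HJ(U_{n+1})$ at $(2^n+2)+(2^{n+1}-4)-1=3\cdot2^n-3$, which is strictly less than the target $2^{n+2}-4$ for all $n\geq1$, so the recursion cannot reach the claimed $\j$-heights. (The arithmetic of your target values $2^n+2$, $2^n+2$, $2^{n+1}+1$ is in fact the only one consistent with the chain-splitting argument of Theorem~\ref{thm:l,r,j}, but attaining it forces the $\l$- and $\r$-class posets of $I_n$ to be far from chains.) Beyond this, the proposal never actually produces the ideal $I_n$, the $U_n$-action on it, or even the base case; since the entire content of the theorem is the existence of these semigroups, and you yourself flag the construction as ``the main obstacle,'' this is a substantive gap rather than routine bookkeeping.

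For comparison, the paper avoids a recursion altogether: it takes the extremal semigroup $S$ from Theorem~\ref{thm:n,m} with $\HL(S)=n$ and $\HR(S)=\HJ(S)=|S|=2^n-1$ (arranged to have a left identity), its dual $T$ (with a right identity), and sets $U_n=(S\times T)/I$ where $I=(S\times\{0_T\})\cup(\{0_S\}\times T)$. The upper bounds $\HK(U_n)\leq\HK(S)+\HK(T)-2$ come from splitting any chain coordinatewise (the same idea as in Theorem~\ref{thm:l,r,j}), and the lower bounds come from explicitly concatenating a maximal chain in the first coordinate with one in the second, the identities $e$ and $f$ making the concatenation legitimate. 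In that construction the $\l$-, $\r$- and $\j$-class posets of the relevant pieces are products, not chains, which is exactly what your grid would have to be to avoid the contradiction above.
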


\begin{proof}
Recall that a {\em left identity} of a semigroup $S$ is an element $e\in S$ such that $es=s$ for all $s\in S.$  Right identities are defined dually.  Recalling Construction \ref{con:U}, observe that if $e$ is a left identity of $S$ then it is also a left identity of $\U(S).$

Let $n\in\N.$  By the above observation and the proof of Theorem \ref{thm:n,m}, there exists a semigroup $S$ with a left identity $e$ such that $\HL(S)=n$ and $\HR(S)=\HJ(S)=|S|=2^n-1.$  Dually, there exists a semigroup $T$ with a right identity $f$ such that $\HR(T)=n$ and $\HL(T)=\HJ(T)=|T|=2^n-1.$  Let $0_S$ and $0_T$ denote the zeros of $S$ and $T,$ respectively, and let $I=(S\times\{0_T\})\cup(\{0_S\}\times T).$  Clearly $I$ is an ideal of the direct product $S\times T.$  Now let $U\,(=U_n)=(S\times T)/I.$  

Consider a $\k$-chain $(u_1,\dots,u_m,0)$ in $U,$ where $u_i=(x_i,y_i).$  Then, for each $i\in\{1,\dots,m-1\},$ either $x_i\gk x_{i+1}$ in $S$ or $y_i\gk y_{i+1}$ in $T.$  It follows that there exist $k,l\leq m$ such that $m\leq k+l-1,$ there is an $\k$-chain $(x_{i_1},\dots,x_{i_k},0)$ where $1=i_1<\dots<i_k\leq m,$ and there is an $\k$-chain $(y_{j_1},\dots,y_{j_l},0)$ where $1=j_1<\dots<j_l\leq m.$  We must then have $k\leq\HK(S)-1$ and $l\leq\HK(T)-1,$ whence $m\leq k+l-1\leq\HK(S)+\HK(T)-3.$  It follows that $\HK(U)\leq\HK(S)+\HK(T)-2.$  Thus, we have $\HL(U)\leq2^n+n-3,$ $\HR(U)\leq2^n+n-3$ and $\HJ(U)\leq2^{n+1}-4.$

Note that, for any $\k\in\{\l,\r,\j\}$ and $(a,b),(c,d)\in U{\setminus}\{0\},$ if $(a,b)\,\k\,(c,d)$ then $a\,\k\,c$ in $S$ and $b\,\k\,d$ in $T.$

Now, let $(a_1,\dots,a_{n-1},0_S)$ be a maximal $\l$-chain in $S,$ and let $(b_1,\dots,b_{2^n-2},0_T)$ be {\em the} maximum $\l$-chain in $T.$  Observe that $b_1=f.$ 
For $i\in\{1,\dots,n-2\}$ let $s_i\in S$ be such that $a_{i+1}=s_ia_i$, and for $j\in\{1,\dots,2^n-3\}$ let $t_j\in T$ be such that $b_{j+1}=t_jb_j$.  Then, for each such $i$ and $j,$ we have $(a_{i+1},f)=(s_i,f)(a_i,f)$ and $(a_{n-1},b_{j+1})=(e,t_j)(a_{n-1},b_j).$  It follows that 
$$(a_1,f)\gl\cdots\gl(a_{n-1},f)=(a_{n-1},b_1)\gl\cdots\gl(a_{n-1},b_{2^n-2})\gl0,$$
so that $\HL(U)\geq2^n+n-3.$  Thus $\HL(U)=2^n+n-3.$  Similarly, we have $\HR(U)=2^n+n-3.$

Now let $(c_1,\dots,c_{2^n-2},0_S)$ be the maximum $\r$-chain in $S.$  Letting $s_i'\in S$ ($1\leq i\leq 2^n-3$) be such that $c_{i+1}=c_is_i'$, we have $(c_{i+1},f)=(c_i,f)(s_i',f).$  Also, with $b_j$ and $t_j$ as above, we have $(c_{2^n-2},b_{j+1})=(e,t_j)(c_{2^n-2},b_j).$  It follows that
$$(c_1,f)\gj\cdots\gj(c_{2^n-2},f)=(c_{2^n-2},b_1)\gj\cdots\gj(c_{2^n-2},b_{2^n-2})\gj0,$$
so that $\HJ(U)\geq2(2^n-2)=2^{n+1}-4.$  Thus $\HJ(U)=2^{n+1}-4.$  This completes the proof of the first part of the statement.  The second part now follows immediately.
\end{proof}

An immediate consequence of Theorem \ref{thm:asym} is that the upper bound of Theorem \ref{thm:l,r,j} is sharp in the case that the $\l$- and $\r$-heights are both 3: 
$\HL(U_2)=\HR(U_2)=3$ and $\HJ(U_2)=4\,(=3+3-2).$  In the proof of Theorem \ref{thm:asym} for the case $n=2,$ the semigroup $S$ must be isomorphic to $\U(\{e\}),$ and $T$ must be anti-isomorphic to $\U(\{e\}).$  Writing $S=\{e,x,0_S\}$ and $T=\{f,y,0_T\},$ and letting $a=(e,f),$ $b=(e,y),$ $c=(x,f)$ and $d=(x,y),$ we have $U_2=\{a,b,c,d,0\},$ and the posets of $\l$-, $\r$-, $\j$- and $\h$-classes of $U_2$ are as displayed in Figure \ref{fig1}.  We note that, since $\HH(U_2)=2,$ both the inequalities in Proposition \ref{prop:stable}(3) can be strict.  


\vspace{-0.5em}
\begin{center}
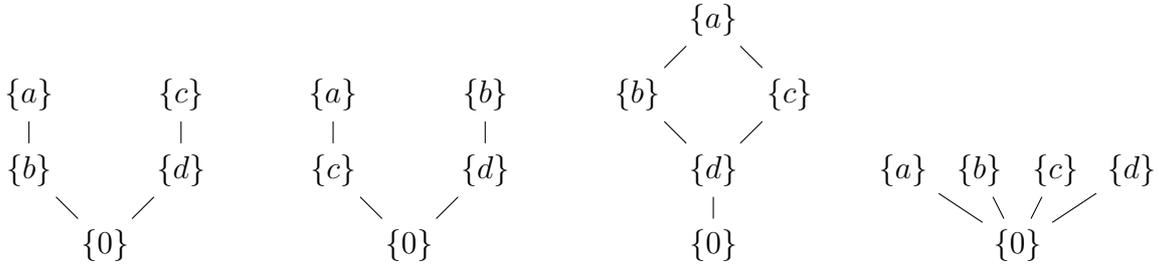
\begin{figure}[!h]
\begin{tikzpicture}[scale=0.5]
\node (a) at (-2,4) {$\{a\}$};
\node (c) at (2,4) {$\{c\}$};
\node (b) at (-2,2) {$\{b\}$};
\node (d) at (2,2) {$\{d\}$};
\node (zero) at (0,0) {$\{0\}$};
\draw (a) -- (b) -- (zero) -- (d) -- (c);
\node (a) at (6,4) {$\{a\}$};
\node (b) at (10,4) {$\{b\}$};
\node (c) at (6,2) {$\{c\}$};
\node (d) at (10,2) {$\{d\}$};
\node (zero) at (8,0) {$\{0\}$};
\draw (a) -- (c) -- (zero) -- (d) -- (b);
\node (a) at (16,6) {$\{a\}$};
\node (b) at (14,4) {$\{b\}$};
\node (c) at (18,4) {$\{c\}$};
\node (d) at (16,2) {$\{d\}$};
\node (zero) at (16,0) {$\{0\}$};
\draw (zero) -- (d) -- (b) -- (a) -- (c) -- (d);
\node (a) at (21,2) {$\{a\}$};
\node (b) at (23,2) {$\{b\}$};
\node (c) at (25,2) {$\{c\}$};
\node (d) at (27,2) {$\{d\}$};
\node (zero) at (24,0) {$\{0\}$};
\draw (a) -- (zero) -- (b) 
(c) -- (zero) -- (d);
\end{tikzpicture}
\vspace{-0.5em}
\caption{For the semigroup $U_2$, the posets of $\l$-classes (left), $\r$-classes (middle left), $\j$-classes (middle right) and $\h$-classes (right).}
\label{fig1}
\end{figure}
\vspace{-1.5em}
\end{center}

%
%

\section{Semisimple and Regular Semigroups}\label{sec:ss}

A crucial reason for the disparities between certain $\k$-heights of the semigroups of Example \ref{ex:h=2} and Theorems \ref{thm:n,m} and \ref{thm:asym} is the presence of $\j$-classes $J$ such that $J^2\cap J=\emptyset.$  In this section we shall see that for stable semigroups without such $\j$-classes all the four $\k$-heights coincide.

The {\em principal factors} of (the $\j$-classes of) a semigroup $S$ are defined as follows.  If $S$ has a minimal $\j$-class (i.e.\ a minimal ideal), then this $\j$-class is defined to be its own principal factor.  For any non-minimal $\j$-class $J$ of $S,$ the principal factor of $J$ is the Rees quotient of the subsemigroup $S^1JS^1$ of $S$ by its ideal $S^1JS^1{\setminus}J.$  Observe that this principal factor has universe $J\cup\{0\},$ where $0$ denotes the zero element.

Recall that the minimal ideal of a semigroup is simple.  All other principal factors are either $0$-simple or null \cite[Lemma 2.39]{Clifford:1961}.  A semigroup is {\em semisimple} if all its principal factors are either simple or 0-simple.

\begin{prop}\label{prop:semisimple}
Let $S$ be a semisimple semigroup.  Then
$$\HJ(S)\leq\min\big(\HL(S),\HR(S)\big).$$
\end{prop}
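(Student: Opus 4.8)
The plan is to show that in a semisimple semigroup every $\j$-chain can be refined into an $\l$-chain (and dually into an $\r$-chain), which immediately gives $\HJ(S)\le\HL(S)$ and $\HJ(S)\le\HR(S)$. The point is that semisimplicity removes the obstruction seen in Example~\ref{ex:h=2}: if $J$ is a $\j$-class with $J^2\cap J=\emptyset$ then one can drop out of $J$ without decreasing in the $\l$-order, but in a semisimple semigroup this cannot happen. Concretely, first I would record the standard fact that in a semisimple semigroup every $\j$-class $J$ satisfies $J\subseteq J^2$ (equivalently $J^2\cap J\ne\emptyset$), since the principal factor of $J$ is simple or $0$-simple and in either case is idempotent-generated enough that $J=JJ$ inside the principal factor; this is where the hypothesis is used.

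Next I would take a $\j$-chain $(a_1,\dots,a_m)$ in $S$ with $m=\HJ(S)$ (assuming $\HL(S)$ finite, else there is nothing to prove). For each consecutive pair $a_i\gj a_{i+1}$, write $a_{i+1}=sa_it$ with $s,t\in S^1$. The key local claim is: one can replace $a_i$ by an element $a_i'$ with $a_i'\,\j\,a_i$ and $a_i'\geql a_{i+1}$ and still $a_i'\gj a_{i+1}$, so that after doing this along the chain we get a genuine $\l$-chain of the same length. To produce $a_i'$ I would use the fact that $a_i\in J_i^2$ to factor $a_i=uv$ with $u,v\in J_i$ (the $\j$-class of $a_i$); combining with $a_{i+1}=sa_it=s u v t$ one gets $a_{i+1}\leql vt$, and $vt\leqj v\,\j\,a_i$, hence $vt\,\j\,a_i$ or $vt\lj a_i$ — the latter is impossible since $a_{i+1}\leqj vt$ would then contradict $a_{i+1}\lj a_i$ only if... here I would need to be slightly careful, so let me instead argue with the whole chain at once: set $a_i'=v_{i}t_{i-1}\cdots t_1\cdot(\text{correction})$, i.e. carry the "right part" of the factorisations down the chain as in the proofs of Theorem~\ref{thm:leftsocle}(3) and Theorem~\ref{thm:l,r,j}. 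That is, writing $a_{i+1}=s_i a_i t_i$ and using $a_1=u_1 v_1$ with $u_1,v_1\in J_1$, set $c_1=v_1$ and $c_{i+1}=s_i a_i t_i$ rewritten via the same trick; the outcome should be a chain $c_1\geql c_2\geql\cdots$ with each $c_i\,\j\,a_i$, forcing strictness of each $\geql$ from the strictness of the $\gj$.

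The main obstacle I anticipate is making the reduction step genuinely produce an $\l$-chain of length exactly $m$ rather than merely bounding things: one must verify both that the constructed elements $c_i$ stay $\j$-equivalent to the $a_i$ (so no "collapse" occurs that would shorten the chain) and that $c_i\,\l\,c_{i+1}$ is impossible (which, combined with $c_i\geql c_{i+1}$, gives $c_i\gl c_{i+1}$). For the latter I would mimic the argument in Theorem~\ref{thm:leftsocle}(3): if $c_i\,\l\,c_{i+1}$ held, then pulling the left factors back up would yield $a_i\leqj a_{i+1}$, contradicting $a_i\gj a_{i+1}$. For the former, $\j$-equivalence is preserved because $c_i$ differs from $a_i$ only by multiplying by elements of the (simple or $0$-simple, hence "homogeneous") principal factor on one side, so it stays inside the same $\j$-class. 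Once the $\l$-version is done, the $\r$-version is exactly dual, and taking the minimum finishes the proof.
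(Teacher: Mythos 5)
Your overall strategy is the right one and matches the paper's: use semisimplicity to get $J\subseteq J^2$ for every $\j$-class $J$, and then refine a $\j$-chain $(a_0,\dots,a_n)$ into an $\l$-chain of the same length by producing elements $c_i$ with $c_0\geql c_1\geql\cdots$ and $c_i\,\j\,a_i$ (strictness is then automatic, so your separate argument for it is not needed). The gap is that the refinement is never actually constructed. Your ``local claim'' fails for exactly the reason you half-notice: from $a_{i+1}=s(uv)t$ with $u,v\in J_i$ you only get $a_{i+1}\leqj vt\leqj a_i$, which sandwiches $vt$ between $J_{i+1}$ and $J_i$ in the $\j$-order and does not pin down its $\j$-class. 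Your global fallback is a placeholder formula (it literally contains ``(correction)''), and it accumulates factors on the \emph{right}, which produces a $\leqr$-decreasing sequence, not a $\leql$-decreasing one. Finally, the justification you offer for $c_i\,\j\,a_i$ --- that multiplying by elements of the principal factor ``on one side'' keeps you in the same $\j$-class --- is false in general: products of elements of a ($0$-)simple $\j$-class frequently fall out of that class (that is the whole point of the $0$ in the principal factor), so this step cannot be waved through.

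What is missing is the sandwiching device that makes the construction work. Since $J_i\subseteq J_i^2$ implies $J_i\subseteq J_i^3$, one can write $a_i=x_ib_iy_i$ with $x_i,b_i,y_i\in J_i$, and since $J_i\subseteq S^1a_{i-1}S^1$ one can further write $b_i=s_ia_{i-1}t_i$; setting $u_i=x_is_i$ and $v_i=t_iy_i$ gives $a_i=u_ia_{i-1}v_i$, hence $a_i=u_i\cdots u_1a_0v_1\cdots v_i$. The elements $c_i=u_i\cdots u_1a_0$ form a $\leql$-decreasing sequence, and the point of putting $x_i\in J_i$ on the far left is that
$$x_i\geqj c_i\geqj c_iv_1\cdots v_i=a_i,$$
so $c_i$ is $\j$-sandwiched between two elements of $J_i$ and therefore lies in $J_i$. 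This is the step your proposal needs and does not supply; the $\r$-bound is the mirror image using the $v_i$. I would encourage you to rewrite the argument with this factorisation made explicit.
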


\begin{proof}
Consider a $\j$-chain $(a_0,a_1,\dots,a_n)$ in $S.$  Let $J_i$ denote the $\j$-class of $a_i$ ($i\in\{0,\dots,n\}$).  Since $J_i=J_i^2=J_i^3$, and $J_i\subseteq S^1a_{i-1}S^1$, there exist $x_i,y_i\in J_i$ and $s_i,t_i\in S^1$ such that $a_i=(x_is_i)a_{i-1}(t_iy_i).$  Put $u_i=x_is_i$ and $v_i=t_iy_i$.  We certainly have $u_i\dots u_1a_0\geql u_{i+1}\dots u_1a_0$ for each $i\in\{1,\dots,n-1\}.$  Also, for each $i\in\{1,\dots,n\},$ we have
$$x_i\geqj u_i\dots u_1a_0\geqj u_i\dots u_1a_0v_1\dots v_i=a_i,$$
implying that $u_i\dots u_1a_0\in J_i$ (since $x_i,a_i\in J_i$).  It follows that
$$a_0\gl u_1a_0\gl u_2u_1a_0\gl\cdots\gl u_n\dots u_1a_0,$$
which is an $\l$-chain of length $n+1$.
We conclude that $\HL(S)\geq\HJ(S).$  A similar argument using the $v_i$ proves that $\HR(S)\geq\HJ(S).$
\end{proof}

Given a semigroup $S,$ there is a natural partial order on the set $E=E(S)$ of idempotents of $S$ given by $e\geq f$ if and only if $ef=fe=f.$
We denote the height of the poset $(E,\leq)$ by $H_E(S).$  

It is straightforward to show that, for $e,f\in E,$ we have
$$e\geql f\Leftrightarrow fe=f\qquad\text{and}\qquad e\geqr f\Leftrightarrow ef=f,$$ from which it follows that [$e\geq f\Leftrightarrow e\geqh f$] and [$e>f\Leftrightarrow e\gl f\text{ and }e\gr f$].  
Moreover, we have [$e\gl f\Leftrightarrow e>ef$ and $e\geql f$].  Indeed, suppose that $e\gl f.$  Then certainly $e\geql f,$ and we have $(ef)e=e(fe)=ef=e(ef),$ so $e\geq ef.$  Since $e\not\leql f,$ it follows that $e\neq ef$ and hence $e>ef.$  Conversely, if $e>ef$ and $e\geql f,$ then we cannot have $e\,\l\,f,$ for that would imply that $e=ef,$ so we must have $e\gl f.$

From the above discussion we deduce:



\begin{lem}\label{lem:E}
Let $S$ be a semigroup, let $E$ denote the set of idempotents of $S,$ and let $n\in\N.$  Then the following are equivalent:
\begin{enumerate}
\item there exists an $\l$-chain of idempotents of length $n$ in $S$;
\item there exists an $\r$-chain of idempotents of length $n$ in $S$;
\item there exists an $\h$-chain of idempotents of length $n$ in $S$;
\item there exists a chain of idempotents of length $n$ in $(E,\leq).$
\end{enumerate}
Consequently, we have $H_E(S)\leq\min(\HL(S),\HR(S),\HH(S)).$
\end{lem}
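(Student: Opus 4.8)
The plan is to prove the equivalences by establishing the cycle $(4)\Rightarrow(3)\Rightarrow(1)\Rightarrow(4)$ together with $(3)\Rightarrow(2)\Rightarrow(4)$, after which the final inequality drops out immediately. All of the implications except $(1)\Rightarrow(4)$ and its dual $(2)\Rightarrow(4)$ are essentially bookkeeping on top of the identities collected in the discussion preceding the lemma; the one step with genuine content is building a chain in $(E,\leq)$ out of an $\l$-chain of idempotents.

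First I would record the easy directions. Since any $\h$-class contains at most one idempotent, for $e,f\in E$ we have $e\,\h\,f$ if and only if $e=f$; hence $e\gh f$ is equivalent to ``$e\geqh f$ and $e\neq f$'', which by $[e\geq f\Leftrightarrow e\geqh f]$ is exactly $e>f$. So an $\h$-chain of idempotents of length $n$ is literally the same data as a chain of length $n$ in $(E,\leq)$, giving $(3)\Leftrightarrow(4)$. For $(3)\Rightarrow(1)$ and $(3)\Rightarrow(2)$, if $e\gh f$ then $e>f$, and $[e>f\Leftrightarrow e\gl f\text{ and }e\gr f]$ shows that an $\h$-chain of idempotents is simultaneously an $\l$-chain and an $\r$-chain of idempotents.

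For $(1)\Rightarrow(4)$, take an $\l$-chain of idempotents $e_1\gl e_2\gl\cdots\gl e_n$. The one fact I would use repeatedly is that $e_je_i=e_j$ whenever $i<j$: indeed $e_i\geql e_j$, and by the discussion $e_i\geql e_j\Leftrightarrow e_je_i=e_j$. Set $g_i=e_1e_2\cdots e_i$. Using the displayed fact one checks quickly that (i) each $g_i$ is idempotent; (ii) $g_i\,\l\,e_i$, by induction on $i$ via $g_i=g_{i-1}e_i$, $g_{i-1}\geql e_{i-1}\geql e_i$, and the idempotency of $e_i$ (which turns $e_i=sg_{i-1}$ into $e_i=e_i^2=sg_i$); and (iii) $g_i>g_{i+1}$ in $(E,\leq)$, since $g_ig_{i+1}=g_{i+1}=g_{i+1}g_i$ follows from $g_i$ being idempotent and $e_{i+1}g_i=e_{i+1}$, while $g_i=g_{i+1}$ would force $e_i\,\l\,g_i=g_{i+1}\,\l\,e_{i+1}$, contradicting $e_i\gl e_{i+1}$. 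Thus $g_1>g_2>\cdots>g_n$ is a chain of length $n$ in $(E,\leq)$. The left--right dual (taking $g_i=e_ie_{i-1}\cdots e_1$) gives $(2)\Rightarrow(4)$, closing both cycles.

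Finally, the consequence is immediate: if $(E,\leq)$ admits a chain of length $n$, then by $(4)\Rightarrow(1)$ there is an $\l$-chain of idempotents of length $n$, hence an $\l$-chain of $S$ of length $n$, so $\HL(S)\geq n$; taking $n=H_E(S)$ (and interpreting this as ``$\HL(S)$ is infinite'' when $H_E(S)$ is infinite) gives $\HL(S)\geq H_E(S)$, and symmetrically for $\r$ and $\h$. I expect the only real obstacle to be spotting the auxiliary idempotents $g_i=e_1\cdots e_i$; once they are written down, every verification is a one-line computation powered by the single identity $e_je_i=e_j$ ($i<j$).
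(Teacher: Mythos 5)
Your proof is correct and follows essentially the route the paper intends: the lemma is stated there as a direct consequence of the preceding facts about the idempotent order, and your auxiliary idempotents $g_i=e_1\cdots e_i$ are exactly what one gets by iterating the paper's observation that $e\gl f$ implies $e>ef$ with $ef\,\l\,f$. All verifications check out, including the handling of the infinite case in the final inequality.
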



%
%

A semigroup $S$ is said to be {\em regular} if for every $a\in S$ there exists some $b\in S$ such that $a=aba$ and $b=bab$; the element $b$ is called an {\em inverse} of $a.$  A semigroup is {\em inverse} if each of its elements has a unique inverse. 

A semigroup $S$ is regular (resp.\ inverse) if and only if every $\l$-class and every $\r$-class of $S$ contain at least (resp.\ exactly) one idempotent \cite[Theorem~6]{Green}.  It follows that for any regular semigroup $S$ we have $H_E(S)\geq\max(\HL(S),\HR(S).$  This fact, together with Corollary \ref{cor:H}, Proposition \ref{prop:semisimple} and Lemma \ref{lem:E}, yields:

\begin{prop}\label{prop:reg}
If $S$ is a regular semigroup, then 
$$\HL(S)=\HR(S)=\HH(S)=H_E(S)\geq\HJ(S).$$
\end{prop}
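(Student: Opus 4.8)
The plan is to assemble results already established: Proposition~\ref{prop:semisimple}, Lemma~\ref{lem:E}, Corollary~\ref{cor:H}, Lemma~\ref{lem:stable}, and the observation recorded just above the statement that $H_E(S)\geq\max(\HL(S),\HR(S))$ whenever $S$ is regular. The only extra input needed is that regular semigroups are semisimple, which is elementary.

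First I would deal with the part $\HJ(S)\leq\HL(S)\,(=\HR(S))$. By Proposition~\ref{prop:semisimple} it is enough to check that a regular semigroup $S$ is semisimple, i.e.\ that no principal factor of $S$ is null. Since in a regular semigroup every $\l$-class contains an idempotent, every $\j$-class $J$ contains an idempotent $e$; as $e=e^2$ lies in the principal factor of $J$, that principal factor is not null (and the principal factor of the minimal $\j$-class, if it exists, is simple), so every principal factor is simple or $0$-simple. Hence Proposition~\ref{prop:semisimple} gives $\HJ(S)\leq\min(\HL(S),\HR(S))$, which will complete the last inequality once the three one-sided heights are shown to coincide.

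Next I would prove $\HL(S)=\HR(S)=H_E(S)$ and $H_E(S)\leq\HH(S)$ by a sandwich argument. For the lower bound: given an $\l$-chain of length $n$ in $S$, replace each term by an idempotent lying in its $\l$-class (possible since $S$ is regular); by the equivalence in Lemma~\ref{lem:E} this produces a chain of length $n$ in $(E,\leq)$, so $H_E(S)\geq\HL(S)$, and dually $H_E(S)\geq\HR(S)$ (this is exactly the observation preceding the statement). For the upper bound, Lemma~\ref{lem:E} gives $H_E(S)\leq\min(\HL(S),\HR(S),\HH(S))$. Combining the two yields $\max(\HL(S),\HR(S))\leq\min(\HL(S),\HR(S))$, forcing $\HL(S)=\HR(S)=H_E(S)$, and simultaneously $H_E(S)\leq\HH(S)$.

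Finally I would establish the reverse inequality $\HH(S)\leq H_E(S)$. If $H_E(S)=\infty$ this is immediate. If $H_E(S)<\infty$, then $\HL(S)=\HR(S)<\infty$, so $S$ is stable by Lemma~\ref{lem:stable}; then Proposition~\ref{prop:stable}(3) (equivalently, Corollary~\ref{cor:H}) gives $\HH(S)\leq\min(\HL(S),\HR(S))=H_E(S)$. Thus $\HL(S)=\HR(S)=\HH(S)=H_E(S)$, which together with the first step completes the proof. There is no real obstacle here: the argument is a bookkeeping exercise with earlier results, the only two points needing a word of care being the (standard) fact that regularity forces semisimplicity, and the appeal to stability to rule out the a priori possibility that $\HH(S)=\infty$ while the other three heights are finite.
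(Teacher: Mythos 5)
Your proposal is correct and follows essentially the same route as the paper: the paper likewise combines the observation that $H_E(S)\geq\max\big(\HL(S),\HR(S)\big)$ for regular $S$ with Lemma~\ref{lem:E}, Corollary~\ref{cor:H} and Proposition~\ref{prop:semisimple}. The two details you spell out (that regularity forces semisimplicity, and the stability argument ruling out $\HH(S)=\infty$ when the other heights are finite) are exactly the ones the paper leaves implicit.
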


The following example shows that the inequality in Proposition \ref{prop:reg} can be strict, even for inverse semigroups.

\begin{ex}
The {\em bicyclic monoid}, denoted by $B,$ is the monoid defined by the presentation $\langle a,b\,|\,ab=1\rangle.$  The bicyclic monoid is inverse and simple (so $\HJ(B)=1$), but its set $E$ of idempotents forms an infinite chain $1>ba>b^2a^2>\cdots$ \cite[Theorem 2.53]{Clifford:1961}, so $H_E(B)\,(=\HL(B)=\HR(B)=\HH(B))$ is infinite.
\end{ex}


We now collect some equivalent characterisations for a semigroup to be both regular and stable, and deduce that for such a semigroup all the $\k$-heights coincide.  A semigroup is said to be {\em completely semisimple} if each of its principal factors is either completely simple or completely 0-simple.

\begin{prop}\label{prop:reg,stable}
For a semigroup $S,$ the following are equivalent:
\begin{enumerate}
\item $S$ is regular and stable;
\item $S$ is regular and either left stable or right stable;
\item $S$ is completely semisimple;
\item $S$ is semisimple and stable;
\item $S$ is regular and does not contain a copy of the bicyclic monoid.
\end{enumerate}
Moreover, if any (and hence all) of the conditions (1)-(5) hold, then 
$$\HL(S)=\HR(S)=\HH(S)=H_E(S)=\HJ(S).$$
\end{prop}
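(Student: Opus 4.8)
The plan is to establish the chain of equivalences (1)--(5) by exploiting the structural theory of principal factors, and then to derive the equality of all heights from Proposition~\ref{prop:reg} together with Proposition~\ref{prop:semisimple}. First I would prove (1)$\Leftrightarrow$(3): a regular semigroup $S$ is automatically semisimple, since each principal factor of a regular semigroup is regular and a regular $0$-simple (resp.\ simple) semigroup is $0$-simple (resp.\ simple); then stability of $S$ passes to each principal factor, and a stable $0$-simple semigroup is completely $0$-simple while a stable simple semigroup is completely simple by \cite[Proposition 15]{East:2020}, giving complete semisimplicity. Conversely, if $S$ is completely semisimple then each principal factor is (completely) $0$-simple or simple, hence regular, and regularity of all principal factors implies regularity of $S$ (an element of a non-minimal $\j$-class $J$ has $J^2\cap J=J$, so it is regular within its principal factor, hence within $S$); stability is recovered because it is a ``local'' condition on $\j$-classes that complete $0$-simplicity/simplicity guarantees. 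The implication (1)$\Rightarrow$(2) is trivial, and for (2)$\Rightarrow$(1) I would use that a regular, left (or right) stable semigroup is already stable --- this is the regular-semigroup fact that one-sided stability forces two-sided stability, which can be seen via the idempotent description of $\leql,\leqr$ and Green's lemmas, or cited if available.

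For (1)$\Leftrightarrow$(4): (1)$\Rightarrow$(4) is immediate since regular implies semisimple. For (4)$\Rightarrow$(1), given $S$ semisimple and stable, each principal factor is simple or $0$-simple, and stability of $S$ descends to each principal factor; a stable simple semigroup is completely simple and a stable $0$-simple semigroup is completely $0$-simple, both of which are regular, so $S$ is regular. For (1)$\Leftrightarrow$(5): the bicyclic monoid is not stable (its infinite chain of idempotents witnesses that $e\,\j\,f$ with $e>f$ in the idempotent order, violating stability), so a stable semigroup cannot contain a copy of it; conversely, by a classical theorem (which I would cite, e.g.\ from \cite{Clifford:1967} or \cite{Higgins}), a regular semigroup is completely semisimple if and only if it contains no copy of the bicyclic monoid. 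The main obstacle I anticipate is assembling the right references for these structural facts about $0$-simple and completely $0$-simple semigroups and for the bicyclic-monoid characterisation of complete semisimplicity --- the logical skeleton is routine once those are in hand, but getting the citations precisely right (and checking that ``stable'' and ``completely semisimple'' interact as claimed at the level of principal factors, since a principal factor of a stable semigroup need not obviously be stable and may require a direct verification) is the delicate part.

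For the final equality of heights, suppose (1)--(5) hold. By Proposition~\ref{prop:reg} we already have $\HL(S)=\HR(S)=\HH(S)=H_E(S)\geq\HJ(S)$, using only regularity. Since $S$ is also semisimple, Proposition~\ref{prop:semisimple} gives $\HJ(S)\leq\min(\HL(S),\HR(S))$, which is the same inequality and so adds nothing by itself; the extra input is stability, which via Proposition~\ref{prop:stable}(3) yields $\max(\HL(S),\HR(S))\leq\HJ(S)$ (when these are finite) and hence $\HJ(S)\geq\HL(S)=\HR(S)$. Combining with the reverse inequality from Proposition~\ref{prop:reg} gives $\HJ(S)=\HL(S)=\HR(S)=\HH(S)=H_E(S)$. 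In the case where the heights are infinite, Proposition~\ref{prop:semisimple} still forces $\HL(S)=\HR(S)=\HH(S)$ infinite $\Rightarrow$ $\HJ(S)$ infinite as well (an infinite $\l$-chain produces an infinite $\j$-chain by the argument in Proposition~\ref{prop:semisimple}, which does not require finiteness), so the equalities hold in the extended sense too. I would phrase the final paragraph to cover both the finite and infinite cases uniformly by appealing directly to the chain-level statements of Propositions~\ref{prop:stable}(3), \ref{prop:semisimple} and \ref{prop:reg} rather than only their height inequalities.
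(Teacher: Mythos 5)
Your proposal is correct in substance but is organised quite differently from the paper's proof. The paper treats this proposition largely as an assembly of citations: the equivalence of (2), (3) and (4) is quoted from Clifford--Preston (Theorems 6.45 and 6.48), (1)$\Rightarrow$(5) from Anderson--Hunter--Koch, and the only implications argued directly are the easy (1)$\Rightarrow$(2), (3)$\Rightarrow$(1) (completely $(0$-$)$simple semigroups are regular and stable), and (5)$\Rightarrow$(3) via the classical fact that a $0$-simple semigroup with an idempotent and no bicyclic subsemigroup is completely $0$-simple. You instead make (1) the hub and argue each biconditional through principal factors, which is more self-contained and makes visible exactly where each hypothesis is used; your direct idempotent argument that a copy of the bicyclic monoid obstructs stability (distinct comparable idempotents $e>f$ with $e\,\j\,f$ would be forced to be $\h$-related) is a nice replacement for the Anderson citation, and it correctly avoids any claim that stability passes to subsemigroups. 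The price of your route is that two of its steps are exactly the nontrivial classical facts the paper outsources: first, (2)$\Rightarrow$(1), i.e.\ that a regular, one-sidedly stable semigroup is stable, is Clifford--Preston 6.45/6.48 territory and your suggested derivation ``via the idempotent description and Green's lemmas'' is not routine enough to leave as a remark --- you should cite it as the paper does; second, your repeated use of ``stability descends to principal factors'' needs the standard (but not free) fact that on a non-null principal factor $J^0$ the relations $\l,\r,\j$ restricted to $J$ agree with those of $S$, which you rightly flag as the delicate point. Your height argument at the end is the same as the paper's (Propositions \ref{prop:stable}(3) and \ref{prop:reg}), with one small misattribution: the implication ``$\HL(S)$ infinite $\Rightarrow\HJ(S)$ infinite'' comes from the chain-level statement of Proposition \ref{prop:stable}(3) (every $\l$-chain is a $\j$-chain), not from Proposition \ref{prop:semisimple}, which gives the reverse comparison; since you ultimately invoke both at chain level, this is harmless.
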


\begin{proof}
The final part of the statement follows from Propositions \ref{prop:stable}(3) and \ref{prop:reg}, so we only need to verify that (1)-(5) are equivalent.

First we remark that (2), (3) and (4) are equivalent by \cite[Theorems 6.45 and 6.48]{Clifford:1967}, and that (1)$\Rightarrow$(5) follows from \cite[Corollary 2.2]{Anderson}. 

Next we note that (1)$\Rightarrow$(2) is obvious, and that (3)$\Rightarrow$(1) follows from the fact that completely (0-)simple semigroups are regular and stable.
It remains to show that (5)$\Rightarrow$(3), so suppose that (5) holds.  Then each principal factor of $S$ contains an idempotent (since $S$ is regular) but no copy of the bicyclic monoid, so is either completely simple or completely 0-simple by \cite[Theorem 2.54]{Clifford:1961}.  Thus (3) holds, and the proof is complete.
\end{proof}




By Lemma \ref{lem:stable} and Proposition \ref{prop:reg,stable}, we have:

\begin{cor}\label{cor:reg}
If $S$ is a regular semigroup with finite $\j$-height, then
$$\HL(S)=\HR(S)=\HH(S)=H_E(S)=\HJ(S)\,\Longleftrightarrow\,S\text{ is stable}.$$
\end{cor}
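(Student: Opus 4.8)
The statement to prove is Corollary~\ref{cor:reg}: for a regular semigroup $S$ with finite $\j$-height, the five heights $\HL(S),\HR(S),\HH(S),H_E(S),\HJ(S)$ all coincide if and only if $S$ is stable.

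The plan is to deduce this almost immediately from the two results the corollary explicitly cites, namely Lemma~\ref{lem:stable} and Proposition~\ref{prop:reg,stable}, so the bulk of the work is simply assembling them correctly. First I would handle the ``if'' direction: suppose $S$ is regular and stable. Then condition (1) of Proposition~\ref{prop:reg,stable} holds, so by the ``moreover'' clause of that proposition all five heights are equal. No finiteness hypothesis is even needed here.

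For the ``only if'' direction, suppose $S$ is a regular semigroup with $\HJ(S)<\infty$ and suppose the five heights coincide. In particular $\HL(S)=\HJ(S)<\infty$, so $\HL(S)$ is finite, and hence $S$ is left stable by Lemma~\ref{lem:stable}. But $S$ is regular and left stable, so condition (2) of Proposition~\ref{prop:reg,stable} holds, which is equivalent to condition (1), i.e.\ $S$ is (regular and) stable. Thus stability follows. (Alternatively: $\HR(S)=\HJ(S)<\infty$ gives right stability via Lemma~\ref{lem:stable}, which also suffices for (2) of Proposition~\ref{prop:reg,stable}.)

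There is no real obstacle here; the only thing to be careful about is the logical direction of the biconditional and which height equalities one uses to trigger Lemma~\ref{lem:stable}. Below is the proof.

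\begin{proof}
($\Leftarrow$) Suppose $S$ is regular and stable.  Then condition (1) of Proposition~\ref{prop:reg,stable} holds, so by the final part of that proposition we have
$$\HL(S)=\HR(S)=\HH(S)=H_E(S)=\HJ(S).$$

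($\Rightarrow$) Suppose $S$ is a regular semigroup with $\HJ(S)<\infty$, and suppose that the five heights coincide.  Then in particular $\HL(S)=\HJ(S)<\infty$, so $\HL(S)$ is finite, and hence $S$ is left stable by Lemma~\ref{lem:stable}.  Thus $S$ is regular and left stable, so condition (2) of Proposition~\ref{prop:reg,stable} holds.  Since conditions (1) and (2) of that proposition are equivalent, $S$ is stable, as required.
\end{proof}
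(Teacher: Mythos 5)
Your proof is correct and matches the paper's intended argument exactly: the paper derives the corollary directly from Lemma~\ref{lem:stable} and Proposition~\ref{prop:reg,stable}, which is precisely what you do (the forward direction could be shortened slightly by noting that $\HL(S)$ and $\HR(S)$ both finite already gives stability by the second sentence of Lemma~\ref{lem:stable}, but your route through left stability and the equivalence of (1) and (2) in Proposition~\ref{prop:reg,stable} is equally valid).
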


\vspace{0.5em}
\section*{Acknowledgements}
This work was supported by the Engineering and Physical Sciences Research Council [EP/V002953/1, EP/V003224/1].  The authors thank the referee for helpful comments.

\vspace{0.5em}

\end{document}